\documentclass[reqno]{amsart}
\usepackage{amsmath, amsthm, amscd, amssymb, amsfonts, amsbsy}
\usepackage{latexsym, color, enumerate}
\usepackage{mathrsfs}
\usepackage{pxfonts}
\usepackage{bbm}
\usepackage{mathtools}
\usepackage[dvipsnames]{xcolor}
\usepackage{tikz}
\usepackage{enumerate}
\usepackage{todonotes}
\usepackage{marginnote}

\theoremstyle{plain}
\newtheorem{theorem}[equation]{Theorem}
\newtheorem{lemma}[equation]{Lemma}

\theoremstyle{definition}

\theoremstyle{remark}

\DeclareMathOperator{\sgn}{sgn}
\DeclareMathOperator{\ran}{ran}

\numberwithin{equation}{section}

\providecommand{\ip}[1]{\langle#1\rangle}

\providecommand{\abs}[1]{\lvert#1\rvert}
\providecommand{\Abs}[1]{\left\lvert#1\right\rvert}

\providecommand{\norm}[1]{\lVert#1\rVert}

\renewcommand{\vec}[1]{\boldsymbol{#1}}

\newcommand{\opH}{\mathsf{H}}
\newcommand{\opI}{\mathsf{I}}

\newcommand{\opM}{\mathsf{M}}
\newcommand{\opP}{\mathsf{P}}
\newcommand{\opR}{\mathsf{R}}
\newcommand{\opS}{\mathsf{S}}

\begin{document}
\title[Sign-changing Poisson kernel]
{A sign-changing Poisson kernel for a non-symmetric elliptic operator in a bounded domain}

\author[S. Kim]{Seick Kim}

\address[S. Kim]{Department of Mathematics, Yonsei University,
50 Yonsei-Ro, Seodaemun-gu, Seoul 03722, Republic of Korea}

\address[S. Kim]{Intelligent Computational Science Institute (IN2CSI),
Yonsei University, Seoul 03722, Republic of Korea}

\email{kimseick@yonsei.ac.kr}

\subjclass[2020]{Primary 35J25; Secondary 35R05, 42B20}

\keywords{Elliptic equations, non-symmetric coefficients, discontinuous coefficients, Dirichlet problem, Poisson kernel, first-order systems}

\begin{abstract}
We construct an explicit sign-changing Poisson kernel for a uniformly elliptic divergence form operator in the unit disk.
The coefficient matrix is non-symmetric, and its skew-symmetric part has a jump discontinuity across a fixed diameter, with the size of the jump determined by $k\in\mathbb R$.
For each $1<p<\infty$, the associated $L^p$ Dirichlet problem undergoes a sharp transition as $k$ crosses a $p$-dependent threshold.
In the unique solvability regime, the solution operator preserves positivity and is represented by a nonnegative Poisson kernel.
Beyond the threshold, the homogeneous problem admits a nontrivial solution, and for real-valued boundary data, every bounded linear selection from this nonunique solution family is represented by a sign-changing kernel.
Thus, even in bounded domains, uniform ellipticity and the maximum principle in the energy class do not prevent Poisson kernels from changing sign.
Using a Riemann--Hilbert formulation, we obtain explicit solution formulas and show that the factorization index governs both nonuniqueness and sign change.
To our knowledge, this is the first explicit example of this counterintuitive phenomenon in a bounded domain.
\end{abstract}

\maketitle

\section{Introduction}
The solvability of boundary value problems for second-order elliptic equations in divergence form,
\[
Lu=\operatorname{div}(\mathbf A\nabla u)=0,
\]
is a central topic in the theory of elliptic partial differential equations.
For real, bounded, uniformly elliptic coefficients, the foundational interior theory was established by De Giorgi \cite{DeGiorgi1957}, Nash \cite{Nash1958}, and Moser \cite{Moser1961}.
In the symmetric divergence-form setting, Littman, Stampacchia, and Weinberger \cite{LSW63} studied the associated Green's function and showed, in particular, that the regular boundary points for such operators are exactly those for the Laplacian.
These results brought the Dirichlet problem for elliptic equations with rough coefficients into close analogy with the classical theory and led naturally to the study of the associated $L$-harmonic measure; see, for instance, \cite{CFMS}.

Boundary value problems require information beyond interior regularity.
The modern $L^p$ theory was initiated by Dahlberg \cite{Dahlberg1977,Dahlberg1979} and by Jerison and Kenig \cite{JK1981} for harmonic functions in Lipschitz domains, using non-tangential maximal estimates and almost everywhere non-tangential convergence.
It was later extended to elliptic equations, particularly those with symmetric coefficients independent of the transversal variable; see, for example, \cite{KP1993,Kenig1994}.

The situation becomes substantially more delicate for non-symmetric coefficients.
Nonetheless, the De Giorgi--Moser--Nash theory remains valid in the non-symmetric setting.
Moreover, Gr\"uter and Widman \cite{GW82} extended the result of Littman, Stampacchia, and Weinberger \cite{LSW63} to non-symmetric operators.
Thus the main difficulty arises neither from a failure of interior regularity nor from a failure of the maximum principle.
It lies instead in the fact that uniform ellipticity involves only the symmetric part of $\mathbf A$, whereas the skew-symmetric part may be large and can strongly affect boundary behavior.

Small non-symmetric perturbations of operators for which the $L^2$ Dirichlet problem is solvable often preserve $L^2$ solvability; see \cite{AAAHK}.
By contrast, large non-symmetric perturbations can lead to phenomena with no counterpart in the symmetric theory.
A particularly instructive example is the matrix
\begin{equation} \label{eq:mat_A}
\mathbf A(x,y)=
\begin{pmatrix}
1 & k\,\operatorname{sgn}(x)\\
-k\,\operatorname{sgn}(x) & 1
\end{pmatrix},
\qquad k\in\mathbb R.
\end{equation}
This matrix is uniformly elliptic for every $k$, since $\xi\cdot\mathbf A\xi=\abs{\xi}^2$, but its skew-symmetric part has a jump discontinuity across the vertical line.
Kenig, Koch, Pipher, and Toro \cite{KKPT} showed that, for this model in the half-plane, $L^2$ solvability may fail even though the $L^p$ Dirichlet problem remains solvable for sufficiently large $p$.
See also Hofmann, Kenig, Mayboroda, and Pipher \cite{HKMP} for related square-function and non-tangential maximal estimates in $L^p$ in higher-dimensional half-spaces.
Axelsson \cite{Axelsson} analyzed the same coefficient structure by first-order semigroup methods in the half-plane setting.

The present paper revisits this model in the unit disk and isolates a phenomenon not captured by the usual formulation of the $L^p$ Dirichlet problem.
In the classical theory, the maximum principle and uniqueness yield a positive $L$-harmonic measure.
Under $L^p$ solvability, this measure is absolutely continuous with respect to surface measure, and its density, the Poisson kernel, is nonnegative and satisfies a reverse H\"older bound.
We show that other natural solution branches for a non-symmetric elliptic operator need not share this property.
For the same uniformly elliptic operator and the same boundary data, a positivity-preserving branch in the energy class can coexist with branches outside that class whose Poisson kernels change sign.
Thus, the sign change reflects neither a failure of ellipticity nor a failure of the maximum principle, but rather nonuniqueness beyond the energy class and dependence on the selected branch.
Even when solutions attain the prescribed boundary data and satisfy the expected non-tangential estimates, the solution operator need not be unique, and some natural branches are represented by sign-changing kernels.

Our aim is to identify the mechanism responsible for this phenomenon.
In the bounded domain setting considered here, we describe the distinct solution branches through the spectrum of the boundary operator and the index of the associated Riemann--Hilbert factorization.
This shows that sign-changing Poisson kernels can also arise in a bounded domain, although the underlying branch structure differs essentially from Axelsson's half-plane model.

Let $\mathbb D$ denote the unit disk, and consider the Dirichlet problem
\[
\operatorname{div}(\mathbf A\nabla u)=0 \quad\text{in }\mathbb D,
\qquad
u=g \quad\text{on }\partial\mathbb D,
\]
where $\mathbf A$ is given by \eqref{eq:mat_A}.
By reformulating the equation as a first-order Dirac system and applying Hardy decompositions on the unit circle, we obtain the representation
\begin{equation}	\label{eq0006thu}
u(r,\,\cdot\,)=\mathsf{P_r}g-k[\mathsf{P_r},\mathsf{M_s}]\tilde{g},
\qquad \tilde{g}=(\mathsf{I}-\mathsf{R})\psi.
\end{equation}
Here, $\mathsf{P_r}$ is the classical Poisson extension operator, $\mathsf{M_s}$ denotes multiplication by
\[
\mathsf{s}(\theta):=\sgn(\cos\theta),
\]
$[\mathsf{P_r},\mathsf{M_s}]$ is the corresponding commutator, and $\mathsf{R}$ is the reflection operator on the circle.
The function $\psi$ is determined by the boundary equation
\begin{equation} \label{eq0009thu}
(2i\mathsf{I}+k\mathsf{T})\psi=\mathsf{B}g,
\end{equation}
where $\mathsf{T}=\mathsf{B}\mathsf{M_s}(\mathsf{I}-\mathsf{R})$ and  $\mathsf{B}$ is an operator associated with the Hilbert transform $\mathsf{H}$ on the unit circle, characterized by
\[
(\mathsf{I}-\mathsf{R})\mathsf{B}
=i(\mathsf{I}-\mathsf{R})\mathsf{H}.
\]
Thus, the Dirichlet problem is reduced to the analysis of the boundary operator $\mathsf{T}$.
Equivalently, after passing to the $\opR$-symmetric subspace $L^2_+$, the problem is governed by the operator
\[
\mathsf{C}=\mathsf{M}_s\mathsf{H}\vert_{L^2_+}.
\]

A central operator-theoretic result of the paper is
\[
\sigma(\mathsf C)=\overline{\mathbb D}.
\]
This spectral picture explains the sharp transition at $\abs{k}=1$ in the $L^2$ theory.
When $\abs{k}<1$, the boundary equation has a unique $L^2$ solution, given by a Neumann series.
At $\abs{k}=1$, it is not solvable for arbitrary $L^2$ data.
When $\abs{k}>1$, the associated operator is surjective but not injective, so every $L^2$ datum admits a solution, while any two solutions may differ by a nontrivial homogeneous solution.
A canonical orthogonality condition selects a bounded linear solution operator.
This loss of injectivity is the operator-theoretic source of the branching phenomenon.

Our first main result shows that whenever \eqref{eq0009thu} is solvable in $L^2(\partial\mathbb D)$, the function defined by \eqref{eq0006thu} is a weak solution satisfying weighted energy and $L^2$ non-tangential maximal estimates and attaining the boundary data in $L^2$.
In particular, $u \to g$ non-tangentially almost everywhere on $\partial\mathbb D$, placing the solution in the classical framework of the $L^2$ boundary value problem.

The second main result concerns positivity and sign change.
For $\abs{k}<1$, the solution operator obtained from the Neumann series preserves positivity, so nonnegative boundary data yield nonnegative solutions.
The Riemann--Hilbert formulation shows that the positivity-preserving branch is not confined to the regime $\abs{k}<1$.
It remains available for larger $\abs{k}$, provided that the boundary datum has sufficient integrability.
Moreover, if the boundary datum belongs to $H^\alpha(\partial\mathbb D)$ for some $\alpha>\frac12$, then the corresponding solution belongs to $H^1(\mathbb D)$ and hence satisfies the weak maximum principle.
For each $k \neq 0$, a second branch is available under an appropriate integrability condition on the boundary datum.

The Riemann--Hilbert factorization introduced in Section~\ref{sec:RH} makes the distinction transparent.
Let
\[
X(z)=\left(\frac{1-iz}{1+iz}\right)^\mu, \qquad X(0)=1, \qquad \mu=-\frac{2\arctan k}{\pi}+N.
\]
The requirement $\mu\in(-1,1)$ restricts the possible choices of the integer $N$.
When $k\neq0$, there are two such choices, and the parity of $N$ becomes decisive.
For the $L^2$ Dirichlet problem, however, one must further require $\mu\in(-\frac12,\frac12)$, which selects the relevant branch in the $L^2$ theory.
For $\abs{k}<1$, the natural choice is $N=0$, and the usual normalization of the Schwarz integral fixes the additive constant.
For $\abs{k}>1$, the natural choice is $N=\sgn k$, and the odd parity contributes a factor of $i$ to the Riemann--Hilbert problem.
In this case, the same normalization leaves a free constant.
Upon reconstructing $u$, the free constant becomes the coefficient of a nontrivial homogeneous solution.
An additional normalization is therefore required to select one solution for each boundary datum.
The canonical orthogonality condition removes this ambiguity by eliminating the homogeneous component.

This branch structure is fundamentally different from that in Axelsson's half-plane model.
In the unit disk, the the restriction $\mu \in(-1,1)$ leaves two adjacent indices, $N=0$ and $N=\sgn k$, differing by one.
In the half-plane, the corresponding exponent $\alpha$ satisfies $\tan(\pi\alpha/2)=k$ and is therefore determined modulo $2$.
Thus, an index shift by one is unavailable, while the admissible range $\alpha\in(-2,1)$ yields two branches whose indices differ by two.

The effect of the nonzero index on the resulting solution is also explicit.
In particular, the datum $g\equiv1$ does not yield $u\equiv1$, but rather a nonconstant solution.
The corresponding kernel therefore cannot represent $L$-harmonic measure.
At the same time, the analysis explains why a positivity-preserving solution remains available for $L^p$ data when $p$ is sufficiently large, consistently with the $L^p$ solvability results in \cite{KKPT,HKMP}.

More precisely, for $1<p<\infty$, the sharp threshold is
\[
\abs{k}=\cot \big(\frac{\pi}{2p}\big).
\]
Below this threshold, the $L^p$ Dirichlet problem is uniquely solvable and the corresponding $N=0$ branch has a nonnegative Poisson kernel.
Above the threshold, the corresponding homogeneous problem has a nontrivial solution, so uniqueness fails in the same class.
Moreover, for real-valued boundary data, every bounded linear selection from the resulting nonunique family is represented by a sign-changing kernel.
When $p=2$, the threshold reduces to $\abs{k}=1$.
There is no contradiction with the maximum principle, since the standard weak maximum principle applies to real $H^1(\mathbb D)$ solutions.
For the constant datum $g \equiv 1$, however, the branch $N=\sgn k$ produces an explicit nonconstant solution that changes sign, and a direct computation shows that this solution does not belong to $H^1(\mathbb D)$.

We conclude the introduction with an overview of the paper.
In Section~\ref{sec:first_order}, we reduce the second-order equation to a first-order Dirac system and derive the solution representation \eqref{eq0006thu} together with the boundary equation \eqref{eq0009thu}.
Section~\ref{sec:weak_sol} establishes weighted energy estimates, the weak solution formulation, $L^2$ boundary convergence, non-tangential maximal estimates, and almost everywhere non-tangential convergence to the boundary data.
In Section~\ref{sec:solution}, we analyze the boundary equation \eqref{eq0009thu}, including the spectrum of $\mathsf{C}$ and the solvability dichotomy at $\abs{k}=1$.
Section~\ref{sec:small_k} treats the case $\abs{k}<1$ by elementary methods and initiates the study of the $L^p$ Dirichlet problem.
Finally, Section~\ref{sec:RH} develops the Riemann--Hilbert formulation and explains the role of the factorization index.
There, we identify the positivity-preserving branch that produces solutions in the energy class and distinguish it from the branches outside that class that give rise to sign-changing Poisson kernels.
We also establish the precise threshold at which branching occurs for $L^p$ boundary data.
The main results are summarized in Theorem~\ref{thm:kernel}.

\section{Reduction to Dirac systems and solution representation}
\label{sec:first_order}
We employ the first-order Dirac systems approach, a method introduced in \cite{AAH} in the
study of complex elliptic operators.
Following its initial development via functional calculus, this framework was extended to
elliptic systems with complex coefficients \cite{AAM2010}, as well as to weighted maximal
regularity estimates \cite{AA2011, AR12}.

In this section, we present a self-contained introduction showing how the Dirichlet problem
\begin{equation} \label{eq:dp_in_disk}
Lu=\mathrm{div}(\mathbf{A}\nabla u)=0 \quad \text{in } \mathbb{D}, \qquad
u=g \quad \text{on } \partial\mathbb{D},
\end{equation}
can be solved using this first-order approach.
Here, $\mathbb{D}$ denotes the unit disk in $\mathbb{R}^2$ and $\partial\mathbb{D}$ its boundary, the unit circle.

This section is self-contained, and no prior familiarity with first-order Dirac systems is required.

\subsection{First-order Dirac systems}

We begin by rewriting the equation
\begin{equation} \label{eq:u}
Lu = \mathrm{div}(\mathbf{A} \nabla u) = 0 \quad \text{in }\, \mathbb{D},
\end{equation}
in polar coordinates.
In the polar basis we have
\begin{equation}	\label{eq1636sun}
\mathbf{A} =
\begin{pmatrix}
1 & k \mathsf{s} \\
-k \mathsf{s} & 1
\end{pmatrix},\qquad \mathsf{s}=\mathsf{s}(\theta):=\sgn(\cos \theta),
\end{equation}
and the operator becomes
\[
\mathrm{div} (\mathbf{A} \nabla u)=
\frac{1}{r} \frac{\partial}{\partial r} \left( r\frac{\partial u}{\partial r}+k\mathsf{s} \frac{\partial u}{\partial \theta} \right) +\frac{1}{r} \frac{\partial}{\partial \theta} \left( -k\mathsf{s} \frac{\partial u}{\partial r} + \frac{1}{r} \frac{\partial u}{\partial \theta}\right).
\]
Next, we introduce the change of variables $r = e^{-t}$.
Under this transformation, equation~\eqref{eq:u} becomes
\begin{equation}		\label{eq1440mon2}
\frac{\partial}{\partial t} \left(\frac{\partial u}{\partial t}-k \mathsf{s} \frac{\partial u}{\partial \theta} \right) +\frac{\partial}{\partial \theta} \left( k\mathsf{s} \frac{\partial u}{\partial t} +\frac{\partial u}{\partial \theta}\right)=0\quad\text{in}\quad \mathbb{R}_{+}\times \partial\mathbb{D}.
\end{equation}
This can be compactly written as
\begin{equation}		\label{eq1440mon}
\mathrm{div}(\mathbf{A}^\top \nabla u) =0 \quad\text{in}\quad \mathbb{R}_{+}\times \partial\mathbb{D},
\end{equation}
where $\mathbf{A}$ is as defined in \eqref{eq1636sun}.
It is worth noting that we obtained~\eqref{eq1440mon} from~\eqref{eq:u} through the conformal change of variables
\[
(x,y) \mapsto (t,\theta),\qquad\text{with}\quad
x=e^{-t}\cos\theta, \quad y=e^{-t}\sin \theta.
\]

Following the approach in~\cite{AA2011} (see also~\cite{AAM2010, AAH}), we observe that equation~\eqref{eq1440mon} can be equivalently written as
\begin{equation}			\label{eq1549mon}
\frac{\partial}{\partial t}
\begin{pmatrix}
1 & -k \mathsf{s} \\
0 & 1
\end{pmatrix}
\binom{\frac{\partial u}{\partial t}}{\frac{\partial u}{\partial \theta}}+
\begin{pmatrix}
0 & \frac{\partial}{\partial \theta} \\
-\frac{\partial}{\partial\theta} & 0
\end{pmatrix}
\begin{pmatrix}
1 &0 \\
k \mathsf{s} & 1
\end{pmatrix}
\binom{\frac{\partial u}{\partial t}}{\frac{\partial u}{\partial \theta}}=0.
\end{equation}
Indeed, the first row of~\eqref{eq1549mon} corresponds exactly to~\eqref{eq1440mon}, while the second row reduces to the identity
\[
\frac{\partial^2 u}{\partial t \partial \theta} - \frac{\partial^2 u}{\partial \theta \partial t}=0.
\]

We now introduce the conormal gradient field
\begin{equation}			\label{eq1538mon}
\vec F:=
\begin{pmatrix}
1 & -k \mathsf{s} \\
0 & 1
\end{pmatrix}
\binom{\frac{\partial u}{\partial t}}{\frac{\partial u}{\partial \theta}}
=\left(\frac{\partial u}{\partial t} - k \mathsf{s} \frac{\partial u}{\partial \theta},\,\frac{\partial u}{\partial \theta} \right)^\top
\end{equation}
and define the matrix
\begin{equation}			\label{eq1539mon}
\mathbf{B}:=
\begin{pmatrix}
1 &0 \\
k \mathsf{s} & 1
\end{pmatrix}
\begin{pmatrix}
1 & -k \mathsf{s} \\
0 & 1
\end{pmatrix}^{-1}=
\begin{pmatrix}
1 & k \mathsf{s} \\
k \mathsf{s} & 1+k^2 \mathsf{s}^2
\end{pmatrix}.
\end{equation}
Note that $\mathbf{B}$ is independent of $t$.

With these definitions, the first-order system~\eqref{eq1549mon} takes the compact form
\begin{equation}			\label{eq:first_order}
\frac{\partial}{\partial t} \vec F+ \mathbf{D}\mathbf{B} \vec F=0 \quad\text{in}\quad \mathbb{R}_{+}\times \partial\mathbb{D},
\end{equation}
where $\mathbf{D}$ is the tangential Dirac operator
\begin{equation} \label{eq:matrix_DB}
\mathbf{D} =
\begin{pmatrix}
0 & \frac{\partial}{\partial \theta} \\
-\frac{\partial}{\partial\theta} & 0
\end{pmatrix}=
\mathbf{J}^\top \frac{\partial}{\partial \theta}
\quad \text{and}\quad \mathbf{J}=
\begin{pmatrix}
0 & -1 \\
1 & 0
\end{pmatrix}.
\end{equation}

We observe that there is a one-to-one correspondence between solutions $\vec{F}$ of~\eqref{eq:first_order} and gradients $\nabla u$ of solutions $u$ to~\eqref{eq1440mon}.
Indeed, as shown earlier, if $u$ satisfies~\eqref{eq1440mon}, then the vector field $\vec F$ defined by~\eqref{eq1538mon} satisfies~\eqref{eq:first_order}.
Conversely, suppose that $\vec F$ satisfies~\eqref{eq:first_order}.
Using~\eqref{eq1539mon} and~\eqref{eq:matrix_DB}, we may rewrite~\eqref{eq:first_order} in the form
\[
\frac{\partial}{\partial t}
\begin{pmatrix}
1 & -k \mathsf{s} \\
0 & 1
\end{pmatrix}
\begin{pmatrix}
1 & -k \mathsf{s} \\
0 & 1
\end{pmatrix}^{-1}
\vec F+
\frac{\partial}{\partial \theta}
\begin{pmatrix}
0  &1 \\
-1 & 0
\end{pmatrix}
\begin{pmatrix}
1&0 \\
k \mathsf{s}  & 1
\end{pmatrix}
\begin{pmatrix}
1 & -k \mathsf{s} \\
0 & 1
\end{pmatrix}^{-1}
\vec F=0.
\]

Define
\begin{equation}			\label{eq0606tue}
\binom{v}{w}:=
\begin{pmatrix}
1 & -k \mathsf{s} \\
0 & 1
\end{pmatrix}^{-1}
\vec F=
\begin{pmatrix}
1 & k \mathsf{s} \\
0 & 1
\end{pmatrix}
\vec F.
\end{equation}
Then $(v, w)^\top$ satisfies the system
\begin{equation}			\label{eq2116mon}
\frac{\partial}{\partial t}
\begin{pmatrix}
1 & -k \mathsf{s} \\
0 & 1
\end{pmatrix}
\binom{v}{w}+
\frac{\partial}{\partial \theta}
\begin{pmatrix}
k \mathsf{s} &1 \\
-1 & 0
\end{pmatrix}
\binom{v}{w}=0.
\end{equation}
The second row of~\eqref{eq2116mon} yields
\[
\frac{\partial w}{\partial t}-\frac{\partial v}{\partial \theta}=0.
\]
Hence, there exists a scalar function $u$ on $\mathbb{R}_{+}\times \partial\mathbb{D}$ satisfying
\begin{equation}			\label{eq0609tue}
\frac{\partial u}{\partial t}=v, \quad \frac{\partial u}{\partial \theta}=w.
\end{equation}
Substituting these relations into the first row of~\eqref{eq2116mon} yields~\eqref{eq1440mon2}.

This completes the proof of the desired one-to-one correspondence between solutions of the first-order system~\eqref{eq:first_order} and the gradients of solutions to the second-order equation~\eqref{eq1440mon}.

\subsection{Setup for the Dirichlet problem and the dual system}
The goal of this subsection is to link the transformed Dirichlet problem
\begin{equation} \label{eq:dp}
Lu = \mathrm{div}(\mathbf{A}^\top \nabla u) = 0\quad\text{in}\quad \mathbb{R}_{+} \times \partial\mathbb{D},\qquad u(0, \cdot)=g \quad\text{on}\quad \partial\mathbb{D},
\end{equation}
with the first-order system~\eqref{eq:first_order}.
Recall that if $u$ is a solution of~\eqref{eq1440mon}, then $\vec F$ defined in \eqref{eq1538mon} satisfies \eqref{eq:first_order}.
\subsubsection*{The dual first-order system}
Following the approach in~\cite{AAM2010}, we now consider the dual first-order system
\begin{equation}			\label{eq:v}
\frac{\partial}{\partial t} \vec V +\mathbf{B} \mathbf{D} \vec V=0 \quad\text{in}\quad \mathbb{R}_{+}\times \partial\mathbb{D}.
\end{equation}
Writing $\vec V=(v, w)^\top$, this system takes the form
\[
\frac{\partial}{\partial t}
\binom{v}{w}+
\begin{pmatrix}
-k \mathsf{s} &  1\\
-(1+k^2 \mathsf{s}^2) & k \mathsf{s}
\end{pmatrix}
\frac{\partial}{\partial \theta}
\binom{v}{w}=0.
\]
In particular, from the first row we obtain
\begin{equation}			\label{eq0653tue}
\frac{\partial v}{\partial t}= k\mathsf{s} \frac{\partial v}{\partial \theta} - \frac{\partial w}{\partial \theta}.
\end{equation}

Note that $\vec F= \mathbf{D} \vec V=(\frac{\partial w}{\partial \theta}, -\frac{\partial v}{\partial \theta})^\top$ satisfies \eqref{eq:first_order}.
Hence, there exists a function $u$ such that (cf.~\eqref{eq0606tue} and~\eqref{eq0609tue})
\begin{equation}			\label{eq0657tue}
\frac{\partial u}{\partial t}=   \frac{\partial w}{\partial \theta} - k \mathsf{s} \frac{\partial v}{\partial \theta} , \quad \frac{\partial u}{\partial \theta}=-\frac{\partial v}{\partial \theta},
\end{equation}
and $u$ satisfies~\eqref{eq1440mon}.
Comparing~\eqref{eq0653tue} and~\eqref{eq0657tue}, we find that
\[
v=-u+c \quad\text{for some constant }\,c.
\]
Hence, we conclude that if $\vec V$ is a solution of~\eqref{eq:v}, then $v$, the first component of $\vec V$, is a solution of~\eqref{eq1440mon}.
This key observation was first made in~\cite{AAM2010}.

\subsubsection*{Hardy decomposition and Cauchy extension}
It is straightforward to verify that the spectrum of the first-order operator $\mathbf{B}\mathbf{D}$ is the set of all integers, $\mathbb{Z}$, and that each eigenvalue $n \in \mathbb{Z}$ has multiplicity $2$.
Indeed, note that
\[
\mathbf{B}\mathbf{D}=\mathbf{M}\frac{d}{d\theta},
\quad\text{where}\quad
\mathbf{M}:=
\mathbf{B}\mathbf{J}^\top=
\begin{pmatrix}
-k \mathsf{s} &  1\\[2pt]
-(1+k^2 \mathsf{s}^2) & k \mathsf{s}
\end{pmatrix}.
\]

It is convenient to identify $\partial\mathbb{D}$ with $\mathbb{R}/(2\pi \mathbb{Z})$ and introduce the intervals
\[
I_{(+)}:=(-\tfrac{\pi}{2}, \tfrac{\pi}{2}),\qquad I_{(-)}:=(\tfrac{\pi}{2}, \tfrac{3\pi}{2}).
\]
Note that $\mathbf{M}$ is constant on each of $I_{(+)}$ and $I_{(-)}$.
Define
\[
\mathbf{M}_+: =\begin{pmatrix}-k&1\\[2pt]-(1+k^2)&k\end{pmatrix}, \qquad
\mathbf{M}_- :=\begin{pmatrix} k&1\\[2pt]-(1+k^2)&-k\end{pmatrix}
\]
and set
\begin{equation}	\label{basis_vn}
\begin{aligned}
\vec v_n^{(+)}(\theta)&:=
\begin{cases}
\displaystyle
(-i)^{n} (\mathbf{I}-i \mathbf{M}_+)\, \binom{i}{1}\, e^{-i n\theta}
+ i^{n}(\mathbf{I}+i \mathbf{M}_+)\, \binom{i}{1}\, e^{i n\theta}, & \theta\in I_{(+)},\\[10pt]
\displaystyle
(-i)^{n} (\mathbf{I}-i \mathbf{M}_-)\, \binom{i}{1}\, e^{-i n\theta}
+ i^{n}\,(\mathbf{I}+i \mathbf{M}_-)\, \binom{i}{1}\, e^{i n\theta}, & \theta\in I_{(-)},
\end{cases}\\[5pt]
\vec v_n^{(-)}(\theta)&:= \overline{\vec v_n^{(+)}(\theta)}.
\end{aligned}
\end{equation}

\smallskip
Observe that $\mathbf{M}_{\pm}^2=-\mathbf{I}$, which implies
\[
(\mathbf{I} -i \mathbf{M}_{\pm})\mathbf{M}_{\pm} = i (\mathbf{I} - i \mathbf{M}_{\pm}),\qquad
(\mathbf{I} +i \mathbf{M}_{\pm})\mathbf{M}_{\pm} = -i (\mathbf{I} + i \mathbf{M}_{\pm}).
\]
Using these identities, one readily verifies that
\[
\mathbf{B}\mathbf{D} \vec v_n^{(\pm)} = n \vec v_n^{(\pm)}\quad\text{on each } \,I_{(\pm)}.
\]
Furthermore, it is straightforward to check that 
\[
\vec v_n^{(\pm)} \in H^1(\partial\mathbb{D}; \mathbb{C}^2).
\]
In particular, note that $\vec v_n^{(\pm)}$ is continuous at $\pi/2$ and $3\pi/2$, where the sign of $\mathsf{s}$ changes.
Therefore, the two functions $\vec v_n^{(+)}$ and $\vec v_n^{(-)}$ form a basis for the eigenspace corresponding to the eigenvalue $n \in \mathbb{Z}$.
It is straightforward to check that $\{\vec v_n^{(+)}, \vec v_n^{(-)}\}_{n \in \mathbb{Z}}$ forms a complete, orthogonal basis for the Hilbert space $L^2(\partial\mathbb{D}; \mathbb{C}^2)$.

If we write $\vec f$ as a series expansion with respect to $\{\vec v_n^{(+)}, \vec v_n^{(-)}\}_{n \in \mathbb{Z}}$, that is,
\[
\vec f(\theta)= \sum_{n=-\infty}^\infty \left(a_n\vec v_{n}^{(+)}(\theta) + b_n\vec v_{n}^{(-)}(\theta) \right),
\]
then the Hardy projection $\mathbf{E}^{+}$ and the Cauchy extension operator $\mathbf{C}_t^+$ are given by
\begin{align}
			\label{eq2037sat}
\mathbf{E}^{+} \vec f (\theta)&=\sum_{n=0}^\infty \left(a_n \vec v_{n}^{(+)}(\theta) + b_n \vec v_n^{(-)}(\theta) \right),\\
			\label{eq2038sat}
\mathbf{C}_{t}^{+}\vec f (\theta)&= \sum_{n=0}^\infty e^{-tn}\left(a_n \vec v_{n}^{(+)}(\theta) + b_n \vec v_n^{(-)}(\theta) \right).
\end{align}
Then the function $\vec V(t, \theta) =\mathbf{C}^{+}_t \vec f(\theta)$ satisfies the evolution equation
\begin{equation}			\label{eq0500sat}
\frac{\partial}{\partial t} \vec V+ \mathbf{B} \mathbf{D} \vec V=0,\qquad
\lim_{t\to 0+} \vec V(t, \cdot)= \mathbf{E}^{+}\vec f.
\end{equation}

\subsection{Boundary equations and series representation}
Setting
\[
\mathbf{C}^{+}_t \vec f=(u, \tilde{u})^\top,
\]
we observe from \eqref{eq0500sat} that $u$ is a solution of~\eqref{eq1440mon}.
Moreover,
\begin{equation}			\label{eq0920fri}
\lim_{t\to 0+} u(t,\cdot)=(\mathbf{E}^{+} \vec f)_{1},
\end{equation}
where $(\mathbf{E}^{+} \vec f )_{1}$ denotes the first component of $\mathbf{E}^{+} \vec f$.

Consequently, for a given Dirichlet boundary datum $g$, if one can find $\vec f$ such that the first component of $\mathbf{E}^{+} \vec f$ equals $g$, then the Dirichlet problem~\eqref{eq:dp} is solved.

From~\eqref{basis_vn}, the first component $\phi_n^{(+)}$ of $\vec v^{(+)}_n$ is given explicitly by
\[
\phi_n^{(+)}(\theta) =
\begin{cases}
(-i)^n (-k) e^{-i n\theta} + i^n (k+2i) e^{i n\theta}, & \theta\in I_{(+)},\\[5pt]
(-i)^n k e^{-i n\theta} + i^n (-k+2i) e^{i n\theta}, & \theta\in I_{(-)}.
\end{cases}
\]
We write this as
\begin{equation}		\label{eq1327thu}
\phi_n^{(+)}(\theta) = (-i)^n (-k \mathsf{s}) e^{-i n\theta}+i^n (k \mathsf{s}+2i)e^{i n\theta}.
\end{equation}
The first component $\phi_n^{(-)}$ of $\vec v^{(-)}_n$ is then given by
\begin{equation}		\label{eq1328thu}
\phi_n^{(-)}(\theta) = i^n(-k \mathsf{s}) e^{i n\theta}+(-i)^n (k \mathsf{s}-2i)e^{-i n\theta}.
\end{equation}

For a given $g \in L^2(\partial \mathbb{D})$, in light of \eqref{eq2037sat} and \eqref{eq0920fri}, we seek coefficients $a_n$, $b_n$ such that
\begin{equation}			\label{eq1320thu}
g=\sum_{n=0}^\infty a_n \phi_n^{(+)}+ \sum_{n=0}^\infty  b_n \phi_n^{(-)},\quad b_0=0.
\end{equation}
The functions $\phi_0^{(+)}$ and $\phi_0^{(-)}$ are linearly dependent, and we retain only one and set $b_0=0$.
In Fourier form,
\[
g(\theta)= \sum_{n=0}^\infty i^{n} \left( (k \mathsf{s}+2i)a_n - k \mathsf{s} b_n \right) e^{i n\theta} + \sum_{n=0}^\infty (-i)^{n}\left( -k \mathsf{s} a_n + (k \mathsf{s}-2i) b_n \right) e^{-i n\theta}.
\]

To express this more compactly, define the following a priori unknown functions:
\[
\alpha(\theta):=\sum_{n=0}^\infty i^{n}a_n e^{in\theta},\qquad
\beta(\theta):=\sum_{n=0}^\infty i^{n}b_n e^{in\theta}.
\]
These functions belong to the classical Hardy space $H^2_{\rm Hardy}(\partial\mathbb{D})$; see
\cite{Garnett} for details on Hardy spaces.
Introduce the reflection operator $\opR: L^2(\partial\mathbb{D}) \to L^2(\partial\mathbb{D})$ by
\[
\opR f (\theta)=f(\pi-\theta),
\]
where, here and throughout, we identify $f(\theta)$ with $f(e^{i\theta})$.
Note that
\[
\opR f(\theta)=\sum_{n=-\infty}^\infty (-1)^{n} \hat f(n)\,e^{-in\theta}.
\]
It is straightforward to see that
\begin{equation}			\label{property_r}
\opR^2=\opI,\qquad \opR1=1.
\end{equation}
Note that
\[
\opR \alpha(\theta)=\sum_{n=0}^\infty (-i)^{n}a_n e^{-in\theta}, \qquad  \opR \beta (\theta)=\sum_{n=0}^\infty (-i)^{n}b_n e^{-in\theta}.
\]
In this setting, we seek functions $\alpha$ and $\beta$ satisfying
\begin{equation}		\label{eq_for_g}
g(\theta)= 2i(\alpha-\opR\beta)+k \mathsf{s} [(\alpha-\beta)- \opR(\alpha-\beta)].
\end{equation}

Recall that we employed the change of variables $r=e^{-t}$ to transform equation~\eqref{eq:u} in polar coordinates into~\eqref{eq1440mon2}.
In light of~\eqref{eq2038sat} and \eqref{eq1320thu}, the solution to the Dirichlet problem \eqref{eq:dp_in_disk} can therefore be expressed as
\[
u(r,\theta)=\sum_{n=0}^\infty r^n a_n \phi_n^{(+)}+ \sum_{n=0}^\infty r^n  b_n \phi_n^{(-)},\qquad b_0=0.
\]
It is straightforward to verify that the functions
\[
u^{(\pm)}_n(r,\theta):=r^n \phi_n^{(\pm)}(\theta),\qquad n =0,1,2,\ldots,
\]
are weak solutions of \eqref{eq:u}.
By using \eqref{eq1327thu} and \eqref{eq1328thu}, we obtain
\begin{multline}			\label{eq1003sat}
u(r, \theta)=
\sum_{n=0}^\infty r^n a_n \left((-i)^{n}(-k \mathsf{s}) e^{-i n\theta}+i^{n}(k \mathsf{s}+2i)e^{i n\theta} \right) \\
+ \sum_{n=0}^\infty r^n b_n \left( i^n (-k \mathsf{s}) e^{i n\theta}+(-i)^n(k \mathsf{s} -2i)e^{-i n\theta} \right).
\end{multline}

\subsection{Representation of the solution}
For $0 < r<1$, introduce the operator $\mathsf{P_r}$ that multiplies $r^n$ at Fourier mode $n$, that is, for $f\in L^2(\partial\mathbb{D})$, we define
\[
\mathsf{P_r} f(\theta):=\sum_{n=-\infty}^\infty r^{\abs{n}} \hat f(n) e^{in\theta}.
\]
Note that $\mathsf{P_r}$ coincides with the classical Poisson extension operator
\[
\mathcal{P}_r \ast f(\theta) = \frac{1}{2\pi} \int_{-\pi}^{\pi} \mathcal{P}_r(\theta-\phi) f(\phi)\,d\phi,\qquad 
\mathcal{P}_r(\phi) := \frac{1-r^2}{1-2r\cos\phi + r^2},
\]
where $\mathcal{P}_r(\phi)$ is the Poisson kernel on the unit circle.

We also introduce the multiplication operator by the symbol $\mathsf{s}$,
\[
\mathsf{M_s} f(\theta):= \mathsf{s}(\theta) f(\theta).
\]
With this notation, equation \eqref{eq1003sat} may be rewritten as
\begin{equation}		\label{eq1431thu}
u(r,\theta)= 2i \mathsf{P_r} (\alpha - \opR\beta)(\theta) + k\mathsf{M_s} \mathsf{P_r} [ (\opI-\opR)(\alpha-\beta) ](\theta).
\end{equation}

Our goal in this subsection is to obtain a representation formula for \eqref{eq1431thu} in terms of the given datum $g \in L^2(\partial\mathbb{D})$.
Consider the Hardy projection operators
\[
\opP^{+}f(\theta) = \sum_{n = 0}^\infty \hat{f}(n)e^{in\theta},\qquad \opP^{-}:=\opI-\opP^{+}.
\]
Applying $\opP^{+}$ and $\opP^{-}$ to \eqref{eq_for_g}  gives (recall $b_0=0$)
\begin{equation}	\label{eq1325wed}
\begin{aligned}
\opP^{+} g &= 2i \alpha + k \opP^{+} \mathsf{M_s}(\opI-\opR)(\alpha-\beta),\\
\opP^{-}g &= -2i\opR\beta + k\opP^{-}\mathsf{M_s}(\opI-\opR)(\alpha-\beta),
\end{aligned}
\end{equation}
Using these and \eqref{property_r}, we obtain
\begin{equation}			\label{eq1015sat}
(\opP^{+} + \opR\opP^{-})g= [2i\opI+k(\opP^{+}+\opR\opP^{-})\mathsf{M_s}(\opI-\opR)](\alpha-\beta).
\end{equation}
Define
\[
\mathsf{B} := \opP^{+}+\opR\opP^{-}=\opP^{+} + \opR(\opI-\opP^{+})
\]
and
\begin{equation}			\label{op_t}
\mathsf{T} := \mathsf{B} \mathsf{M_s}(\opI-\opR),
\end{equation}
so that \eqref{eq1015sat} becomes
\[
(2i\opI+k\mathsf{T})(\alpha-\beta) = \mathsf{B} g.
\]
Therefore, we are led to consider the following problem: For $g \in L^2(\partial\mathbb{D})$, find $\psi \in L^2(\partial\mathbb{D})$ such that
\begin{equation}		\label{eq0858mon}
(2i\opI+k\mathsf{T})\psi = \mathsf{B} g.
\end{equation}

Assume for the moment that there exists such a solution $\psi$.
Then we recover $\alpha$ and $\beta$ from $\psi=\alpha-\beta$ as follows.
With $\psi$ satisfying \eqref{eq0858mon}, set
\begin{equation}		\label{eq0923wed}
\tilde{g}:=(\opI - \opR)\psi.
\end{equation}
Then, from \eqref{eq1325wed} and $\psi=\alpha-\beta$, we obtain
\[
\alpha=\frac{1}{2i} \opP^{+}(g - k\mathsf{s}\tilde{g}),\qquad
\beta=\alpha-\psi.
\]
Combining these, we obtain
\begin{equation}		\label{eq1708thu}
\alpha-\opR\beta = \alpha- \opR(\alpha-\psi)=
\frac{1}{2i}(\opI-\opR) \opP^{+}(g - k\mathsf{s}\tilde{g})+\opR \psi.
\end{equation}
Then, by \eqref{eq1431thu} and \eqref{eq1708thu}, we obtain
\begin{equation}			\label{eq1311tue}
u(r,\cdot )=\mathsf{P_r} (\opI-\opR)\opP^{+}(g-k\mathsf{s}\tilde{g})+2i \mathsf{P_r} \opR \psi + k\mathsf{M_s} \mathsf{P_r} \tilde{g}.
\end{equation}
From the equation \eqref{eq0858mon}, and the definitions \eqref{op_t} and \eqref{eq0923wed}, we obtain
\[
2i \psi= \mathsf{B}(g-k\mathsf{s}\tilde{g}).
\]
So, the equation \eqref{eq1311tue} becomes
\begin{equation}			\label{eq1332tue}
u(r,\cdot )=\mathsf{P_r} (\opI-\opR)\opP^{+}(g-k\mathsf{s}\tilde{g})+\mathsf{P_r} \opR \mathsf{B}(g-k\mathsf{s}\tilde{g})+ k\mathsf{M_s} \mathsf{P_r} \tilde{g}.
\end{equation}
Using $\mathsf{B} = \opP^{+} + \opR\opP^{-}$ and $\opP^{-} = \opI-\opP^{+}$, we obtain
\[
 (\opI-\opR)\opP^{+} +\opR \mathsf{B} = \opI.
\]
Hence, the equation \eqref{eq1332tue} is
\[
u(r,\cdot) = \mathsf{P_r}(g-k\mathsf{s}\tilde{g})  +k \mathsf{M_s} \mathsf{P_r}\tilde{g},
\]
and using commutator notation $[\mathsf{P_r}, \mathsf{M_s}]=\mathsf{P_r}\mathsf{M_s} - \mathsf{M_s} \mathsf{P_r}$, we obtain
\begin{equation}		\label{eq1121mon}
u(r,\cdot) = \mathsf{P_r} g -k[\mathsf{P_r}, \mathsf{M_s}] \tilde{g},\qquad \tilde{g}:=(\opI-\opR)\psi,
\end{equation}
where $\psi \in L^2(\partial\mathbb{D})$ is a solution of equation \eqref{eq0858mon}.

\medskip
We will show in Section~\ref{sec:solution} that equation \eqref{eq0858mon} admits a solution $\psi \in L^2(\partial\mathbb{D})$ for every $g \in L^2(\partial\mathbb{D})$ whenever $\abs{k} \neq 1$; see Section~\ref{sec4.5}.
When $\abs{k}=1$, there exists $g \in L^2(\partial\mathbb{D})$ for which no corresponding solution $\psi \in L^2(\partial\mathbb{D})$ exists; see Section~\ref{sec4.6}.

\section{Weak solutions and non-tangential convergence}		\label{sec:weak_sol}

In this section, we show that the function $u$ defined by the formula \eqref{eq1121mon}  indeed solves the Dirichlet problem \eqref{eq:dp_in_disk}.
More precisely, we prove that $u$ belongs to a weighted Sobolev space and, in particular, lies in $H^1_{\mathrm{loc}}(\mathbb{D})$, and \eqref{eq:u} holds in the weak formulation.
We further establish that $u(r,\cdot)\to g$ in $L^2(\partial\mathbb{D})$ as $r\nearrow 1$.
In addition, we prove the $L^2$ non-tangential maximal estimate for $u$ and show that $u$ converges to $g$ non-tangentially a.e. on $\partial\mathbb{D}$.

\subsection{Weighted energy estimates}
Let $g\in L^2(\partial\mathbb{D})$ and assume that there exists a function $\psi \in L^2(\partial\mathbb{D})$ satisfying \eqref{eq0858mon}, and let $\tilde{g}$ and $u$ be defined by the formula \eqref{eq1121mon}.
We establish the weighted energy estimate
\begin{equation}	\label{eq1217tue}
\int_{\mathbb{D}} (1-r)\,\abs{\nabla u(r,\theta)}^2\,r\,dr\,d\theta \le C \left(\norm{g}_{L^2(\partial\mathbb{D})}^2+ k^2\,\norm{\tilde{g}}_{L^2(\partial\mathbb{D})}^2\right).
\end{equation}
In particular, this bound implies that
$u\in H^1_{\mathrm{loc}}(\mathbb{D})$.
Indeed, this follows from combining \eqref{eq1217tue} with the auxiliary estimate
\begin{equation}		\label{eq1632tue}
\sup_{0<r<1}\,\norm{u(r,\cdot)}_{L^2(\partial\mathbb{D})}
\le C \left(\norm{g}_{L^2(\partial\mathbb{D})}+ \abs{k}\, \norm{\tilde{g}}_{L^2(\partial\mathbb{D})}\right),
\end{equation}
which is immediate from \eqref{eq1121mon}; see Section~\ref{sec3.4}.
To prove \eqref{eq1217tue}, define
\begin{equation}			\label{eq:uvw}
U(r,\theta):=\mathcal{P}_r\ast g(\theta),\quad
V(r,\theta):=\mathcal{P}_r\ast(\mathsf{s}\tilde{g})(\theta),\quad
W(r,\theta):=\mathcal{P}_r\ast \tilde{g}(\theta).
\end{equation}
With this notation, we may write
\begin{equation}		\label{eq2119tue}
u=U-kV+k\mathsf{s}W.
\end{equation}

Let
\[
\mathbb{D}_+:=\mathbb{D}\cap\{x>0\},\qquad
\mathbb{D}_-:=\mathbb{D}\cap\{x<0\},
\]
and denote the interface by
\[
\Gamma:=\mathbb{D}\cap\{x=0\}.
\]

In each of the half-disks $\mathbb{D}_\pm$, the functions $U$, $V$, and $\mathsf{s}W$ are harmonic and hence smooth. A possible jump across $\Gamma$ may arise from the discontinuity of $\mathsf{s}$, but the following structural lemma prevents this.

\begin{lemma}\label{lem2117mon}
Let $\tilde{h}=(\opI-\opR) h$ with $h\in L^2(\partial\mathbb{D})$.
Then, for all $0<r<1$,
\[
\mathcal{P}_r\ast \tilde{h} \left(\pm\tfrac{\pi}{2}\right)=0.
\]
Consequently, $\mathcal{P}_r\ast \tilde{h}$ vanishes on $\Gamma$.
\end{lemma}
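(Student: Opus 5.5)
The plan is to exploit the symmetry of the Poisson kernel under the reflection $\opR$. The first step is to note that $\opR$ is induced by the reflection $\theta\mapsto\pi-\theta$ of the circle, which in Cartesian coordinates is $(x,y)\mapsto(-x,y)$. Since $\mathcal{P}_r$ depends only on $\cos$ of the angular difference, it is even and commutes with this isometry:
\[
\mathcal{P}_r\ast(\opR f)(\theta)=\mathcal{P}_r\ast f(\pi-\theta).
\]
To check this, I would substitute $\phi\mapsto\pi-\phi$ in the convolution integral and use $\mathcal{P}_r(\theta-(\pi-\phi))=\mathcal{P}_r((\pi-\theta)-\phi)$, which follows from evenness. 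Alternatively, the identity is immediate on the Fourier side: $\opR$ sends $\hat f(n)$ to mode $-n$ with factor $(-1)^n$, while $\mathsf{P_r}$ multiplies by $r^{\abs{n}}$, which is symmetric in $n$. Either way we get $\mathsf{P_r}\opR=\opR\mathsf{P_r}$.

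The second step applies this to $\tilde h=(\opI-\opR)h$. The commutation gives
\[
\mathcal{P}_r\ast\tilde h(\theta)=\mathcal{P}_r\ast h(\theta)-\mathcal{P}_r\ast h(\pi-\theta).
\]
At $\theta=\pi/2$ we have $\pi-\theta=\theta$, so the difference vanishes. At $\theta=-\pi/2$ we have $\pi-\theta=3\pi/2\equiv-\pi/2 \pmod{2\pi}$, so it vanishes there as well. This holds for every $0<r<1$.

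The final step handles the "consequently" clause. The interface $\Gamma$ consists exactly of the points $re^{\pm i\pi/2}$ with $0<r<1$, together with the origin. At the origin the value is $\hat{\tilde h}(0)=\hat h(0)-\hat h(0)=0$; one can also get this by continuity, or by noting that $\opR$ preserves the mean since $\opR1=1$ and $\opR$ is measure preserving. Hence $\mathcal{P}_r\ast\tilde h$ vanishes on all of $\Gamma$.

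There is no real obstacle here. The only care needed is in justifying the commutation for general $L^2$ functions, and I would do this via Fourier coefficients, where everything converges absolutely for $r<1$.
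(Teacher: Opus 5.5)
Your proof is correct and follows essentially the same route as the paper: the commutation $\mathsf{P_r}\opR=\opR\mathsf{P_r}$ makes $\mathcal{P}_r\ast\tilde h$ antisymmetric under $\theta\mapsto\pi-\theta$, so it vanishes at the fixed points $\pm\pi/2$. The paper phrases this as $\opR\tilde h=-\tilde h$ rather than writing out the difference; your explicit check that the value at the origin is the mean of $\tilde h$, which is zero, is a small addition the paper leaves implicit.
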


\begin{proof}
Since $\opR(\opI-\opR)=-(\opI-\opR)$, we have
$\opR \tilde{h}=-\tilde{h}$.
The Poisson extension operator commutes with $\opR$, so $\mathcal{P}_r\ast \tilde{h}$ also satisfies
$\opR(\mathcal{P}_r\ast \tilde{h})=-\mathcal{P}_r\ast \tilde{h}$.
Hence $\mathcal{P}_r\ast \tilde{h}$ vanishes at $\theta=\pm\frac{\pi}{2}$, which correspond exactly to points on $\Gamma$.
\end{proof}

By the classical Littlewood--Paley theory, the Poisson extension $U$ satisfies
\[
\int_{\mathbb{D}} (1-r)\abs{\nabla U(r,\theta)}^2\,r\,dr\,d\theta
\le C\,\norm{g}_{L^2(\partial\mathbb{D})}^2,
\]
where $C>0$ is an absolute constant. Similarly,
\[
\int_{\mathbb{D}} (1-r)\abs{\nabla V(r,\theta)}^2\,r\,dr\,d\theta
\le C\,\norm{\mathsf{s}\tilde{g}}_{L^2(\partial\mathbb{D})}^2= C\,\norm{\tilde{g}}_{L^2(\partial\mathbb{D})}^2,
\]
where the final inequality follows from the fact that $\mathsf{s}^2=1$ a.e.

Next, take $h=\psi$ in Lemma~\ref{lem2117mon}.
Then $W=\mathcal{P}_r\ast \tilde{g}$ vanishes on $\Gamma$.
Since $\mathsf{s}=\pm1$ on $\mathbb{D}_\pm$, a standard gluing argument across $\Gamma$ yields
\begin{equation}			\label{eq2022mon}
\nabla(\mathsf{s}W)= \mathsf{s} \nabla W \quad \text{a.e. in } \mathbb{D}.
\end{equation}
Consequently,
\[
\int_{\mathbb{D}} (1-r)\abs{\nabla(\mathsf{s}W)(r,\theta)}^2\,r\,dr\,d\theta
=\int_{\mathbb{D}} (1-r)\abs{\nabla W(r,\theta)}^2\,r\,dr\,d\theta
\le C\,\norm{\tilde{g}}_{L^2(\partial\mathbb{D})}^2.
\]

Combining the estimates for $U$, $V$, and $W$, together with the decomposition
$u=U-kV+k\mathsf{s}W$, we obtain the weighted energy estimate \eqref{eq1217tue}.

\subsection{Verification of the transmission condition}	\label{sec3.2}
We show that $u$ is a weak solution of \eqref{eq:u}.
Since the weighted energy estimates give $u\in H^1_{\mathrm{loc}}(\mathbb{D})$ and $u$ is harmonic in each half-disk $\mathbb{D}_\pm$, it remains only to verify continuity of the conormal flux across $\Gamma$.
In Cartesian coordinates, the transmission condition is
\begin{equation}\label{eq1539tue}
[(\mathbf A\nabla u)\cdot \vec n]_\Gamma
=[u_x+k\mathsf{s}u_y]_\Gamma=0,
\end{equation}
where $[\,\cdot\,]_\Gamma$ denotes the jump across $\Gamma$.
For $U=\mathsf{P_r} g$, we have
\[
[U_x+k\mathsf{s}U_y]_\Gamma=2k U_y=2k\,\partial_y(\mathsf{P_r} g).
\]
Similarly, for $V=\mathsf{P_r}(\mathsf{s}\tilde{g})$,
\[
[V_x+k\mathsf{s}V_y]_\Gamma=2k V_y=2k\,\partial_y(\mathsf{P_r}(\mathsf{s}\tilde{g})).
\]
For $\mathsf{s}W=\mathsf{s}\mathsf{P_r} \tilde{g}$, note that $\partial_x(\mathsf{s}W)=\mathsf{s}W_x$ and $\partial_y(\mathsf{s}W)=\mathsf{s}W_y$ on each side of $\Gamma$. Hence,
\[
[(\mathsf{s}W)_x+k\mathsf{s}(\mathsf{s}W)_y]_\Gamma=2\mathsf{s} W_x=2\,\partial_x(\mathsf{P_r} \tilde{g}).
\]

Combining these expressions and using \eqref{eq2119tue}, we obtain
\[
[u_x+k\mathsf{s}u_y]_\Gamma
=2k\,\partial_y(\mathsf{P_r} g)
-2k^2\,\partial_y \left(\mathsf{P_r}(\mathsf{s}\tilde{g})\right)
+2k\,\partial_x(\mathsf{P_r} \tilde{g}).
\]

\medskip
Recall that the Hilbert transform on $\partial\mathbb{D}$ is defined by
\begin{equation}			\label{eq:hilbert_t}
\opH f(\theta)=\mathrm{p.v.}\,\frac{1}{2\pi}\int_{\partial\mathbb{D}}
f(\phi)\cot\frac{\theta-\phi}{2}\,d\phi.
\end{equation}
It is well known that $\opH$ satisfies the following property.
\[
\opH e_n = (-i \sgn n)\, e_{n},\qquad e_n(\theta):=e^{i n \theta},\qquad \sgn 0 =0.
\]

Since $\mathsf{P_r} \tilde{g}+i\mathsf{P_r}(\opH \tilde{g})$ is analytic in $\mathbb{D}$, the Cauchy--Riemann equations imply
\[
\partial_x(\mathsf{P_r} \tilde{g})=\partial_y(\mathsf{P_r}(\opH \tilde{g})).
\]
Substituting this identity into the flux expression yields
\[
[u_x+k\mathsf{s} u_y]_\Gamma =2k\,\partial_y(\mathsf{P_r}(g-k\mathsf{s}\tilde{g}+\opH \tilde{g})).
\]
Thus, it suffices to show that
\[
\partial_y\mathsf{P_r} \varphi=0 \quad\text{on } \Gamma, \qquad \text{where }\;\varphi :=g-k\mathsf{s} \tilde{g}+\opH \tilde{g}.
\]

\begin{lemma}\label{lem1431tue}
Let $\varphi \in L^2(\partial \mathbb{D})$ satisfy $(\opI - \opR) \opH \varphi =0$.
Then $\partial_y \mathsf{P_r} \varphi =0$ on $\Gamma$.
\end{lemma}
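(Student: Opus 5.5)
Since $\Gamma$ is the vertical diameter, i.e.\ the set of points $re^{\pm i\pi/2}$, the tangential derivative along $\Gamma$ in polar coordinates is $\partial_r$, and the normal derivative is $\partial_\theta$ (up to a factor $1/r$). At $\theta=\pm\pi/2$ we have $\partial_\theta=-r\,\partial_x\sin\theta+\dots$; more precisely, $\partial_r$ there equals $\pm\partial_y$. So the plan is to convert $\partial_y\mathsf{P_r}\varphi$ on $\Gamma$ into a statement about a harmonic conjugate, and then use the reflection symmetry supplied by the hypothesis.

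The first step uses the Cauchy--Riemann equations. The function $F=\mathsf{P_r}\varphi+i\mathsf{P_r}\opH\varphi$ is analytic in $\mathbb D$, so $\partial_y\mathsf{P_r}\varphi=-\partial_x\mathsf{P_r}(\opH\varphi)$. It therefore suffices to show that $\partial_x \mathsf{P_r}(\opH\varphi)$ vanishes on $\Gamma$.

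The second step handles the symmetry. Set $h:=\opH\varphi$. The hypothesis $(\opI-\opR)h=0$ says $\opR h=h$, i.e.\ $h(\pi-\theta)=h(\theta)$. Geometrically, $\theta\mapsto\pi-\theta$ is the reflection $(x,y)\mapsto(-x,y)$. As in the proof of Lemma~\ref{lem2117mon}, the Poisson extension commutes with $\opR$ (this is clear on Fourier modes, since $\opR e_n=(-1)^ne_{-n}$ and $\mathsf{P_r}$ multiplies by $r^{|n|}$). Hence $H(x,y):=\mathsf{P_r}h$ satisfies $H(-x,y)=H(x,y)$, so $H$ is even in $x$. Since $H$ is smooth (harmonic) in $\mathbb D$, it follows that $\partial_xH(0,y)=0$ for $|y|<1$, which is exactly the vanishing on $\Gamma$.

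There is no real obstacle here. The only points to double-check are the sign and normalization in the Cauchy--Riemann step, i.e.\ that $\mathsf{P_r}\opH\varphi$ is indeed the harmonic conjugate of $\mathsf{P_r}\varphi$ with the paper's convention $\opH e_n=-i\sgn(n)e_n$. This holds because $\mathsf{P_r}(\varphi+i\opH\varphi)=\hat\varphi(0)+2\sum_{n>0}\hat\varphi(n)z^n$ is analytic. A sign error would not matter anyway, since only vanishing is claimed. One also needs $\varphi\in L^2$, so that all the extensions are smooth harmonic functions inside $\mathbb D$ and the pointwise derivatives make sense.
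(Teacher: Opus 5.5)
Your proof is correct, but it takes a different route from the paper's.

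The paper works with $\varphi$ itself. Using $\opR\opH=-\opH\opR$, it rewrites the hypothesis as $\opH(\varphi+\opR\varphi)=0$. Since $\ker\opH=\operatorname{span}\{1\}$, this gives $\varphi+\opR\varphi=c$. Hence $v=\mathsf{P_r}\varphi$ satisfies $v(x,y)+v(-x,y)=c$, so $v\equiv c/2$ on $\Gamma$, and its tangential derivative $\partial_y v$ vanishes there.

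You instead read the hypothesis directly as $\opR$-evenness of $\opH\varphi$ and transfer it to the conjugate function $\mathsf{P_r}\opH\varphi$. The Cauchy--Riemann relation $\partial_y\mathsf{P_r}\varphi=-\partial_x\mathsf{P_r}\opH\varphi$ then turns the tangential derivative of $\mathsf{P_r}\varphi$ into the normal derivative of a function even in $x$, which vanishes on $x=0$. Your sign check on the conjugate is right, since $\mathsf{P_r}(\varphi+i\opH\varphi)=\hat\varphi(0)+2\sum_{n>0}\hat\varphi(n)z^n$.

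What each route buys:
\begin{itemize}
\item Your route avoids both the anticommutation relation and the identification of $\ker\opH$. It reuses the same Cauchy--Riemann step the paper already applies to $\tilde g$ just before the lemma.
\item The paper's route gives the explicit value $\mathsf{P_r}\varphi=c/2$ on $\Gamma$. It also mirrors the symmetry argument of Lemma~\ref{lem2117mon}.
\end{itemize}
The two are really the same fact: for an analytic $F$, $\Im F$ being even in $x$ is equivalent to $\Re F$ being odd in $x$ up to an additive constant.

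Your opening paragraph on polar derivatives is never used and can be dropped.
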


\begin{proof}
It follows from $\opR\opH=-\opH\opR$ that
\[
\opH(\varphi+\opR \varphi)=0,
\]
which implies $\varphi+\opR \varphi=c$ for some constant $c$, since $\ker \opH=\operatorname{span}\{1\}$.

Because $\mathsf{P_r}$ commutes with $\opR$, the function $v=\mathsf{P_r} \varphi$ satisfies
\[
v(x,y)+v(-x,y)=c
\quad\text{in } \mathbb{D}.
\]
Restricting to $\Gamma=\mathbb{D}\cap\{x=0\}$ yields $v(0,y)=c/2$, and hence $\partial_y v=0$ on $\Gamma$.
\end{proof}

By Lemma \ref{lem1431tue}, it only remains to show that $(\opI-\opR) \opH \varphi=0$.
Since $\tilde{g}=(\opI-\opR)\psi$ and $\psi$ satisfies \eqref{eq0858mon}, we have
\[
2 i \psi+ k \mathsf{B}(\mathsf{s}\tilde{g}) = \mathsf{B}g.
\]
Applying $\opI-\opR$ to the equation yields
\begin{equation}	\label{eq1508tue}
2i \tilde{g}+k(\opI-\opR)\mathsf{B}(\mathsf{s}\tilde{g})=(\opI-\opR)\mathsf{B} g.
\end{equation}

\begin{lemma}\label{lem:bandh}
The operators $\mathsf{B}$ and $\opH$ satisfy the identity
\[
(\opI-\opR)\mathsf{B}=i(\opI-\opR)\opH.
\]
\end{lemma}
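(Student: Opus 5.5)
Both sides are bounded linear operators on $L^2(\partial\mathbb{D})$, so it suffices to compare their action on the Fourier basis $e_n(\theta)=e^{in\theta}$, $n\in\mathbb{Z}$. The only facts we need are the formula $\opR e_n=(-1)^n e_{-n}$, which follows from $\opR f(\theta)=f(\pi-\theta)$, and $\opH e_n=-i\sgn(n)\,e_n$.

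We first compute $\mathsf{B}e_n$. For $n\ge 0$, $\opP^{+}e_n=e_n$ and $\opP^{-}e_n=0$, so $\mathsf{B}e_n=e_n$. For $n<0$, $\opP^{+}e_n=0$ and $\opP^{-}e_n=e_n$, so $\mathsf{B}e_n=\opR e_n=(-1)^n e_{-n}$. Applying $\opI-\opR$ in the case $n<0$ gives
\[
(\opI-\opR)\mathsf{B}e_n=(-1)^n e_{-n}-(-1)^n(-1)^{-n}e_{n}=(-1)^n e_{-n}-e_n=-(\opI-\opR)e_n,
\]
using $\opR e_{-n}=(-1)^{n}e_n$. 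In summary, $(\opI-\opR)\mathsf{B}e_n=\sgn(n)\,(\opI-\opR)e_n$ for $n\neq 0$, and $(\opI-\opR)\mathsf{B}e_0=(\opI-\opR)e_0=0$ because $\opR 1=1$.

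On the other side, $i(\opI-\opR)\opH e_n=i(-i\sgn n)(\opI-\opR)e_n=\sgn(n)\,(\opI-\opR)e_n$, which vanishes for $n=0$. The two sides therefore agree on every $e_n$. By linearity they agree on trigonometric polynomials, and by boundedness of $\opP^{\pm}$, $\opR$ and $\opH$ on $L^2$ they agree on all of $L^2(\partial\mathbb{D})$.

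There is no real obstacle. The only points needing care are the sign bookkeeping in $\opR e_n$ for negative $n$ and the mode $n=0$, where $\sgn 0=0$ has to be matched against the fact that $(\opI-\opR)$ annihilates constants.
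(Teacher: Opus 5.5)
Your proof is correct and is essentially the paper's argument carried out on the Fourier basis. Your computation for $n<0$ is the identity $(\opI-\opR)\opR=-(\opI-\opR)$ (that is, $\opR^2=\opI$) applied to $e_n$, and your sign comparison is the identity $(2\opP^{+}-\opI)f=i\opH f+\hat f(0)$; the paper uses these two facts at the operator level to reduce $(\opI-\opR)\mathsf{B}$ to $(\opI-\opR)(2\opP^{+}-\opI)$, with $\opI-\opR$ killing the constant term, so it needs no density step.
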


\begin{proof}
We first note that
\[
(\opI-\opR)\mathsf{B}=(\opI-\opR)(2\opP^{+}-\opI),
\]
which follows from $\mathsf{B}=\opP^{+}+\opR(\opI-\opP^{+})$ together with $\opR^2=\opI$.
Next, note that
\[
(2\opP^{+}-\opI)f=i\opH f+\hat f(0).
\]
Since $\opR1=1$, the constant term $\hat f(0)$ is annihilated by $\opI-\opR$. Therefore,
\[
(\opI-\opR)(2\opP^{+}-\opI)=i(\opI-\opR)\opH,
\]
which proves the lemma.
\end{proof}

By Lemma \ref{lem:bandh}, we obtain from \eqref{eq1508tue}
\begin{equation}	\label{eq1531wed}
2\tilde{g}+k(\opI-\opR)\opH(\mathsf{s}\tilde{g})=(\opI-\opR)\opH g.
\end{equation}
On the other hand, the function $\varphi=g-k \mathsf{s}\tilde{g}+\opH \tilde{g}$ satisfies
\begin{align*}
(\opI-\opR)\opH\varphi &=(\opI-\opR)\opH g-k(\opI-\opR)\opH(\mathsf{s}\tilde{g}) +(\opI-\opR)\opH^2 \tilde{g}\\
&=(\opI-\opR)\opH g-k(\opI-\opR)\opH(\mathsf{s}\tilde{g})-2\tilde{g},
\end{align*}
where we used the fact that $\tilde{g}=(\opI-\opR)\psi$ has zero average, and hence $\opH^2 \tilde g = -\tilde g$.
Indeed
\[
(\opI-\opR)\opH^2 \tilde{g}=-(\opI-\opR)\tilde{g}=-(\opI-\opR)^2 \psi=-2(\opI-\opR)\psi=-2\tilde{g}.
\]
Combining this with \eqref{eq1531wed} and Lemma~\ref{lem1431tue}, we find that $u$ satisfies the transmission condition \eqref{eq1539tue} and is therefore a weak solution of \eqref{eq:u}.

\subsection{Boundary convergence in $L^2$}		\label{sec3.4}
We show that
\[
\lim_{r \nearrow 1}\,\norm{u(r,\cdot)-g}_{L^2(\partial\mathbb{D})}=0.
\]
Recall the decomposition \eqref{eq2119tue},
\begin{equation}		\label{eq1138wed}
u=U-kV+k\mathsf{s}W =\mathsf{P_r} g -k \mathsf{P_r}(\mathsf{s}\tilde{g}) +k\mathsf{s} \mathsf{P_r} \tilde{g}.
\end{equation}

By the triangle inequality,
\[
\norm{u(r,\cdot)-g}_{L^2(\partial\mathbb{D})}
\le
\norm{\mathsf{P_r} g-g}_{L^2(\partial\mathbb{D})}
+\abs{k}\,\norm{\mathsf{P_r}(\mathsf{s}\tilde{g})-\mathsf{s}\mathsf{P_r} \tilde{g}}_{L^2(\partial\mathbb{D})}.
\]
Adding and subtracting $\mathsf{s}\tilde{g}$ and using $\abs{\mathsf{s}}=1$ a.e., we obtain
\begin{align*}
\norm{\mathsf{P_r}(\mathsf{s}\tilde{g})-\mathsf{s}\mathsf{P_r} \tilde{g}}_{L^2(\partial\mathbb{D})}
&\le \norm{\mathsf{P_r}(\mathsf{s}\tilde{g})-\mathsf{s}\tilde{g}}_{L^2(\partial\mathbb{D})} +\norm{\mathsf{s}(\tilde{g}-\mathsf{P_r} \tilde{g})}_{L^2(\partial\mathbb{D})} \\
&= \norm{\mathsf{P_r}(\mathsf{s}\tilde{g})-\mathsf{s}\tilde{g}}_{L^2(\partial\mathbb{D})} +\norm{\tilde{g}-\mathsf{P_r} \tilde{g}}_{L^2(\partial\mathbb{D})}.
\end{align*}
By the standard $L^2$ convergence of the Poisson integral, we obtain
\[
\lim_{r\nearrow 1}\, \norm{u(r,\cdot)-g}_{L^2(\partial\mathbb{D})}
\le 0+ \abs{k}\cdot (0 + 0) = 0.
\]
Hence $u(r,\cdot)\to g$ in $L^2(\partial\mathbb{D})$.

Finally, we note that \eqref{eq1632tue} follows directly from \eqref{eq1138wed}, $\abs{\mathsf{s}}=1$ a.e., and the contraction property of the operator $\mathsf{P_r}$ on $L^2(\partial\mathbb{D})$.

\subsection{Non-tangential convergence}
For $\phi \in \partial\mathbb{D}$, we define the non-tangential maximal function
\begin{equation}		\label{eq0854wed}
\mathcal{N}_*u(\phi):= \sup \left\{\abs{u(r,\theta)}:  re^{i\theta} \in \mathcal{A}_\beta(\phi) \right\},
\end{equation}
where the non-tangential approach region $\mathcal{A}_\beta(\phi)$ is given by
\begin{equation}		\label{eq0856wed}
\mathcal{A}_\beta(\phi):= \left\{ z \in \mathbb{C}:  \abs{z-e^{i\phi}} < \beta (1-\abs{z}) \right\}
\end{equation}
for some fixed aperture constant $\beta>1$.

We also define the Poisson non-tangential maximal function by
\[
\mathcal{P}_{\ast} g(\phi):= \sup \left\{\,\abs{\mathcal{P}_r\ast g(\theta)}:  re^{i\theta} \in \mathcal{A}_\beta(\phi) \right\}.
\]
It is well known (see, e.g., Stein \cite{Stein1970}) that
\begin{equation}		\label{eq0800wed}
\norm{\mathcal{P}_{\ast}g}_{L^2(\partial\mathbb{D})} \le C \norm{g}_{L^2(\partial\mathbb{D})},
\end{equation}       
where $C=C(\beta)$.
Recalling the decomposition \eqref{eq2119tue},
\begin{equation}		\label{eq0808wed}
u(r,\theta)=\mathcal{P}_r \ast g(\theta) -k \mathcal{P}_r \ast (\mathsf{s} \tilde{g})(\theta) +k\mathsf{s}(\theta)\, \mathcal{P}_r \ast \tilde{g}(\theta),
\end{equation}
we obtain the pointwise bound
\begin{equation}		\label{eq0803wed}
\abs{u(r,\theta)} \le \abs{\mathcal{P}_r \ast g(\theta)}+ \abs{k}\, \abs{\mathcal{P}_r \ast (s \tilde{g})(\theta)}+ \abs{k}\,\abs{\mathcal{P}_r \ast \tilde{g}(\theta)}.
\end{equation}
Taking the supremum in \eqref{eq0803wed} over $re^{i\theta} \in \mathcal{A}_\beta(\phi)$ yields
\begin{equation}		\label{eq0839wed}
\mathcal{N}_*u(\phi) \le \mathcal{P}_{\ast} g(\phi)+ \abs{k} \mathcal{P}_{\ast} (\mathsf{s} \tilde{g})(\phi)+ \abs{k}\,\mathcal{P}_{\ast} \tilde{g}(\phi).
\end{equation}
Therefore, by \eqref{eq0800wed} and $\abs{\mathsf{s}}=1$ a.e.,
\begin{equation*}
\norm{\mathcal{N}_*u}_{L^2(\partial\mathbb{D})} \le C\left(\norm{g}_{L^2(\partial\mathbb{D})} + \abs{k}\, \norm{\tilde{g}}_{L^2(\partial\mathbb{D})} \right),\qquad C=C(\beta)>0.
\end{equation*}

Finally, each of the Poisson integrals $\mathcal{P}_r \ast g$, $\mathcal{P}_r \ast (\mathsf{s} \tilde{g})$, and $\mathcal{P}_r \ast \tilde{g}$ in \eqref{eq0808wed} has a non-tangential limit almost everywhere, equal to its boundary datum. 
Since $\mathsf{s}$ is a bounded multiplier, it follows from \eqref{eq0808wed} that
\begin{equation}		\label{eq0841wed}
\lim_{\substack{re^{i\theta} \to e^{i\phi}\\ re^{i\theta} \in \mathcal{A}_\beta(\phi)}} u(r,\theta)  = g(\phi) -k (\mathsf{s}\tilde{g})(\phi) +k \mathsf{s}(\phi)\tilde{g}(\phi)=g(\phi),\quad \text{for a.e. }\, \phi \in \partial\mathbb{D}.
\end{equation}
This establishes the non-tangential convergence of $u$ to $g$ a.e. on $\partial \mathbb{D}$.

\subsection{$H^1$-regularity of the solution and the maximum principle}
We define the Sobolev spaces
\begin{equation}		\label{eq1732mon}
H^\alpha(\partial\mathbb{D})=\left\{f \in L^2(\partial\mathbb{D}): \sum_{n=-\infty}^\infty (1+\abs{n}^2)^\alpha \,\abs{\hat f(n)}^2 <\infty \right\},\qquad \alpha \in \mathbb{R}.
\end{equation}
Hereafter, we will use the following notational convention:
$g \in H^{1/2+}(\partial\mathbb{D})$ means that $g \in H^{1/2+\epsilon}(\partial\mathbb{D})$ for some $\epsilon>0$; that is,
\[
H^{1/2+}(\partial\mathbb{D}):=\bigcup_{\epsilon>0} H^{1/2+\epsilon}(\partial\mathbb{D}).
\]

The following fact is well known.
If $f \in H^{1/2}(\partial\mathbb{D})$, then its Poisson extension $v(r,\theta) :=\mathcal{P}_r \ast f(\theta)$ satisfies
\begin{equation}		\label{eq1025tue}
\norm{\nabla v}_{L^2(\mathbb{D})}^2 \asymp \int_{\mathbb D} \abs{\nabla v(r,\theta)}^2\,r dr d\theta \asymp \sum_{n=-\infty}^\infty \abs{n}\,\abs{\hat f(n)}^2 \lesssim  \norm{f}_{H^{1/2}(\partial\mathbb{D})}^2.
\end{equation}

The following lemma is a consequence of a standard gluing argument.
\begin{lemma}		\label{lem1051tue}
Let $\frac12 <\alpha <\frac32$ and let $f\in H^\alpha(\partial\mathbb{D})$ satisfy
\[
f(\tfrac{\pi}{2})=f(-\tfrac{\pi}{2})=0.
\]
Then $\mathsf{M_s} f = \mathsf{s}f \in H^\alpha(\partial\mathbb{D})$ and
\[
\norm{\mathsf{M_s} f}_{H^\alpha(\partial\mathbb{D})} \le C_\alpha \norm{f}_{H^\alpha(\partial\mathbb{D})}.
\]
In particular, when $\alpha=1$, one may take $C_1=1$, in which case equality holds with the $H^1$ norm defined by $\norm{f}_{H^1}^2:=\norm{f}_{L^2}^2+\norm{f'}_{L^2}^2$.
\end{lemma}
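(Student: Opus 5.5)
The plan is to localize the problem and reduce it to a one-dimensional multiplier estimate near the two jump points $\pm\frac{\pi}{2}$ of $\mathsf{s}$. Since $\alpha>\frac12$, Sobolev embedding gives $H^\alpha(\partial\mathbb{D})\subset C(\partial\mathbb{D})$ (indeed $C^{0,\alpha-1/2}$ for $\alpha<1$), so the pointwise conditions $f(\pm\frac{\pi}{2})=0$ make sense. We fix a smooth partition of unity $\chi_1+\chi_2+\chi_3=1$ on the circle. Here $\chi_1$ is supported near $\frac{\pi}{2}$, $\chi_2$ near $-\frac{\pi}{2}$, and $\chi_3$ vanishes near both points. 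Multiplication by smooth functions is bounded on $H^\alpha$. On the support of $\chi_3$ the function $\mathsf{s}$ agrees with a smooth function, so $\mathsf{s}\chi_3 f$ is controlled trivially. It therefore suffices to treat $\mathsf{s}\chi_1 f$; the case of $\chi_2$ is symmetric.

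Near $\frac{\pi}{2}$ we transplant to the real line via $x=\theta-\frac{\pi}{2}$. The function $h=\chi_1 f$ then becomes compactly supported, lies in $H^\alpha(\mathbb{R})$, and satisfies $h(0)=0$. Multiplication by $\mathsf{s}$ becomes multiplication by $-\sgn x$. The key step is the estimate
\[
\norm{\sgn(x)\,h}_{H^\alpha(\mathbb{R})}\le C_\alpha\norm{h}_{H^\alpha(\mathbb{R})}\quad\text{for } h\in H^\alpha(\mathbb{R}),\ h(0)=0,\ \tfrac12<\alpha<\tfrac32 .
\]
To prove it, we write $\sgn(x)h=2\mathbbm{1}_{x>0}h-h$, which reduces the claim to boundedness of $h\mapsto \mathbbm{1}_{x>0}h$ on $\{h\in H^\alpha: h(0)=0\}$. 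This is the classical fact that extension by zero $\tilde H^\alpha(\mathbb{R}_+)\to H^\alpha(\mathbb{R})$ is bounded. The relevant facts are as follows: for $\frac12<\alpha<\frac32$, $\alpha\neq 1$ excluded from nothing, the space $H^\alpha_0(\mathbb{R}_+)$ coincides with the functions in $H^\alpha(\mathbb{R}_+)$ vanishing at $0$ (a Lions--Magenes type result), and zero extension is bounded on it.

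For a self-contained route, we would handle $\alpha=1$ directly, then $\alpha\in(\frac12,1)$ and $\alpha\in(1,\frac32)$ separately.

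For $\alpha=1$: the function $\mathsf{s}f$ is continuous on the circle precisely because $f(\pm\frac{\pi}{2})=0$. It is absolutely continuous on each arc $I_{(\pm)}$, so it lies in $H^1$ with $(\mathsf{s}f)'=\mathsf{s}f'$ a.e. Hence $\abs{\mathsf{s}f}=\abs{f}$ and $\abs{(\mathsf{s}f)'}=\abs{f'}$ a.e., which gives equality of the $H^1$ norms and so $C_1=1$. The same gluing argument was used for \eqref{eq2022mon}.

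For $\frac12<\alpha<1$: we use the Gagliardo seminorm. In $\iint\abs{\mathsf{s}f(x)-\mathsf{s}f(y)}^2\abs{x-y}^{-1-2\alpha}$, the only new contributions come from pairs with $x,y$ on opposite sides of $0$. There the integrand is $\abs{f(x)+f(y)}^2\le 2\abs{f(x)}^2+2\abs{f(y)}^2$. Integrating out the other variable reduces these terms to a Hardy-type term $\int\abs{f(x)}^2\abs{x}^{-2\alpha}\,dx$. We then bound this by the fractional Hardy inequality for functions vanishing at $0$ with $\alpha>\frac12$: write $f(x)=f(x)-f(0)$ and apply the one-dimensional fractional Hardy inequality.

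For $1<\alpha<\frac32$: we apply the previous case to the derivative. By the $\alpha=1$ case, $(\mathsf{s}f)'=\mathsf{s}f'$ with $f'\in H^{\alpha-1}$, where $0<\alpha-1<\frac12$. Below $\frac12$, multiplication by $\mathbbm{1}_{x>0}$, and hence by $\sgn$, is bounded on $H^{\alpha-1}$ with no vanishing condition; this is again the fractional Hardy inequality, in its subcritical form. Combining these bounds with the $L^2$ part gives the lemma.

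The main obstacle is the fractional Hardy inequality $\int\abs{f(x)-f(0)}^2\abs{x}^{-2\alpha}dx\lesssim[f]_{H^\alpha}^2$ for $\frac12<\alpha<1$, together with its counterpart for $0<s<\frac12$. I would prove it via Fourier or Mellin methods, or cite Lions--Magenes and Triebel, noting that the constant blows up as $\alpha\to\frac12$ or $\alpha\to\frac32$, which explains the restriction to the open range.
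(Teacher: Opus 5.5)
Your proposal is correct and follows the paper's route. The paper also cites the Lions--Magenes gluing/zero-extension result (Theorem~11.4 there) for general $\alpha$, and it handles $\alpha=1$ by exactly your computation $(\mathsf{s}f)'=\mathsf{s}f'$ a.e., which gives $C_1=1$. Your optional self-contained argument is sound and fills in what the paper leaves to the citation: the fractional Hardy inequality for $\frac12<\alpha<1$ with $f(0)=0$, and, for $1<\alpha<\frac32$, the subcritical multiplier bound for $\mathbbm{1}_{x>0}$ on $H^{\alpha-1}$ applied to $f'$.
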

\begin{proof}
See Lions and Magenes \cite[Theorem~11.4]{LM1972} for the standard gluing argument.
When $\alpha=1$, we compute
\[
\norm{\mathsf{M_s} f}_{H^1(\partial\mathbb{D})}^2=\norm{\mathsf{s} f}_{L^2(\partial\mathbb{D})}^2+\norm{(\mathsf{s} f)'}_{L^2(\partial\mathbb{D})}^2=\norm{f}_{L^2(\partial\mathbb{D})}^2+\norm{f'}_{L^2(\partial\mathbb{D})}^2= \norm{f}_{H^1(\partial\mathbb{D})}^2,
\]
where we used $f(\pm \pi/2)=0$ to obtain $(\mathsf{s}f)'=\mathsf{s}f'$ a.e.
Thus, $C_1=1$.
\end{proof}

Let $g\in H^{1/2+}(\partial\mathbb{D})$ and assume that there exists a solution $\psi$ of the equation \eqref{eq0858mon} such that $\psi \in H^{1/2+}(\partial\mathbb{D})$.
Under this assumption, $\tilde{g}=(\opI-\opR)\psi$ satisfies the hypothesis of Lemma \ref{lem1051tue}.
Therefore, by Lemma \ref{lem1051tue}, we see that $\mathsf{s}\tilde{g} \in H^{1/2+}(\partial\mathbb{D})$.

Decompose $u=U-kV+k\mathsf{s}W$ with $U$, $V$, and $W$ defined by \eqref{eq:uvw}.
By \eqref{eq1025tue}, we find that $U$, $V$, and $W$ all belong to $H^1(\mathbb{D})$.
Moreover, $\mathsf{s}W \in H^1(\mathbb{D})$ and satisfies $\nabla(\mathsf{s}W)=\mathsf{s} \nabla W$ as in \eqref{eq2022mon}, since $W$ vanishes on $\Gamma$ by Lemma~\ref{lem2117mon}.

Therefore, $u \in H^1(\mathbb{D})$.
If $g$ and $\tilde{g}$ are both real-valued, then $u$ is real-valued.
Hence, by the classical De Giorgi--Moser--Nash theory, it follows that $u$ is the unique weak solution in $H^1(\mathbb{D})$ of the Dirichlet problem \eqref{eq:dp_in_disk}.
Moreover, $u$ extends continuously to $\overline{\mathbb{D}}$, and the weak maximum principle holds; see \cite[Chapter~8]{GT}.

\subsection{Summary}
We summarize the results established in this section.
\begin{theorem}		\label{thm1}
Let $g\in L^2(\partial\mathbb{D})$ and $k \in \mathbb{R}$.
Suppose there exists $\psi \in L^2(\partial\mathbb{D})$ satisfying \eqref{eq0858mon}, and define $\tilde{g}$ and $u$ by the formula \eqref{eq1121mon}.
Then $u$ enjoys the following properties:
\begin{enumerate}
\item \emph{(Energy estimates).} 
The function $u$ belongs to $H^1_{\mathrm{loc}}(\mathbb{D})$ and satisfies the weighted energy estimate
\[
\int_{\mathbb{D}} (1-r)\,\abs{\nabla u(r,\theta)}^2\,r\,dr\,d\theta \le C \left(\norm{g}_{L^2(\partial\mathbb{D})}^2+ k^2\,\norm{\tilde{g}}_{L^2(\partial\mathbb{D})}^2\right),
\]
where $C$ is an absolute constant.
\item \emph{(Weak solution).} 
The function $u$ is a weak solution of \eqref{eq:u}.
In particular, $u$ is harmonic in $\mathbb{D}_\pm$ and satisfies the conormal flux transmission condition across the interface $\Gamma$,
\[
[u_x+k\mathsf{s}u_y]_\Gamma=0.
\]
\item \emph{(Boundary convergence in $L^2$).}
The function $u$ attains the boundary value $g$ in the $L^2$-sense, namely,
\[
\lim_{r \nearrow 1}\,\norm{u(r,\cdot)-g}_{L^2(\partial\mathbb{D})}=0.
\]
\item \emph{(Non-tangential maximal estimate and a.e. convergence).}
The non-tangential maximal function $\mathcal{N}_*u$, defined in \eqref{eq0854wed}, satisfies
\[
\norm{\mathcal{N}_*u}_{L^2(\partial\mathbb{D})} \le C\left( \norm{g}_{L^2(\partial\mathbb{D})} + \abs{k}\, \norm{\tilde{g}}_{L^2(\partial\mathbb{D})}  \right),
\]
where $C=C(\beta)$ and $\beta$ is the aperture constant appearing in \eqref{eq0856wed}.
Moreover, $u$ converges to $g$ non-tangentially a.e. on $\partial\mathbb{D}$.
\item \emph{(Maximum principle).}
Suppose further that $g$ and $\tilde{g}$ both belong to $H^{1/2+}(\partial\mathbb{D})$ and are real-valued.
Then $u \in H^1(\mathbb{D})$ and $u$ satisfies the weak maximum principle.
\end{enumerate}
\end{theorem}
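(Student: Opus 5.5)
Theorem~\ref{thm1} is a summary of the preceding subsections, so the proof consists of collecting those arguments in order and checking that each uses only the hypothesis that $\psi\in L^2(\partial\mathbb{D})$ solves \eqref{eq0858mon}.

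For (1), I write $u=U-kV+k\mathsf{s}W$ as in \eqref{eq2119tue}. The Littlewood--Paley square-function bound for Poisson extensions controls $U$ and $V$, using $\norm{\mathsf{s}\tilde g}_{L^2}=\norm{\tilde g}_{L^2}$. Lemma~\ref{lem2117mon} applied with $h=\psi$ shows that $W$ vanishes on $\Gamma$. Hence $\nabla(\mathsf{s}W)=\mathsf{s}\nabla W$ a.e., as in \eqref{eq2022mon}, and the weighted integral of $\mathsf{s}W$ equals that of $W$. Combining these bounds with \eqref{eq1632tue} gives $u\in H^1_{\rm loc}(\mathbb{D})$.

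For (2), $u$ is harmonic in $\mathbb{D}_\pm$, so only the flux jump across $\Gamma$ needs checking. Its computation reduces to $2k\,\partial_y\mathsf{P_r}\varphi$ with $\varphi=g-k\mathsf{s}\tilde g+\opH\tilde g$.

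\begin{itemize}
\item Lemma~\ref{lem:bandh} converts the equation obtained by applying $\opI-\opR$ to \eqref{eq0858mon} into \eqref{eq1531wed}.
\item Since $\tilde g$ has mean zero, $\opH^2\tilde g=-\tilde g$.
\item These two facts give $(\opI-\opR)\opH\varphi=0$.
\item Lemma~\ref{lem1431tue} then shows the jump vanishes.
\end{itemize}

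The main obstacle is this step: one must track the mean-zero property and the $\opR$-antisymmetry of $\tilde g$ carefully, because the constant mode is exactly where $\mathsf{B}$ and $i\opH$ differ.

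For (3) and (4), \eqref{eq1138wed} shows that $u-g$ equals $\mathsf{P_r}g-g$ plus $k$ times the difference of two Poisson approximations, $\mathsf{P_r}(\mathsf{s}\tilde g)-\mathsf{s}\tilde g$ and $\mathsf{s}(\mathsf{P_r}\tilde g-\tilde g)$. Each of these tends to $0$ in $L^2$. The pointwise bound \eqref{eq0803wed}, together with the $L^2$ boundedness \eqref{eq0800wed} of the Poisson non-tangential maximal function, gives the estimate for $\mathcal{N}_*u$. Fatou's theorem for each of the three Poisson integrals gives the non-tangential limit $g-k\mathsf{s}\tilde g+k\mathsf{s}\tilde g=g$ a.e.

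For (5), I use that $\tilde g=(\opI-\opR)\psi$ and assume $\tilde g\in H^{1/2+}$, which implies $\tilde g$ is continuous.

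\begin{itemize}
\item Because $\tilde g$ is continuous and odd under $\opR$, it vanishes at $\pm\pi/2$.
\item Lemma~\ref{lem1051tue} then gives $\mathsf{s}\tilde g\in H^{1/2+}$.
\item By \eqref{eq1025tue}, $U$, $V$ and $W$ all lie in $H^1(\mathbb{D})$, and so does $\mathsf{s}W$.
\item Therefore $u\in H^1(\mathbb{D})$ is a real weak solution.
\end{itemize}

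The weak maximum principle for divergence-form operators with bounded coercive coefficients (Gilbarg--Trudinger, Chapter~8) then applies, since $\xi\cdot\mathbf{A}\xi=\abs{\xi}^2$.
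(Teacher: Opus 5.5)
Your proposal is correct and follows the paper's argument essentially step by step: the decomposition $u=U-kV+k\mathsf{s}W$ with Lemma~\ref{lem2117mon} for (1), the flux reduction via Lemmas~\ref{lem:bandh} and~\ref{lem1431tue} for (2), Poisson approximation and the Poisson non-tangential maximal bound for (3)--(4), and Lemma~\ref{lem1051tue} together with \eqref{eq1025tue} for (5). The only minor deviation is in (5). There you obtain $\tilde g(\pm\pi/2)=0$ directly from the continuity and $\opR$-oddness of $\tilde g$ under the stated hypothesis $\tilde g\in H^{1/2+}$, whereas the paper's text phrases the assumption through $\psi\in H^{1/2+}$; your version matches the theorem's hypothesis more closely.
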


\section{Solvability of the boundary equation \eqref{eq0858mon}} \label{sec:solution}

In this section, we study the solvability of the boundary equation \eqref{eq0858mon}.
We equip $L^2=L^2(\partial\mathbb{D})$ with the inner product
\[
\ip{f,g}=\frac{1}{2\pi} \int_{-\pi}^{\pi} f(\theta)\overline{g(\theta)}\,d\theta.
\]
Let $H_{\rm Hardy}^2$ denote the classical Hardy space
\[
H_{\rm Hardy}^2=\{ f \in L^2(\partial\mathbb{D}):  \hat f(n)=0, \;  \forall n<0 \}.
\]
Here and below, we use the notation
\[
\hat f(n)= \ip{f, e_n},\qquad e_n=e_n(\theta)=e^{in\theta}.
\]

\subsection{Basic setup}

Recall that the operator $\mathsf{T}$ is defined by
\[
\mathsf T:=\mathsf B \mathsf{M_s}(\opI-\opR),
\]
where $\mathsf{M_s}$ denotes multiplication by $\mathsf{s}(\theta)=\sgn(\cos\theta)$ and $\mathsf{B}$ is given by
\[
\mathsf B:=\opP^{+} + \opR\opP^{-},
\]
with $\opP^{+}$ and $\opP^{-}$ the Hardy projections,
\[
\opP^{+}f=\sum_{n=0}^\infty \hat f(n)e_n,\qquad \opP^{-}=\opI-\opP^{+}.
\]
The operator $\opR$ is the unitary reflection
\[
(\opR f)(\theta)=f(\pi-\theta),
\]
which equivalently satisfies
\[
\opR e_n = (-1)^n e_{-n}.
\]
We recall that the Hilbert transform $\opH$ satisfies
\begin{equation}		\label{eq1703fri}
\opH f= \sum_{n=-\infty}^\infty \hat f(n) (-i \sgn n) e_n,\qquad \sgn 0=0.
\end{equation}

\subsection{Analysis of the operator $\mathsf{B}$}
Recalling that $\opR^2=\opI$, we define the projection operators
\begin{equation}	\label{eq1112sun}
\mathsf{\Pi}_+:=\tfrac12 (\opI+\opR),\qquad \mathsf{\Pi}_-:=\tfrac12 (\opI-\opR),
\end{equation}
and the symmetric and antisymmetric subspaces
\[
L^2_+:=\ran \mathsf{\Pi}_+=\ker(\opI-\opR),\qquad L^2_-:=\ran \mathsf{\Pi}_-=\ker(\opI+\opR),
\]
so that $L^2 = L^2_- \oplus L^2_+$ is an orthogonal decomposition.

\begin{lemma}	\label{lem:B_isometry}
We have
\begin{equation}			\label{eq0409mon}
\mathsf{B}f = f+i \opH f\quad \text{for }\;f \in L^2_+ \qquad\text{and}\qquad \mathsf{B}f =0\quad\text{for }\; f \in L^2_-.
\end{equation}
Moreover, $\mathsf{B}\vert_{L^2_+}: L^2_+ \to H_{\rm Hardy}^2$ is a bijection with a bounded inverse
\begin{equation}		\label{eq1351mon}
\mathsf{B}\vert_{L^2_+}^{-1}=\mathsf{\Pi}_+ \vert_{H_{\rm Hardy}^2}.
\end{equation}
\end{lemma}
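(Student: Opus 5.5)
The whole argument can be carried out on Fourier modes. Write $\opR e_n=(-1)^n e_{-n}$, $\opP^{+}e_n=e_n$ for $n\ge 0$, and $\opP^{-}e_n=e_n$ for $n<0$. For $f\in L^2_+$ the relation $\opR f=f$ means $\hat f(-n)=(-1)^n\hat f(n)$. Conversely, $f\in L^2_-$ means $\hat f(-n)=-(-1)^n\hat f(n)$, and in particular $\hat f(0)=0$.

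\textbf{The two formulas in \eqref{eq0409mon}.} By definition,
\[
\mathsf B f=\opP^{+}f+\opR\opP^{-}f=\sum_{n\ge0}\hat f(n)e_n+\sum_{n<0}\hat f(n)(-1)^n e_{-n}.
\]
In the second sum I substitute $m=-n>0$. Its coefficient at $e_m$ becomes $(-1)^m\hat f(-m)$.
\begin{itemize}
\item If $f\in L^2_+$, this coefficient equals $\hat f(m)$. Hence $\mathsf Bf=\hat f(0)+2\sum_{m>0}\hat f(m)e_m$. Comparing with \eqref{eq1703fri}, $f+i\opH f=\hat f(0)+2\sum_{m>0}\hat f(m)e_m$, which gives the first identity.
\item If $f\in L^2_-$, the coefficient equals $-\hat f(m)$, so all modes $m>0$ cancel. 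Since $\hat f(0)=0$ as well, $\mathsf Bf=0$.
\end{itemize}

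\textbf{Bijectivity onto $H^2_{\rm Hardy}$.} From the computation above, $\mathsf B$ maps $L^2_+$ into $H^2_{\rm Hardy}$.
\begin{itemize}
\item \emph{Injectivity.} If $\mathsf Bf=0$ with $f\in L^2_+$, then $\hat f(m)=0$ for all $m\ge0$. The symmetry $\hat f(-n)=(-1)^n\hat f(n)$ then kills the negative modes too, so $f=0$.
\item \emph{Inverse.} Take $h\in H^2_{\rm Hardy}$. Then $\Pi_+h=\tfrac12(h+\opR h)$ lies in $L^2_+$, and its coefficients are $\hat h(0)$ at $n=0$, $\tfrac12\hat h(m)$ at $m>0$, and $\tfrac12(-1)^m\hat h(m)$ at $-m$. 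Applying the formula for $\mathsf B$ on $L^2_+$ gives $\mathsf B\Pi_+h=\hat h(0)+2\sum_{m>0}\tfrac12\hat h(m)e_m=h$. So $\mathsf B\vert_{L^2_+}$ is onto, and its inverse is $\Pi_+\vert_{H^2_{\rm Hardy}}$, which has norm at most $1$. For consistency one can also check directly that $\Pi_+\mathsf Bf=f$ for $f\in L^2_+$, using the same coefficient bookkeeping.
\end{itemize}

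\textbf{Main obstacle.} There is no real obstacle. The only point needing care is the sign bookkeeping $(-1)^n$ when moving negative modes to positive ones under $\opR$, together with the role of the zero mode, which is handled by $\opR1=1$ and $\sgn 0=0$.
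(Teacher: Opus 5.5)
Your proof is correct and is essentially the paper's argument written out in Fourier coefficients. The paper derives $\mathsf{B}f=\tfrac12(f+\opR f)+\tfrac{i}{2}(\opH f+\opH\opR f)$ by expressing $\opP^{\pm}$ through $\opH$ and $\hat f(0)$ and using $\opR\opH=-\opH\opR$; it gets injectivity from $f=-i\opH f\in L^2_+\cap L^2_-$ and surjectivity from $\mathsf{B}h=\mathsf{B}\opR h=h$ for $h\in H^2_{\rm Hardy}$, and your mode-by-mode bookkeeping checks these same identities directly, so nothing is missing.
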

\begin{proof}
From the definition of $\opP_+$ and $\opP_-$, we have
\[
\opP^+ f =\frac12 \left(f+ i \opH f +\hat f(0) \right),\quad 
\opP^- f = \frac12 \left(f- i \opH f -\hat f(0) \right),\qquad f \in L^2.
\]
Therefore, we have
\begin{equation}			\label{eq1324mon}
\mathsf{B} f =\frac12(f+ \opR f)+\frac12 i (\opH f + \opH \opR f).
\end{equation}
Here, we used
\[
\opR 1=1,\qquad \opR \opH = - \opH \opR.
\]
Since we have $f=\opR f$ for $f \in L^2_+$ and $f=-\opR f$ for $f \in L^2_-$, we obtain \eqref{eq0409mon} from \eqref{eq1324mon}.

\smallskip
Next, it is clear that $\ran \mathsf{B}=H_{\rm Hardy}^2$.
Suppose $f \in L^2_+$.
Then $\opH f \in L^2_-$ since
\[
\opR \opH f =- \opH \opR f = -\opH f.
\]
Hence, if $f \in L^2_+$ and $\mathsf{B} f=0$, then it follows from $f+i \opH f=0$ that $f = - i \opH f \in L^2_-$.
Since $L^2_+ \cap L^2_- = \{0\}$, we must have $f=0$.
This shows that $\mathsf{B}\vert_{L^2_+}$ is one-to-one.

It remains to show $\mathsf{B}\vert_{L^2_+}$ is onto $H_{\rm Hardy}^2$ and to establish \eqref{eq1351mon}.
Note that
\[
\opP^+ h=h,\quad \opP^-h=0, \quad \opP^+ \opR h= \hat h(0), \quad \opR \opP^-\opR h =h -\hat h(0),\qquad \forall h \in H_{\rm Hardy}^2.
\]
Therefore, we have $\mathsf{B} h =h $ and $\mathsf{B} \opR h = h$ for $h \in H_{\rm Hardy}^2$.
Consequently, $\mathsf{B} \mathsf{\Pi}_+h=h$ for $h \in H_{\rm Hardy}^2$.
It is clear that $\mathsf{\Pi}_+h \in L^2_+$.
\end{proof}

\subsection{Reduction to  $L^2_+$}

Suppose $\psi \in L^2$ satisfies \eqref{eq0858mon}, that is,
\begin{equation}	\label{eq1555mon}		
2i \psi+ k \mathsf{B} \mathsf{M_s}(\opI-\opR)\psi = \mathsf{B} g.
\end{equation}
Then $\psi \in H^2_{\rm Hardy}$ as $2i\psi \in \ran \mathsf{B}= H^2_{\rm Hardy}$.
Therefore, by Lemma \ref{lem:B_isometry}, we may write
\[
\psi=\mathsf{B} y=y+i\mathsf{H}y,\qquad y \in L^2_+.
\]
Then, using $y=\opR y$ and $\opR \opH = -\opH \opR$, we have
\begin{equation}		\label{eq1825fri}
(\opI-\opR)\psi=(\opI-\opR)(y+i\opH y)=2i\opH y.
\end{equation}
Therefore, $\psi \in L^2$ is a solution of \eqref{eq1555mon} if and only if $y \in L^2_+$ is a solution of
\[
2i( \mathsf{B}y+k\mathsf{B} \mathsf{M_s} \opH y)=\mathsf{B}g.
\]
By applying $\mathsf{B}\vert_{L^2_+}^{-1}$ to the equation above, we find that there is a one-to-one correspondence between $\psi \in H^2_{\rm Hardy}$ solving \eqref{eq1555mon} and $y \in L^2_+$ solving
\begin{equation}		\label{eq1601mon}
y+k \mathsf{M_s} \opH y = \frac{1}{2i}\,\mathsf{\Pi}_+g,
\end{equation}
where we used Lemma \ref{lem:B_isometry} to get
\[
\mathsf{B}g=\mathsf{B}(\mathsf{\Pi}_+g + \mathsf{\Pi}_- g)= \mathsf{B} \mathsf{\Pi}_+ g.
\]
Let us define the operator $\mathsf{C}$ on $L^2_+$ as follows:
\[
\mathsf{C}= \mathsf{M_s} \opH \vert_{L^2_+}.
\]
This is well-defined since
\begin{equation}	\label{eq1609mon}
\mathsf{M_s}: L^2_- \to L^2_+,\qquad \opH: L^2_+ \to L^2_-.
\end{equation}

\subsection{Spectrum of the operator $\mathsf{C}$}

\begin{lemma}			\label{lem:C_spectrum}
The spectrum of $\mathsf{C}$ is the set $\{ z \in \mathbb{C}: \abs{z} \le 1\}$, that is, $\sigma(\mathsf{C})= \overline{\mathbb{D}}$.
\end{lemma}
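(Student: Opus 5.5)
We prove the two inclusions $\sigma(\mathsf C)\subset\overline{\mathbb D}$ and $\mathbb D\subset\sigma(\mathsf C)$ separately; closedness of the spectrum then gives equality. The upper bound is a norm estimate. $\mathsf{M_s}$ is unitary on $L^2$ because $\mathsf s^2=1$ a.e. By \eqref{eq1703fri}, $\norm{\opH f}_{L^2}\le\norm{f}_{L^2}$. Hence $\norm{\mathsf C}\le 1$, and the spectral radius bound gives $\sigma(\mathsf C)\subset\overline{\mathbb D}$. By \eqref{eq1609mon}, $\mathsf C$ is a bounded operator of $L^2_+$ into itself, so this is meaningful.

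For the reverse inclusion it suffices to show that every $z$ with $\abs z<1$ is an eigenvalue of $\mathsf C$ or of $\mathsf C^*$; either way $z$ (or $\bar z$, by symmetry of the argument) lies in $\sigma(\mathsf C)$. I plan to construct explicit eigenfunctions through a scalar Riemann--Hilbert problem.

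\emph{Reformulation.} If $y\in L^2_+$ solves $\mathsf{M_s}\opH y=zy$, then $\opH y=z\mathsf s y$. Let $F$ be the analytic function in $\mathbb D$ with boundary values $F_+=y+i\opH y$, up to the constant $\hat y(0)$. Let $G$ be the antianalytic function with boundary values $G=y-i\opH y$. On $\partial\mathbb D$ we get $F_+=(1+iz\mathsf s)y$ and $G=(1-iz\mathsf s)y$, so
\[
F_+=\frac{1+iz\mathsf s}{1-iz\mathsf s}\,G .
\]
This is a jump problem with a coefficient that is piecewise constant, with jumps only at $\theta=\pm\frac{\pi}{2}$, the points $\pm i$. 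Conversely, a pair $(F,G)$ with these boundary relations and $L^2$ boundary traces produces $y=\frac12(F_++G)$ with $\opH y=z\mathsf s y$.

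\emph{Solving the jump problem.} I will solve it with a factor of the type
\[
X(w)=\Big(\frac{1-iw}{1+iw}\Big)^{\mu},
\]
as foreshadowed in the introduction. Here $\mu$ is determined by
\[
e^{i\pi\mu}\sim\frac{1+iz}{1-iz}\Big/\frac{1-iz}{1+iz}=\Big(\frac{1+iz}{1-iz}\Big)^2 ,
\]
with the precise normalization to be fixed by the computation. For $\abs z<1$ the Möbius map $z\mapsto\frac{1+iz}{1-iz}$ sends $\mathbb D$ into the right half-plane, so its argument lies in $(-\frac{\pi}{2},\frac{\pi}{2})$. Hence one can choose $\mu$ with $\operatorname{Re}\mu\in(-\frac12,\frac12)$. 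The boundary singularities of the resulting candidate are then $\abs{\theta\mp\frac{\pi}{2}}^{\pm\operatorname{Re}\mu}$, which are square integrable. This is the key place where $\abs z<1$ is used, and it also explains why the argument breaks down on $\abs z=1$.

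\emph{Symmetry and the main obstacle.} It remains to check that the resulting $y$ is nonzero and lies in $L^2_+$, that is, $\opR y=y$. I expect this to be the main obstacle. The conjugation $w\mapsto -\bar w$ realizes $\opR$ and swaps the roles of $F$ and $G$ (recall $\opR\opH=-\opH\opR$). So I would first try replacing $y$ by $\mathsf\Pi_+y$. This preserves the equation, because both $\mathsf{M_s}\opH$ and $\opR$ respect it: $\opR\mathsf{M_s}=\mathsf{M_s}\opR$ and $\opR\opH=-\opH\opR$, so $\mathsf{M_s}\opH$ commutes with $\opR$ on the relevant pieces. One must then check that $\mathsf\Pi_+y\neq0$.

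If symmetrization kills the solution for some $z$, I would treat $\mathsf C^*$ instead. On $L^2_+$ we have $\mathsf C^*=-\mathsf\Pi_+\opH\mathsf{M_s}$, which leads to the analogous jump problem with $\mu$ replaced by $-\mu$. This shows that $z-\mathsf C$ is not surjective. In either case $\mathbb D\subset\sigma(\mathsf C)$, and taking closures yields $\sigma(\mathsf C)=\overline{\mathbb D}$.
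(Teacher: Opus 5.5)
\textbf{Gap in the lower inclusion.} Your bound $\norm{\mathsf C}\le 1$ is the paper's upper inclusion, and that part is fine. The reverse inclusion $\mathbb D\subset\sigma(\mathsf C)$ is the real content of the lemma, and it is not established: no eigenfunction is ever produced or verified. The exponent is left ``to be fixed by the computation.'' The relation you do write, $e^{i\pi\mu}\sim\bigl(\frac{1+iz}{1-iz}\bigr)^2$, would only give $\abs{\Re\mu}<1$. That is inconsistent with your claim $\Re\mu\in(-\frac12,\frac12)$ and loses square integrability near $\abs{z}=1$. The relation that actually solves the jump problem comes from matching $X_+=w\,e^{-i\pi\mu\mathsf s/2}$, with $w=\bigl(\frac{1+\sin\theta}{1-\sin\theta}\bigr)^{\mu/2}$, against $F_+=(1+iz\mathsf s)y$. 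It is $\frac{1+iz}{1-iz}=e^{-i\pi\mu}$, i.e.\ $z=-\tan(\pi\mu/2)$.

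Your converse is also false as stated. If $F_+=cG$ and $y=\frac12(F_++G)$, a Fourier computation gives $\opH y=z\mathsf s y+\frac{i}{2}\bigl(F(0)-G(\infty)\bigr)$. So you must additionally check that $F_+$ and $G$ have equal means. Since $F_+-G=2iz\mathsf s y$, this is exactly the condition $z\ip{\mathsf s y,1}=0$.

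\textbf{The symmetry issue and how the route closes.} The ``main obstacle'' you anticipate does not arise, and your symmetrization step rests on a wrong identity. If you carry out the factorization, $y=F_+/(1+iz\mathsf s)$ with $F_+=\kappa X_+$ is a constant multiple of $w$. This is because $e^{-i\pi\mu\mathsf s/2}/(1+iz\mathsf s)$ takes the same value on both arcs precisely when $e^{-i\pi\mu}=\frac{1+iz}{1-iz}$. Since $w(\pi-\theta)=w(\theta)$, this $y$ lies in $L^2_+$. Then $\mathsf s y\in L^2_-$ has mean zero, so the compatibility condition holds automatically. Neither $\mathsf{\Pi}_+$ nor $\mathsf C^*$ is needed, and the fallback through $\mathsf C^*$ is not an argument as written.

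Note also that $\opR\mathsf{M_s}=\mathsf{M_s}\opR$ is false. Since $\mathsf s(\pi-\theta)=-\mathsf s(\theta)$, one has $\opR\mathsf{M_s}=-\mathsf{M_s}\opR$. Only this sign, combined with $\opR\opH=-\opH\opR$, makes $\mathsf{M_s}\opH$ commute with $\opR$; your identity would make it anticommute.

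Completed this way, your route lands on the paper's eigenfunctions $f_\mu=w$. You still owe the verification that $(1+iz\mathsf s)w$ and $(1-iz\mathsf s)w$ are traces of functions in $H^2(\mathbb D)$ and its conjugate, respectively. This holds: they are constant multiples of $X_+$ and of the complex conjugate of $X_+$ computed with exponent $\bar\mu$, and $X\in H^2(\mathbb D)$ for $\abs{\Re\mu}<\frac12$.

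The paper avoids all of this by direct computation. For $f\in L^2_+$ and $\abs{\theta}<\frac{\pi}{2}$, it writes $\mathsf Cf(\theta)=\frac{\cos\theta}{\pi}\,\mathrm{p.v.}\int_{-\pi/2}^{\pi/2}\frac{f(\phi)}{\sin\theta-\sin\phi}\,d\phi$. The substitution $x=\tan(\frac{\theta}{2}+\frac{\pi}{4})$, together with $\mathrm{p.v.}\int_0^\infty\frac{s^{a-1}}{1-s}\,ds=\pi\cot(\pi a)$, then gives $\mathsf Cf_\mu=-\tan(\frac{\pi\mu}{2})f_\mu$. Since $\mu\mapsto-\tan(\pi\mu/2)$ maps the strip $\abs{\Re\mu}<\frac12$ onto $\mathbb D$, every point of $\mathbb D$ is an eigenvalue.
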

\begin{proof}
Let $f \in L^2_+$.
By the symmetry, we may restrict to $\theta \in (-\frac{\pi}{2}, \frac{\pi}{2})$, where $\mathsf{s}=1$.
Using \eqref{eq:hilbert_t}, together with the identity $\cot(\theta-\frac{\pi}{2})=-\tan \theta$, it follows that
\[
\mathsf{C} f(\theta)= \frac{1}{2\pi}\,\mathrm{p.v.}\int_{-\pi/2}^{\pi/2}
f(\phi)\left(\cot\frac{\theta-\phi}{2}-\tan\frac{\theta+\phi}{2}\right)\,d\phi,\qquad \theta \in (-\tfrac{\pi}{2},\tfrac{\pi}{2}).
\]
Using the elementary identity
\begin{equation}		\label{eq1225thu}
\cot\left(\frac{\theta-\phi}{2}\right) - \tan\left(\frac{\theta+\phi}{2}\right) = \frac{2\cos\theta}{\sin\theta-\sin\phi},
\end{equation}
we obtain the representation
\begin{equation}		\label{eq0854thu}
\mathsf{C} f(\theta)=\frac{\cos\theta}{\pi}\,\mathrm{p.v.} \int_{-\pi/2}^{\pi/2}\frac{f(\phi)}{\sin\theta-\sin \phi}\,d\phi,\qquad -\frac{\pi}{2}<\theta< \frac{\pi}{2}.
\end{equation}

We claim that the function $f_\mu$ defined by
\[
f_\mu(\theta)=\left( \frac{1+\sin\theta}{1-\sin\theta}\right)^{\mu/2},\qquad \Re \mu \in (-\tfrac12,\tfrac12)
\]
is an eigenfunction of $\mathsf{C}$ corresponding to the eigenvalue $\lambda=-\tan (\pi \mu/2)$.

It is straightforward to verify that $f_\mu \in L^2$ when $\abs{\Re \mu} <\frac12$.
Indeed, note that
\begin{align*}
1-\sin \theta &\asymp \tfrac12 (\theta-\pi/2)^2,\qquad 1+\sin\theta \asymp 2 \qquad\text{near }\;\pi/2,\\
1+\sin\theta & \asymp \tfrac12(\theta+\pi/2)^2 ,\qquad 1-\sin \theta \asymp 2 \qquad\text{near }\;-\pi/2.
\end{align*}
Moreover, since $f_\mu(\pi-\theta)=f_\mu(\theta)$, we have $f_\mu \in L^2_+$.

\medskip
To compute $\mathsf{C} f_\mu$, we use the change of variables,
\[
\theta=2u-\tfrac{\pi}{2},\quad x=\tan u,\quad \phi=2v-\tfrac{\pi}{2},\quad y=\tan v,
\]
so that
\[
\sin \theta= -\cos 2u= -\frac{1-x^2}{1+x^2},\quad \cos\theta=\sin 2u= \frac{2x}{1+x^2},\quad \frac{1+\sin\theta}{1-\sin\theta}=x^2,
\]
and similarly
\[
\sin \phi= -\frac{1-y^2}{1+y^2},\quad \cos\phi= \frac{2y}{1+y^2},\quad d\phi=\frac{2dy}{1+y^2}.
\]
With this change of variables, we obtain
\[
\mathsf{C} f_\mu(\theta)= \frac{2x}{\pi}\,\mathrm{p.v.} \int_0^\infty \frac{y^{\mu}}{x^2-y^2}\,dy.
\]
With the further changes of variables $y=xt$ and then $s=t^2$, this integral becomes
\[
\mathsf{C} f_\mu(\theta)= \frac{2}{\pi}\, x^{\mu}\,\mathrm{p.v.} \int_0^\infty \frac{t^{\mu}}{1-t^2}\,dt=
\frac{1}{\pi}\, x^{\mu}\, \mathrm{p.v.} \int_0^\infty \frac{s^{(\mu-1)/2}}{1-s}\,ds.
\]
We use the known principal value identity
\[
\mathrm{p.v.} \int_0^\infty \frac{s^{a-1}}{1-s}\,ds=\pi \cot(\pi a),\qquad 0< \Re a <1
\]
to finally obtain
\begin{equation}	\label{eq1815fri}
\mathsf{C} f_\mu=- \tan (\pi \mu/2) f_\mu.
\end{equation}

As the image of the strip $\abs{\Re z} < \frac12$ under the mapping $z \mapsto -\tan (\pi z/2)$ is the open unit disk and the spectrum is a closed set, we have
\[
\{z \in \mathbb{C}: \abs{z} \le 1\} \subset \sigma (\mathsf{C}). 
\]
On the other hand, for every $f \in L^2_+$, we have
\[
\int_{-\pi}^\pi \abs{\mathsf{C} f(\theta)}^2\,d\theta=\int_{-\pi}^\pi \abs{\mathsf{M}_{\mathsf s} \opH f(\theta)}^2\,d\theta=
\int_{-\pi}^\pi \abs{\opH f(\theta)}^2\,d\theta \le \int_{-\pi}^\pi \abs{f(\theta)}^2 \,d\theta,
\]
where we used \eqref{eq1703fri} and Parseval's identity in the last inequality.
This implies that the operator norm $\norm{\mathsf{C}} \le 1$, and thus proves the lemma.
\end{proof}

We note that \eqref{eq1815fri} remains valid for complex $\mu$ with $\abs{ \Re \mu}<1$.
Since $\mathsf{C}=\opM_{\mathsf s}\opH$ and  $\mathsf{s}^2=1$, it follows that
\begin{equation}		\label{eq1528sat}
\opH \left( \frac{1+\sin\theta}{1-\sin\theta}\right)^{\mu/2} = -\tan\left(\frac{\pi \mu}{2}\right)\,\mathsf{s}(\theta) \left( \frac{1+\sin\theta}{1-\sin\theta}\right)^{\mu/2},\qquad \abs{\Re \mu}<1.
\end{equation}

It is also possible to show that $\sigma(\mathsf{T})=2\overline{\mathbb{D}}$, but we do not need this fact here.
\subsection{Neumann series representation of the solution}
\label{sec4.5}

Lemma \ref{lem:C_spectrum} implies that if $\abs{k}<1$, then $\opI + k \mathsf{C}$ is invertible on $L^2_+$ by the Neumann series.
\[
(\opI + k \mathsf{C})^{-1}= \sum_{n=0}^\infty (-k \mathsf{C})^n=\sum_{n=0}^\infty (-k \mathsf{M_s} \opH)^n \quad \text{on }\;L^2_+.
\]
Therefore, there is a unique $\psi \in L^2$ satisfying \eqref{eq0858mon}, and it is given by
\begin{equation}		\label{eq1714mon}
\psi=y+ i  \opH y,\quad y=\frac{1}{2i} \sum_{n=0}^\infty (-k \mathsf{M_s} \opH)^n \mathsf{\Pi}_+ g,\qquad \abs{k}<1.
\end{equation}

On the other hand, when $\abs{k}>1$, we use the identity
\[
\mathsf{C} \mathsf{C}^*= \opI\quad \text{on }\;L^2_+,\qquad \mathsf{C}^*=-\opH \mathsf{M_s} \vert_{L^2_+}.
\]
Indeed, for $f \in L^2_+$, we have
\[
\mathsf{C} \mathsf{C}^* f= \mathsf{M_s} \opH (-\opH \mathsf{M_s} f)=-\mathsf{M_s} \opH^2 \mathsf{M_s} f=-\mathsf{s} \opH^2 (\mathsf{s}f).
\]
Since $f \in L^2_+$, the function $\mathsf{s}f$ has average zero, and hence $\opH^2 (\mathsf{s}f)= -\mathsf{s}f$.
Therefore,
\[
\mathsf{C} \mathsf{C}^* f = -\mathsf{s}(-\mathsf{s}f)=\mathsf{s}^2 f=f,\qquad f \in L^2_+.
\]
However, it should be noted that $\mathsf{C}$ is not injective since
\begin{equation}		\label{eq1735mon}
\ker \mathsf{C}= \ker \opH \cap L^2_+= \operatorname{span} \{1\}.
\end{equation}
Note that for $k \neq 0$, we have
\begin{equation}		\label{eq0613mon}
\opI + k \mathsf{C}= k \left(\mathsf{C}+\frac{1}{k} \opI\right),\qquad \left(\mathsf{C}+\frac{1}{k} \opI \right) \mathsf{C}^*= \opI+ \frac{1}{k} \mathsf{C}^*.
\end{equation}
Since $\norm{\mathsf{C}^*} = 1$, if $\abs{k}>1$, then $\opI + \frac{1}{k} \mathsf{C}^*$ is invertible on $L^2_+$ by the Neumann series.
Moreover,
\[
\left(\opI+ \frac{1}{k} \mathsf{C}^*\right)^{-1}= \sum_{n=0}^\infty \left(-\frac{1}{k}\mathsf{C}^*\right)^n=\sum_{n=0}^\infty \frac{1}{k^n}(\opH \mathsf{M_s})^n,\qquad \abs{k}>1.
\]
Hence, setting $y=\mathsf{C}^*  h$, where
\[
h := \frac{1}{2ki} \left(\opI+ \frac{1}{k} \mathsf{C}^*\right)^{-1}\mathsf{\Pi}_+g,
\]
and using \eqref{eq0613mon}, we find that $y$ satisfies \eqref{eq1601mon}.
Therefore, there exists a solution $\psi \in H^2_{\rm Hardy}$ satisfying \eqref{eq0858mon}; namely,
\begin{equation}		\label{eq1606mon}
\psi=y+ i  \opH y,\qquad y=-\frac{1}{2i} \sum_{n=0}^\infty \frac{1}{k^{n+1}}(\opH \mathsf{M_s})^{n+1} \mathsf{\Pi}_+ g,\qquad \abs{k}>1.
\end{equation}
We remark that $y \in \ran \mathsf{C}^*=(\ker \mathsf{C})^\perp$, and hence, by \eqref{eq1735mon},
\begin{equation}		\label{eq1543sun}
\ip{y,1}=0.
\end{equation}
This condition serves as a normalization that selects a solution of \eqref{eq0858mon} when $\abs{k}>1$, namely the solution represented by the Neumann series \eqref{eq1606mon}.
This normalization defines a bounded linear operator $\mathsf{S}(k)$ on $L^2$ by
\begin{equation}		\label{eq1715sat}
\mathsf{S}(k)g=\tilde g= -\sum_{n=0}^\infty \frac{1}{k^{n+1}}\mathsf{H}  (\opH \mathsf{M_s})^{n+1} \mathsf{\Pi}_+ g,
\end{equation}
where we used \eqref{eq1825fri} and \eqref{eq1606mon}.

\subsection{Failure of solvability at $\boldsymbol{\abs{k} = 1}$}
\label{sec4.6}
When $\abs{k}=1$, the map $\opI +k \mathsf{C}$ fails to be onto $L^2_+$.
To see this, let $u_0=e_0=1$, and define
\[
u_n = (\mathsf{C}^*)^n u_0,\quad n \ge 0.
\]
Since $\mathsf{C}\mathsf{C}^*=\opI$ and $\ker \mathsf{C} = \operatorname{span}\{ u_0 \}$, it follows that $\{u_n\}_{n=0}^\infty$ is an orthonormal sequence in $L^2_+$.
Note that
\[
\mathsf{C} u_0=0,\qquad \mathsf{C} u_n =u_{n-1}\quad (n\ge 1).
\]
Suppose that $h$ and $y$ are functions in $L^2_+$ satisfying
\begin{equation}			\label{eq1328fri}
(\opI + k \mathsf{C})y=h.
\end{equation}
Define the coefficients
\[
y_n=\ip{y, u_n},\qquad h_n=\ip{h, u_n}.
\]
Then
\[
y_n + k y_{n+1}= h_n \qquad (n \ge 0).
\]
Now consider the function $h \in L^2_+$ defined by
\[
h=\sum_{n=0}^\infty \frac{(- \bar k)^n}{n+1}u_n=\sum_{n=0}^\infty \frac{(- \bar k)^n}{n+1} (\mathsf{C}^*)^n u_0.
\]
If a solution $y$ of the equation \eqref{eq1328fri} existed, then we would have
\[
y_n+k y_{n+1}=\frac{(- \bar k)^n}{n+1}.
\]
Iterating this identity, for $N>n$, we obtain
\[
y_n - (-k)^{N-n} y_N= \sum_{m=1}^{N-n} \frac{(-\bar k)^n}{n+m}=(-\bar k)^n \sum_{m=n+1}^N \frac{1}{m}.
\]
Since $y \in L^2_+$, we have $y_N \to 0$ as $N \to \infty$.
This is impossible, however, because the right-hand side diverges as $N \to \infty$.
Thus, \eqref{eq1328fri} has no solution for this particular choice of $h$.
In other words, $\opI + k \mathsf{C}$ is not onto $L^2_+$ when $\abs{k}=1$.

\section{Analysis of the solution operator: $\abs{k}<1$} \label{sec:small_k}
Throughout this section, we assume that $\abs{k}<1$.
In Section~\ref{sec:solution}, we saw that, when $\abs{k}<1$, there is a unique solution $\psi$ of \eqref{eq0858mon}, given by the Neumann series expansion \eqref{eq1714mon}.
By \eqref{eq1825fri}, we then obtain the following expression for $\tilde{g}=(\opI - \opR)\psi$:
\begin{equation}		\label{eq0515tue}
\tilde{g}=\opS(k) g:= \sum_{n=0}^\infty (-k)^n  \opH (\mathsf{M_s} \opH)^n \mathsf{\Pi}_+ g,\qquad \abs{k}<1.
\end{equation}
Thus, when $\abs{k}<1$, the operator $\opS(k)$ defines the map $g \to \tilde{g}$ on $L^2$.
This operator is also bounded on $L^p$, $1<p<\infty$, under a further restriction on $k$.

\subsection{$L^p$ estimates for $\opS(k)$}	\label{sec5.1}
Since $\norm{\mathsf{s}}_{L^\infty}=1$, we have
\[
\norm{\mathsf{M_s} \opH}_{L^p\to L^p} \le \norm{\opH}_{L^p\to L^p}.
\]
Also, since $\opR$ is an isometry on $L^p$ and $\mathsf{\Pi}_+=\frac{1}{2}(\opI + \opR)$, we have
\[
\norm{\mathsf{\Pi}_+}_{L^p\to L^p} = 1.
\]
Therefore, it follows from \eqref{eq0515tue} that, for $\abs{k}<1$, the operator $\opS(k)$ is bounded on $L^p$, provided that
\[
k\norm{\opH}_{L^p\to L^p}<1.
\]

It is well known, following the sharp estimates of Pichorides \cite[Theorem~3.7]{Pichorides}, that the Hilbert transform on $L^p$ satisfies
\[
\norm{\opH}_{L^p\to L^p} = \begin{cases}
\tan \frac{\pi}{2p}, & 1<p \le 2,\\[2pt]
\cot \frac{\pi}{2p}, & 2\le p <\infty.
\end{cases}
\]
Therefore, the operator $\opS(k)$ is bounded on $L^p$ whenever
\begin{equation}			\label{eq1111mon}
\abs{k}<k_p := \begin{cases}
\cot \frac{\pi}{2p}, & 1<p \le 2,\\[2pt]
\tan \frac{\pi}{2p}, & 2\le p <\infty,
\end{cases}
\end{equation}
and, in this case,
\begin{equation}			\label{eq1343mon}
\norm{\opS(k)}_{L^p\to L^p} \le C(k, p)<\infty,\qquad 1<p<\infty.
\end{equation}

Recall the definition of the Poisson non-tangential maximal function from Section~\ref{sec:weak_sol}.
It is well known that, for $1<p<\infty$,
\begin{equation}		\label{eq1455mon}
\norm{\mathcal{P}_{\ast} f}_{L^p} \le C \norm{f}_{L^p},
\end{equation}
where $C=C(p,\beta)$.
By \eqref{eq0839wed}, \eqref{eq1343mon}, and \eqref{eq1455mon}, we therefore obtain
\[
\norm{\mathcal{N}_*u}_{L^p} \le C \norm{g}_{L^p},\qquad C=C(p,\beta, k)>0.
\]
Then, as in \eqref{eq0841wed}, we obtain the non-tangential convergence
\[
\lim_{\substack{re^{i\theta} \to e^{i\phi}\\ re^{i\theta} \in \mathcal{A}_\beta(\phi)}} u(r,\theta)  = g(\phi),\qquad \text{for a.e. }\, \phi \in \partial\mathbb{D}.
\]

We conclude this subsection by addressing the case $g\in L^p(\partial\mathbb D)$ with $1<p<2$.
In this range, it is not immediate from the definition \eqref{eq1121mon} that $u\in H^1_{\mathrm{loc}}(\mathbb D)$ or that $u$ is a weak solution of $Lu=0$.
Recall from \eqref{eq2119tue} that
\[
u=U-kV+k\mathsf{s}W.
\]
Since $U$, $V$, and $W$ are harmonic in $\mathbb D$, it remains only to verify that $\mathsf{s}W\in H^1_{\mathrm{loc}}(\mathbb D)$.
Because $\tilde g$ is $\opR$-antisymmetric, the argument in the proof of Lemma~\ref{lem2117mon} shows that $W=\mathcal P_r \ast \tilde g=0$ on $\Gamma$.
The estimate \eqref{eq2022mon} therefore remains valid, and hence $u\in H^1_{\mathrm{loc}}(\mathbb D)$.
To show that $u$ is a weak solution, it suffices, as in Section~\ref{sec3.2}, to verify the transmission condition \eqref{eq1539tue} across $\Gamma$.
Since $\mathsf S(k)$ is bounded on $L^p$, the identity \eqref{eq1531wed}, initially established for $L^2$ data, extends to $L^p$ by density.
Indeed, approximating $g$ in $L^p$ by functions in $L^2 \cap L^p$, the identity follows by the $L^p$ boundedness of $\mathsf H$, $\mathsf R$, and $\mathsf M_{\mathsf s}$.

\subsection{$H^1$-regularity and positivity}	\label{sec5.2}
We first show that $g\in H^1(\partial\mathbb{D})$ implies $u \in H^1(\mathbb{D})$.
We refer to \eqref{eq1732mon} for the definition of $H^1(\partial\mathbb{D})$.

\begin{lemma}		\label{lem:H1}
Assume that $\abs{k}<1$, and let $g\in H^1(\partial\mathbb{D})$.
Then $\tilde{g}\in H^1(\partial\mathbb{D})$, and $u \in H^1(\mathbb{D})$.
Moreover,
\[
\norm{\nabla u}_{L^2(\mathbb{D})}\le C \norm{g}_{H^1(\partial\mathbb{D})}.
\]
\end{lemma}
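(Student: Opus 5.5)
The plan is to run the Neumann series \eqref{eq0515tue} in $H^1(\partial\mathbb{D})$ instead of $L^2$. The point is that on the relevant subspaces each factor $\mathsf{M_s}\opH$ is a contraction in the $H^1$ norm. We use the norm $\norm{f}_{H^1}^2=\norm{f}_{L^2}^2+\norm{f'}_{L^2}^2=\sum_n(1+n^2)\abs{\hat f(n)}^2$, which is the one appearing in Lemma~\ref{lem1051tue} with $C_1=1$.

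\emph{Step 1: the building blocks.} First, $\opR$ is an isometry of $H^1(\partial\mathbb{D})$, since $\opR e_n=(-1)^ne_{-n}$. Hence $\mathsf{\Pi}_+$ has norm at most $1$ on $H^1$, and it maps $H^1$ into $H^1\cap L^2_+$. Second, $\opH$ is a Fourier multiplier with symbol of modulus at most $1$, so $\norm{\opH f}_{H^1}\le\norm{f}_{H^1}$. By \eqref{eq1609mon}, $\opH$ maps $H^1\cap L^2_+$ into $H^1\cap L^2_-$.

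\emph{Step 2: vanishing at $\pm\pi/2$.} Any $F\in H^1\cap L^2_-$ is continuous and satisfies $F(\pi-\theta)=-F(\theta)$. The reflection $\theta\mapsto\pi-\theta$ fixes $\pi/2$ and $-\pi/2\equiv 3\pi/2$, so $F(\pm\pi/2)=0$. Lemma~\ref{lem1051tue} with $\alpha=1$ then gives $\norm{\mathsf{M_s}F}_{H^1}=\norm{F}_{H^1}$, and $\mathsf{M_s}F\in L^2_+$.

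\emph{Step 3: convergence of the series.} Combining Steps 1 and 2, $\mathsf{M_s}\opH$ maps $H^1\cap L^2_+$ into itself with norm at most $1$. Therefore, for $\abs{k}<1$, the series $\sum_{n\ge0}(-k)^n\opH(\mathsf{M_s}\opH)^n\mathsf{\Pi}_+g$ converges absolutely in $H^1$. Its limit coincides with the $L^2$ limit $\tilde g=\opS(k)g$, and
\[
\norm{\tilde g}_{H^1}\le (1-\abs{k})^{-1}\norm{g}_{H^1}.
\]

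\emph{Step 4: $u\in H^1(\mathbb{D})$.} Since $H^1\subset H^{1/2+}$, we argue as in the $H^1$-regularity subsection preceding Theorem~\ref{thm1}, using the decomposition $u=U-kV+k\mathsf{s}W$ from \eqref{eq:uvw}. By \eqref{eq1025tue}, $\norm{\nabla U}_{L^2(\mathbb{D})}\lesssim\norm{g}_{H^{1/2}}$. The function $\tilde g$ is $\opR$-antisymmetric and lies in $H^1$, so it vanishes at $\pm\pi/2$. Lemma~\ref{lem1051tue} then gives $\mathsf{s}\tilde g\in H^1$ with the same norm, and hence $\norm{\nabla V}_{L^2(\mathbb{D})}\lesssim\norm{\tilde g}_{H^1}$. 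Lemma~\ref{lem2117mon} shows $W=0$ on $\Gamma$, so \eqref{eq2022mon} gives $\abs{\nabla(\mathsf{s}W)}=\abs{\nabla W}$ and $\norm{\nabla W}_{L^2(\mathbb{D})}\lesssim\norm{\tilde g}_{H^1}$. Summing these bounds and using Step 3 yields
\[
\norm{\nabla u}_{L^2(\mathbb{D})}\le C\norm{g}_{H^1(\partial\mathbb{D})}, \qquad C=C(k).
\]
Together with \eqref{eq1632tue}, this gives $u\in H^1(\mathbb{D})$.

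The main obstacle is Step 3: obtaining an $H^1$ operator norm of exactly $\le1$ for $\mathsf{M_s}\opH$, rather than a bound $C_1>1$, which would shrink the admissible range of $k$ below $\abs{k}<1$. This depends on two facts. First, the chosen $H^1$ norm is exactly the one for which $\opH$ is a contraction. Second, multiplication by $\mathsf{s}$ is an isometry on functions vanishing at $\pm\pi/2$, which is the equality case of Lemma~\ref{lem1051tue}. The vanishing at $\pm\pi/2$ is supplied automatically by the antisymmetry in Step 2.
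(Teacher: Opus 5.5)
Your proposal is correct and follows the paper's route. The paper proves by induction that $h_n=(\mathsf{M_s}\opH)^n\mathsf{\Pi}_+g$ satisfies $\norm{h_n}_{H^1}\le\norm{g}_{H^1}$, using the $H^1$-contractivity of $\opH$ and the isometric case $C_1=1$ of Lemma~\ref{lem1051tue}; it then sums the Neumann series in $H^1$ and bounds $U$, $V$, $\mathsf{s}W$ exactly as in your Step~4. Your explicit check that $H^1$ functions in $L^2_-$ vanish at $\pm\pi/2$, and your use of \eqref{eq1632tue} for the $L^2(\mathbb{D})$ part, fill in details the paper leaves implicit.
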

\begin{proof}
Recall the decomposition of $u$ in \eqref{eq2119tue}:
\[
u=U-kV+k\mathsf{s}W:=\mathcal{P}_r \ast g - k \mathcal{P}_r \ast (\mathsf{s}\tilde{g}) + k \mathsf{s} \mathcal{P}_r \ast \tilde{g}.
\]
Since $U=\mathcal{P}_r \ast g(\theta)$, \eqref{eq1025tue} gives
\begin{equation}		\label{eq1935fri}
\norm{\nabla U}_{L^2(\mathbb{D})}^2  \le C \norm{g}_{H^{1/2}(\partial\mathbb{D})}^2 \le C \norm{g}_{H^1(\partial\mathbb{D})}^2,
\end{equation}
where we used the embedding $H^1(\partial\mathbb{D}) \hookrightarrow H^{1/2}(\partial\mathbb{D})$.

Next, we claim that $\tilde{g} \in H^1(\partial\mathbb{D})$.
To see this, define
\[
h_n=(\mathsf{M_s} \opH)^n \mathsf{\Pi}_+ g,\qquad n=0,1,2,\ldots.
\]
We prove by induction on $n$ that
\begin{equation}			\label{eq2131tue}
h_n \in L^2_+,\qquad \norm{h_n}_{H^1(\partial\mathbb{D})} \le \norm{g}_{H^1(\partial\mathbb{D})},\qquad n\ge 0,
\end{equation}
where we use the norm
\[
\norm{h}_{H^1(\partial\mathbb{D})}^2=\norm{h}_{L^2(\partial\mathbb{D})}^2+\norm{h'}_{L^2(\partial\mathbb{D})}^2.
\]
Since $\opR$ is an isometry on $H^1(\partial\mathbb{D})$, we have
\[
\norm{\mathsf{\Pi}_+ g}_{H^1(\partial\mathbb{D})} \le \norm{g}_{H^1(\partial\mathbb{D})}.
\]
Hence, \eqref{eq2131tue} holds for $n=0$.

By property \eqref{eq1703fri} of the Hilbert transform and Parseval's identity, we have
\begin{equation}	\label{eq2151fri}
\norm{\opH f}_{H^1(\partial \mathbb{D})} \le \norm{f}_{H^1(\partial\mathbb{D})}.
\end{equation}
Assume that \eqref{eq2131tue} holds for some $n \ge 0$.
Then, by \eqref{eq2151fri},
\[
\norm{\opH h_n}_{H^1(\partial\mathbb{D})} \le \norm{h_n}_{H^1(\partial\mathbb{D})} \le \norm{g}_{H^1(\partial\mathbb{D})}.
\]
Moreover, by \eqref{eq1609mon}, we have $\opH h_n \in L^2_-=\ran \mathsf{\Pi}_-$.
Noting that $h_{n+1}=\mathsf{M_s} \opH h_n$, we apply Lemma \ref{lem1051tue} with $f=\opH h_n$ to obtain
\[
\norm{h_{n+1}}_{H^1(\partial\mathbb{D})} \le \norm{\opH h_n}_{H^1(\partial\mathbb{D})} \le \norm{g}_{H^1(\partial\mathbb{D})}.
 \]
Together with \eqref{eq1609mon}, this implies that \eqref{eq2131tue} holds with $n$ replaced by $n+1$, completing the induction.

Using \eqref{eq0515tue}, \eqref{eq2131tue}, \eqref{eq2151fri}, and summing over $n$, we obtain
\begin{equation}	\label{eq2210fri}
\norm{\tilde{g}}_{H^1(\partial\mathbb{D})} \le \sum_{n=0}^\infty \abs{k}^n \norm{h_n}_{H^1(\partial\mathbb{D})}  \le \left( \sum_{n=0}^\infty \abs{k}^n \right)\norm{g}_{H^1(\partial\mathbb{D})} \le C \norm{g}_{H^1(\partial\mathbb{D})},\qquad \abs{k}<1,
\end{equation}
where $C=C(k)<\infty$.
Thus, $\tilde{g} \in H^1(\partial\mathbb{D})$ whenever $\abs{k}<1$.

Since $\tilde{g}=(\opI-\opR)\psi$, it follows from Lemma~\ref{lem2117mon} that
\[
\mathcal{P}_r\ast \tilde{g}=0\quad\text{on }\,\Gamma.
\]
Hence, by \eqref{eq2022mon}, \eqref{eq1025tue}, and \eqref{eq2210fri}, $W=\mathcal{P}_r \ast \tilde{g}$ satisfies
\begin{equation}		\label{eq1936fri}
\norm{\nabla(\mathsf{s}W)}_{L^2(\mathbb{D})}^2 =\norm{\nabla W}_{L^2(\mathbb{D})}^2  \le C \norm{\tilde{g}}_{H^{1/2}(\partial\mathbb{D})}^2 \le C \norm{g}_{H^1(\partial\mathbb{D})}^2.
\end{equation}
Also, by \eqref{eq1025tue} and \eqref{eq2210fri}, $V=\mathcal{P}_r \ast (\mathsf{s} \tilde g)$ satisfies
\begin{equation}		\label{eq1939fri}
\norm{\nabla V}_{L^2(\mathbb{D})}^2  \le C \norm{\mathsf{s}\tilde{g}}_{H^{1/2}(\partial\mathbb{D})}^2 \le C \norm{g}_{H^1(\partial\mathbb{D})}^2.
\end{equation}
Combining \eqref{eq1935fri}, \eqref{eq1936fri}, and \eqref{eq1939fri}, we complete the proof.
\end{proof}

We next prove positivity.
For sufficiently regular nonnegative boundary data, the corresponding solution belongs to $H^1(\mathbb D)$, and hence the weak maximum principle gives $u \ge 0$ in $\mathbb D$.
By the standard approximation procedure for the continuous Dirichlet problem, this yields, for every $g\in C(\partial\mathbb D)$, a unique solution $u\in C(\overline{\mathbb D})$, and the resulting solution operator is positivity preserving; see \cite[Chapter~8]{GT}.
Consequently, for each $re^{i\theta} \in\mathbb D$, the map
\[
g\longmapsto u(r,\theta)
\]
is represented by a positive $L$-harmonic measure on $\partial\mathbb D$.

Finally, under the assumptions of Section~\ref{sec5.1}, the $L^p$ solution agrees with this representation.
Therefore, if $g\in L^p(\partial\mathbb D)$ and $g \ge 0$ a.e. on $\partial\mathbb D$, then $u \ge 0$ in $\mathbb D$.

\subsection{Summary}
We summarize the results obtained in the regime $\abs{k}<1$ in the following theorem.
We note that the range $\abs{k}<k_p$ in part~(a) is not optimal when $p>2$; this issue will be revisited in Section~\ref{sec:RH}.

\begin{theorem}	\label{thm:small_k}
Let $\abs{k}<1$, and define
\[
u(r,\,\cdot\,)=\mathsf{P_r} g-k[\mathsf{P_r}, \mathsf{M_s}]\opS(k)g,
\]
where $\opS(k)$ is given by \eqref{eq0515tue}.
Then the following properties hold:
\begin{enumerate}
\item \emph{($L^p$ boundedness and non-tangential convergence).}
Let $1<p<\infty$, and suppose that $\abs{k}<k_p$, where $k_p$ is defined in \eqref{eq1111mon}.
Then $\opS(k)$ is bounded on $L^p(\partial\mathbb D)$.
For every $g\in L^p(\partial\mathbb D)$, the corresponding function $u$ belongs to $H^1_{\mathrm{loc}}(\mathbb D)$, is a weak solution of \eqref{eq:u}, and converges non-tangentially to $g$ almost everywhere on $\partial\mathbb{D}$.
Moreover,
\[
\norm{\mathcal{N}_*u}_{L^p(\partial\mathbb{D})} \le C \norm{g}_{L^p(\partial\mathbb{D})},
\]
where $C=C(p,k,\beta)>0$ and $\beta$ is the aperture constant in \eqref{eq0856wed}.

\item \emph{($H^1$-regularity and positivity).}
If $g\in H^1(\partial\mathbb D)$, then $u\in H^1(\mathbb D)$ and is the unique weak solution in $H^1(\mathbb D)$ of the Dirichlet problem \eqref{eq:dp_in_disk}.
Moreover, under the assumptions of part~{\rm (a)}, the solution operator is positivity preserving: if $g \in L^p(\partial\mathbb D)$ and $g\ge 0$ almost everywhere on $\partial\mathbb D$, then $u\ge 0$ in $\mathbb D$.
\end{enumerate}
\end{theorem}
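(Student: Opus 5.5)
The theorem is a summary of Sections~\ref{sec5.1}--\ref{sec5.2}, so the plan is to assemble those results in order. The proof rests on the fact that $\opS(k)$ is exactly the map $g\mapsto\tilde g$ defined through the unique $L^2$ solution $\psi$ of \eqref{eq0858mon}.

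\textbf{Step 1: identify $\opS(k)$.} For $\abs{k}<1$, Lemma~\ref{lem:C_spectrum} gives $\norm{\mathsf C}\le1$, so the Neumann series \eqref{eq1714mon} yields the unique $\psi$. By \eqref{eq1825fri}, $\tilde g=(\opI-\opR)\psi=2i\opH y$, and this is precisely \eqref{eq0515tue}. Hence $u$ is the function \eqref{eq1121mon}, and Theorem~\ref{thm1} applies for $L^2$ data.

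\textbf{Step 2: part (a).} Let $\abs{k}<k_p$. Pichorides' bound gives $\abs{k}\,\norm{\opH}_{L^p\to L^p}<1$. Together with $\norm{\mathsf{M_s}}\le1$ and $\norm{\mathsf\Pi_+}\le1$, the series \eqref{eq0515tue} therefore converges in operator norm on $L^p$, which gives \eqref{eq1343mon}.

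Next, the pointwise bound \eqref{eq0839wed} combined with \eqref{eq1455mon} yields the $L^p$ estimate for $\mathcal N_*u$. Non-tangential convergence follows exactly as in \eqref{eq0841wed}.

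For $H^1_{\rm loc}$ and the weak formulation:
\begin{itemize}
\item $W$ vanishes on $\Gamma$ by the antisymmetry $\opR\tilde g=-\tilde g$. This antisymmetry persists for $L^p$ data, since it holds for each term of the series.
\item Consequently \eqref{eq2022mon} holds, and $u\in H^1_{\rm loc}$.
\item The transmission identity \eqref{eq1531wed} extends from $L^2\cap L^p$ to $L^p$ by density, using the boundedness of $\opS(k)$, $\opH$, $\opR$, and $\mathsf{M_s}$ on $L^p$.
\item Lemma~\ref{lem1431tue} then gives the flux condition, as in Section~\ref{sec3.2}.
\end{itemize}

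\textbf{Step 3: $H^1$ regularity in part (b).} Invoke Lemma~\ref{lem:H1}: we get $\tilde g\in H^1(\partial\mathbb D)$ and $u\in H^1(\mathbb D)$. Since $u\to g$ in trace sense, uniqueness in $H^1$ is the standard Lax--Milgram / weak maximum principle statement for real uniformly elliptic coefficients. Complex data are handled by splitting into real and imaginary parts; this is legitimate because $\opS(k)$ commutes with complex conjugation, as $\opH$ maps real functions to real functions.

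\textbf{Step 4: positivity in part (b).} This is the step I expect to be the main obstacle, because the $L^p$ branch must be identified with the $L$-harmonic measure branch.
\begin{itemize}
\item For $g\in C^\infty(\partial\mathbb D)$ with $g\ge0$, Step 3 puts $u\in H^1$, and the weak maximum principle gives $u\ge0$.
\item For $g\in C(\partial\mathbb D)$, approximate uniformly by smooth $g_j$. The maximum principle makes $u_j$ Cauchy in $C(\overline{\mathbb D})$, and continuity of boundary values follows from De Giorgi--Nash and the Gr\"uter--Widman / Littman--Stampacchia--Weinberger regularity of boundary points. This produces a positive representing measure $\omega^{re^{i\theta}}$.
\item For $g\in L^p$ with $g\ge0$, take smooth $g_j\ge0$ with $g_j\to g$ in $L^p$. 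Boundedness of $\opS(k)$ on $L^p$ and the explicit formula show that $u_j(r,\theta)\to u(r,\theta)$ pointwise for fixed $re^{i\theta}$. Indeed, $\mathsf{P_r}$ and $[\mathsf{P_r},\mathsf{M_s}]$ at fixed $r<1$ are given by bounded kernels, so they are continuous from $L^p$ to pointwise values.
\item Since each $u_j\ge0$, the limit satisfies $u\ge0$.
\end{itemize}
The care required is to check that the smooth-data solutions given by the formula coincide with the maximum-principle solutions. This holds by the uniqueness in Step 3.
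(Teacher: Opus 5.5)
Your proposal is correct and follows essentially the same route as the paper. It uses Neumann-series boundedness of $\opS(k)$ on $L^p$ via Pichorides' constant, the pointwise bound for $\mathcal N_*u$, the density extension of the transmission identity, Lemma~\ref{lem:H1} for $H^1$ data, and the weak maximum principle for positivity. Your last step is slightly more explicit than the paper's: you pass from smooth nonnegative data to $g\in L^p$ directly, using $L^p$-continuity of $\opS(k)$ and the boundedness of the Poisson kernel at a fixed interior point. This spells out the paper's brief claim that the $L^p$ solution agrees with the $L$-harmonic measure representation, and it makes your continuous-data bullet unnecessary for the statement as posed.
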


\section{Riemann--Hilbert problem}			\label{sec:RH}
We saw that finding $\psi \in H^2_{\rm Hardy}$ satisfying the equation \eqref{eq1555mon} is equivalent to finding $y \in L^2_+$ satisfying the equation \eqref{eq1601mon}.

\subsection{Explicit solution formula}
We are interested in the case when $g$ is real-valued.
Multiplying \eqref{eq1601mon} by $i$, we obtain
\begin{equation}	\label{eq1835mon}
q+k\mathsf{s} \opH q= \tfrac{1}{2} \mathsf{\Pi}_+g,\qquad q=iy,
\end{equation}
where $q$ and $g$ are real-valued functions on the unit circle $\partial \mathbb{D}$.

We next recast the boundary equation as a Riemann--Hilbert problem, following a classical reduction in the theory of singular integral equations; see, for example, \cite{Gakhov,Muskhelishvili}.
By Lemma \ref{lem:B_isometry}, we may associate to $q$ a holomorphic function $Y: \mathbb{D} \to \mathbb{C}$ whose boundary trace satisfies
\[
Y_+= q+i \opH q \in H^2_{\rm Hardy}.
\]
Equation \eqref{eq1835mon} then takes the form
\begin{equation}	\label{eq1853mon}
\Re [(1-ik\mathsf{s})Y_+]=\tfrac{1}{2} \mathsf{\Pi}_+g.
\end{equation}
Since $Y$ is holomorphic in $\mathbb D$ with boundary trace $q+i\opH q$, it has the expansion
\[
Y(z)=  \hat q(0)+2\sum_{n = 1}^\infty \hat q(n) z^n.
\]
In particular, $Y$ must satisfy the normalization condition
\begin{equation}	\label{eq1539wed}
Y(0)=\hat q(0)=\ip{q,1} \in \mathbb{R}.
\end{equation}

We factor $Y$ as $Y=XZ$, where
\[
X(z):=\left( \frac{1-iz}{1+iz} \right)^\mu,\qquad X(0)=1,\qquad \mu \in \mathbb{R}.
\]
Here, the power is defined using the holomorphic branch of the logarithm on the right half-plane. 
Indeed, the M\"obius transformation
\[
z\mapsto \frac{1-iz}{1+iz}
\]
maps $\mathbb{D}$ conformally onto $\{z \in \mathbb{C}: \Re z>0\}$.
Thus $X$ is holomorphic and nonvanishing in $\mathbb{D}$.
Moreover, for $z=e^{i\theta}$,
\[
\frac{1-ie^{i\theta}}{1+ie^{i\theta}}= -i \,\frac{\cos\theta}{1-\sin\theta},
\]
and hence its boundary trace satisfies
\[
\arg X_+(\theta) =-\frac{\mu\pi}{2}\mathsf{s}(\theta),\qquad \theta \neq  \pm \frac{\pi}{2}.
\]

Since
\[
\arg(1-ik\mathsf{s})=-\mathsf{s} \alpha,\qquad \alpha:=\arctan k \in (-\pi/2, \pi/2),
\]
we have
\[
(1-ik\mathsf{s})X_+= \sqrt{1+k^2}\, \abs{X_+}\, e^{-i\mathsf{s}(\alpha+\mu\pi/2)}.
\]
We choose
\begin{equation}		\label{eq1854mon}
\mu=-\frac{2\alpha}{\pi} + N,\qquad N \in \mathbb{Z},
\end{equation}
so that \eqref{eq1853mon} becomes
\begin{equation}	\label{eq1855mon}
\Re [e^{-\mathsf{s} N \pi i /2} Z_+]=f:=\frac{\mathsf{\Pi}_+g}{2\sqrt{1+k^2}\, \abs{X_+}},
\end{equation}
where $Z$ is holomorphic in $\mathbb{D}$ and $Z_+$ denotes its boundary trace on $\partial \mathbb{D}$.
Note that $e^{-\mathsf{s} N \pi i /2} = \pm 1$ or $e^{-\mathsf{s} N \pi i /2} = \pm i$, depending on whether $N$ is even or odd.

To find a function $Z$ satisfying \eqref{eq1855mon}, introduce the Schwarz integral operator
\begin{equation}	\label{eq1859mon}
\mathcal{S} f(z)= \frac{1}{2\pi} \int_{-\pi}^\pi f(e^{i\theta})\, \frac{e^{i\theta}+z}{e^{i\theta}-z} \, d\theta,\qquad z \in \mathbb{D}.
\end{equation}
We require $f \in L^{1+}(\partial \mathbb{D})$, that is, $f \in L^{1+\epsilon}(\partial \mathbb{D})$ for some $\epsilon>0$, so that
\begin{equation}	\label{eq1041wed}
(\mathcal{S}f)_+=f+ i\opH f.
\end{equation}
Since
\[
\abs{X_+(e^{i\theta})}= \Abs{\frac{\cos \theta}{1-\sin \theta}}^\mu=
\left(\frac{1+\sin \theta}{1-\sin\theta} \right)^{\mu/2}
\]
and $g \in L^2(\partial \mathbb{D})$, we require $\mu \in (-\frac12, \frac12)$ in order to apply H\"older's inequality and obtain
\[
1/\abs{X_+} \in L^{2+}(\partial \mathbb{D}).
\]
We will also need the following explicit formula for $X_+$:
\begin{equation}	\label{eq1328wed}
X_+(e^{i\theta})= \abs{X_+(e^{i\theta})}\, e^{-i\pi \mu \mathsf{s}(\theta)/2}= \left(\frac{1+\sin \theta}{1-\sin\theta} \right)^{\mu/2} e^{-i\pi \mu \mathsf{s}(\theta)/2}.
\end{equation}

We consider separately the cases $\abs{k}<1$ and $\abs{k}>1$.

\subsubsection*{Case 1: $\abs{k}<1$}
We take $N=0$, so that $\mu=-2\alpha/\pi \in (-\frac12, \frac12)$. 
Then \eqref{eq1855mon} becomes
\[
\Re Z_+=f,\qquad f=\frac{\mathsf{\Pi}_+ g}{2\sqrt{1+k^2}\, \abs{X_+}}.
\]
Hence $Z-\mathcal{S}f$ is holomorphic in $\mathbb{D}$ and satisfies
\[
\Re (Z-\mathcal{S}f)_+ =0\quad \text{a.e.  on }\;\partial\mathbb{D}.
\]
It follows that
\begin{equation}		\label{eq0807thu}
Z-\mathcal{S}f= i C,\quad  C \in \mathbb{R}.
\end{equation}
Therefore,
\[
Y=XZ=X \mathcal{S}f + i C X.
\]
Since $X(0)=1$, we have
\[
Y(0)= \mathcal{S}f(0) + iC.
\]
It follows from \eqref{eq1859mon} that $\mathcal{S}f(0)=\ip{f,1} \in \mathbb{R}$.
Thus, by the condition  \eqref{eq1539wed}, we obtain $C=0$.
Therefore, $Z=\mathcal{S}f$, and hence, by \eqref{eq1041wed},
\[
Y_+=X_+ f + i X_+  \opH f,\qquad f=\frac{\mathsf{\Pi}_+ g}{2\sqrt{1+k^2}\, \abs{X_+}}.
\]
Since $y=-i \Re Y_+$, using \eqref{eq1328wed} with $\mu=-2\alpha/\pi$, we obtain
\begin{multline}	\label{eq1910mon}
y=-\frac{i}{2(1+k^2)} \left\{ \mathsf{\Pi}_+ g -k \mathsf{s} w\, \opH \left(\frac{\mathsf{\Pi}_+ g}{w}\right) \right\},\\
w(\theta)= \left(\frac{1+\sin \theta}{1-\sin\theta} \right)^{\mu/2},\qquad \mu=-\frac{2}{\pi}\arctan k.
\end{multline}
Since $w^2$ is an $A_2$ weight for $\abs{\mu}<1/2$, it follows from \eqref{eq1910mon} that $y \in L^2_+$ whenever $g \in L^2$; see \cite{HMW1973}.
Therefore, $\psi=y+i \opH y \in H^2_{\rm Hardy}$ whenever $g \in L^2$.
By uniqueness, this solution must agree with the Neumann series solution \eqref{eq1714mon}. 

\subsubsection*{Case 2: $\abs{k}>1$}
We take $N=\sgn k \in \{1, -1\}$, so that
\[
\mu=-\tfrac{2}{\pi} \arctan k + \sgn k \in (-\tfrac12,  \tfrac12).
\]
Since $\mathsf{s}$ is real-valued and $\mathsf{s}^2=1$, equation \eqref{eq1855mon} becomes
\[
\Im Z_+=-\Re [i  Z_+]= (\sgn k)\, \mathsf{s}f,\qquad f =\frac{\mathsf{\Pi}_+ g}{2\sqrt{1+k^2}\, \abs{X_+}}.
\]
Using the identity $\Im[i \mathcal{S}(\mathsf{s}f)]_+=\mathsf{s}f$, we obtain, as in \eqref{eq0807thu},
\[
Z-i (\sgn k) \, \mathcal{S}[\mathsf{s}f]=C,\qquad C \in \mathbb{R}.
\]
Therefore,
\[
Y=XZ=CX +i(\sgn k)\, X \mathcal{S}[\mathsf{s}f], \qquad C \in \mathbb{R}.
\]
Using $X(0)=1$ and $\mathcal{S}[\mathsf{s}f](0)= \ip{\mathsf{s}f, 1}=0$, we obtain $Y(0)= C \in \mathbb{R}$.
Hence the condition \eqref{eq1539wed} is automatically satisfied, and
\begin{equation}	\label{eq0942thu}
C=\ip{q,1}=i\ip{y,1}.
\end{equation}
Therefore, using \eqref{eq1328wed} with $\mu=-2\alpha/\pi+ \sgn k$, as in \eqref{eq1910mon}, we obtain
\begin{equation}			\label{eq1437mon}
y= -i \left\{C \frac{\abs{k} w}{\sqrt{k^2+1}}  + \frac{\mathsf{\Pi}_+ g}{2(k^2+1)} - \frac{kw }{2(k^2+1)} \opH \left(\frac{\mathsf{s}\mathsf{\Pi}_+ g}{w}\right) \right\},
\end{equation}
where
\[
w(\theta)= \left(\frac{1+\sin \theta}{1-\sin\theta} \right)^{\mu/2},\qquad \mu=\sgn k -\frac{2}{\pi}\arctan k.
\]

Since $w \in L^2_+$ and $w^2$ is an $A_2$ weight, the same argument used in the case $\abs{k}<1$ shows from \eqref{eq1437mon} that $y \in L^2_+$ whenever $g \in L^2$.
Therefore, $\psi=y+i \opH y$ is a solution of \eqref{eq0858mon}.
In contrast to the case when $\abs{k}<1$, the constant $C$ remains a free real parameter.
This is consistent with the failure of injectivity of the operator $\opI + k \mathsf{C}$ when $\abs{k}>1$.
Note that $w$ is precisely the eigenfunction appearing in the proof of Lemma~\ref{lem:C_spectrum}.
In particular, if $y$ is the solution given by \eqref{eq1606mon}, then \eqref{eq0942thu} and \eqref{eq1543sun} imply that $C=0$.
Hence
\begin{multline}			\label{eq1344fri}
y= -\frac{i}{2(k^2+1)} \left\{ \mathsf{\Pi}_+ g - k w\, \opH \left(\frac{\mathsf{s}\mathsf{\Pi}_+ g}{w}\right) \right\},\\
w(\theta)= \left(\frac{1+\sin \theta}{1-\sin\theta} \right)^{\mu/2},\qquad \mu=\sgn k -\frac{2}{\pi}\arctan k.
\end{multline}
Therefore, for this canonical choice of $y$, the function $\psi= y + i\opH y$ coincides with the Neumann series solution \eqref{eq1606mon}.

\subsection{Poisson kernel for the Dirichlet problem}
\label{sec6.2}
We now analyze the results obtained in the previous subsection. We write
\[
L^2(\partial\mathbb D;\mathbb R)
:=\left\{g\in L^2(\partial\mathbb D): g \text{ is real-valued a.e.}\right\}.
\]
Let $g\in L^2(\partial\mathbb D;\mathbb R)$, and define $y$ by \eqref{eq1910mon} when $\abs{k}<1$ and by \eqref{eq1344fri} when $\abs{k}>1$.
Then $\psi=y+i\opH y$ satisfies \eqref{eq0858mon} and admits the series representations \eqref{eq1714mon} and \eqref{eq1606mon} in the regimes $\abs{k}<1$ and $\abs{k}>1$, respectively.
Together with \eqref{eq1825fri}, this shows that $\tilde{g}=(\opI-\opR)\psi$ is real-valued and satisfies
\begin{equation}		\label{eq0228wed}
\norm{\tilde{g}}_{L^2(\partial\mathbb{D})} \le C \norm{g}_{L^2(\partial\mathbb{D})},
\end{equation}
where $C$ depends only on $k$.
Indeed,
\[
\tilde{g}=\mathsf{S}(k)g,
\]
where $\mathsf{S}(k)$ is defined by \eqref{eq0515tue} when $\abs{k}<1$ and by \eqref{eq1715sat} when $\abs{k}>1$.
Hence, the function $u$ defined by \eqref{eq1121mon} is real-valued and satisfies conclusions (a)--(d) of Theorem~\ref{thm1}, together with the estimate \eqref{eq0228wed}.

For each fixed $re^{i\theta}\in\mathbb D$, the map $g \mapsto u(r,\theta)$ defines a bounded linear functional on the real Hilbert space $L^2(\partial\mathbb D; \mathbb R)$.
Therefore, by the Riesz representation theorem, there exists a unique function
\[
\mathcal K_k(r, \theta,\,\cdot\,)\in L^2(\partial\mathbb D; \mathbb R),
\]
which we call the Poisson kernel, such that
\begin{equation}\label{poisson_kernel}
u(r,\theta) = \frac{1}{2\pi}\int_{-\pi}^{\pi} \mathcal K_k(r, \theta,\phi)g(\phi)\,d\phi.
\end{equation}
When the dependence on $r$, $\theta$, and $\phi$ is not important, we shall write simply $\mathcal K_k$.

For $\phi\neq\pm\pi/2$, an explicit formula for $\mathcal K_k(r, \theta,\phi)$ may be derived formally by taking $g=\delta(\,\cdot-\phi)$.
Since the formula will not be used below, we omit it.

We note that $\mathcal{K}_k$ depends on the index $N$ appearing in \eqref{eq1854mon}.
To indicate the chosen branch, we write
\[
\mathcal{K}_k(r, \theta,\,\cdot\,)= \mathcal{K}_k^{(N)}(r, \theta,\,\cdot\,).
\]
For $L^2$ boundary data, the natural choice is
\[
N=0\;\text{ when }\;\abs{k}<1, \qquad N=1\;\text{ when }\; k>1, \qquad N=-1\; \text{ when }\; k<-1.
\]
The admissible choice of $N$, however, is not confined to these regimes.
Other branches may also be used under suitable $L^p$-integrability assumptions on the boundary data.

We first consider the branch $\mathcal{K}_k^{(0)}$, without imposing the restriction $\abs{k}<1$.
The following lemma, together with \eqref{eq1825fri}, shows that if $y$ is defined by \eqref{eq1910mon}, then
\[
\tilde{g} = 2i \opH y \in H^{1/2+}(\partial \mathbb{D})
\]
whenever $g \in H^{1/2+}(\partial \mathbb{D})$, for every $k\in\mathbb R$.

\begin{lemma}	\label{lem_sobolev}
Let $-1<\mu<1$, and define
\begin{equation}		\label{eq1752sat}
\mathcal{T}_\mu g =\mathsf{s} w \,\opH\left(\frac{\mathsf{\Pi}_+g}{w}\right),
\qquad
w(\theta)=\left(\frac{1+\sin\theta}{1-\sin\theta}\right)^{\mu/2}.
\end{equation}
If $g \in H^{1/2+\epsilon}(\partial \mathbb{D})$ for some $\epsilon>0$, then
\[
\mathcal{T}_\mu g  \in H^{1/2+\delta}(\partial \mathbb{D})
\]
for every $\delta$ satisfying
\[
0<\delta\le\epsilon \qquad\text{and}\qquad \delta<1-\abs{\mu}.
\]
\end{lemma}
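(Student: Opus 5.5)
The plan is to reduce the statement to a local analysis near the two points $\theta=\pm\frac{\pi}{2}$, where $w$ has the power-type singularities $w\sim c\,\abs{\theta\mp\frac{\pi}{2}}^{\mp\mu}$. Away from these points $w$ and $1/w$ are smooth, and $\mathsf{s}$ is locally constant. So after a smooth partition of unity $1=\chi_0+\chi_++\chi_-$, with $\chi_\pm$ supported near $\pm\frac{\pi}{2}$, the pieces $\chi_0\mathcal{T}_\mu g$ and the off-diagonal contributions are handled by standard facts. First, $\opH$ is bounded on every $H^\alpha(\partial\mathbb D)$. Second, multiplication by smooth functions is bounded on $H^\alpha$. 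Third, the commutator $[\opH,\chi]$ with a smooth cutoff $\chi$ is smoothing. Applied to $\chi_0$, these give $\chi_0\mathcal{T}_\mu g\in H^{1/2+\epsilon}$. The real work is at the two singular points, and by the symmetry $\theta\mapsto-\theta$ (which replaces $\mu$ by $-\mu$) it suffices to treat $\theta=\frac{\pi}{2}$.

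Next I would remove the boundary values. Set $h=\mathsf{\Pi}_+g\in H^{1/2+\epsilon}$, which is Hölder continuous of order $\epsilon$ (for $\epsilon<\frac12$) and $\opR$-symmetric. I would write $h=a+b\sin\theta+r$, choosing the constants $a,b$ so that $r(\pm\frac{\pi}{2})=0$; both $1$ and $\sin\theta$ lie in $L^2_+$. For the constant, \eqref{eq1528sat} with exponent $-\mu$ gives $\opH(1/w)=\tan(\pi\mu/2)\,\mathsf{s}/w$, hence $\mathcal{T}_\mu 1=\tan(\pi\mu/2)$, which is smooth. For $\sin\theta$ I would use the variables $x=\tan u$ of Lemma~\ref{lem:C_spectrum}, in which $\sin\theta=(x^2-1)/(x^2+1)$ and $w=x^{\mu}$. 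Decomposing $(x^2-1)/(x^2+1)$ into terms of the form $x^{\nu}$ (via a Mellin/partial-fraction representation), I expect $\mathcal{T}_\mu(\sin\theta)$ to be an explicit smooth function plus $w$-multiples of functions that are smooth across $\pm\frac{\pi}{2}$. If this computation becomes awkward, an alternative is to replace $\sin\theta$ by any fixed smooth $\opR$-symmetric function equal to $1$ near $\frac{\pi}{2}$ and $-1$ near $-\frac{\pi}{2}$, and to treat its contribution by the same local analysis as $r$.

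The main obstacle is the remainder: showing that $\chi_+\,\mathsf{s} w\,\opH(r/w)\in H^{1/2+\delta}$ when $r\in H^{1/2+\epsilon}$ vanishes at $\frac{\pi}{2}$. Near that point I would flatten to the model half-line problem, exactly as in the proof of Lemma~\ref{lem:C_spectrum}. In the variable $x\in(0,\infty)$ the operator $\mathsf{C}$ becomes a Mellin convolution with kernel $\frac{2}{\pi}\frac{1}{1-t^2}$, whose Mellin symbol is $-\tan(\pi z/2)$ on the strip $\abs{\Re z}<1$. Conjugating by $x^{\mu}$ shifts this symbol to $-\tan(\pi(z+\mu)/2)$ or $-\tan(\pi(z-\mu)/2)$, depending on the side of the conjugation. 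The vanishing of $r$ at the endpoint, together with Hardy-type inequalities, lets me characterize $H^{1/2+\delta}$ functions vanishing at $0$ by weighted $L^2$ Mellin conditions on a line. The condition $\delta\le\epsilon$ ensures that $r$ lies in the required weighted space. The condition $\delta<1-\abs{\mu}$ is exactly what keeps the shifted Mellin line strictly inside the strip where $\tan$ has no poles, so the symbol is bounded there and Plancherel for the Mellin transform gives the estimate. I would first verify this Mellin bookkeeping carefully on the pure-power model $r=x^{\gamma}\chi(x)$. I would then extend it to general $r$ by the Hardy-inequality characterization. The conclusion is to cut back with $\chi_+$ and to glue across the sign change of $\mathsf{s}$ using Lemma~\ref{lem1051tue}, noting that the output vanishes at the interface in the range $\frac12<\frac12+\delta<\frac32$.
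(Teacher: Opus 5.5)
Your proposal is correct and follows the same route as the paper's sketch. You localize at $\theta=\pm\frac{\pi}{2}$, where the operator is the power-weighted Hilbert transform on even functions; in the variable $x=\tan u$ of Lemma~\ref{lem:C_spectrum} this is exactly the Mellin multiplier $-\tan(\pi(z-\mu)/2)$. The condition $\delta<1-\abs{\mu}$ then comes from keeping the lines $\Re z=\pm\delta$ away from its poles $z=\mu\pm1$, and your subtraction of $a+b\sin\theta$ and the weighted Mellin bookkeeping supply the details the paper omits.

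One correction to your expectation for the $\sin\theta$ term. A direct computation gives $\mathcal{T}_\mu(\sin\theta)=\tan(\pi\mu/2)\sin\theta-w\abs{\cos\theta}/\cos(\pi\mu/2)$. So the non-smooth part is $w$ times a function that vanishes at $\pm\frac{\pi}{2}$ and is not smooth there, behaving like $\abs{\theta\mp\frac{\pi}{2}}^{1\mp\mu}$. That term already lies in $H^{1/2+\delta}$ only when $\delta<1-\abs{\mu}$, which shows the range in the lemma is sharp.
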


The proof proceeds by localizing near the singular points $\theta=\pm\pi/2$.
Away from these points, the claim follows from the standard Sobolev boundedness of $\opH$ and multiplication by smooth functions.
Near each singularity, the operator reduces to a localized power-weighted Hilbert transform of the form
\[
f\mapsto \sgn (t)\, \abs{t}^{-\nu}\, \mathcal{H} (\abs{\,\cdot\,}^\nu f)(t),\qquad \mathcal{H}f(t)=\frac{1}{\pi}\, \mathrm{p.v.}\! \int_{-\infty}^\infty \frac{f(s)}{t-s}\,ds,
\]
on even functions, with $\nu=\mu$ near $\theta=\pi/2$ and $\nu=-\mu$ near $\theta=-\pi/2$.
A localization argument and standard estimates for power-weighted Hilbert transforms yield the stated range of $\delta$.
The corresponding model also appears in the proof of Lemma~\ref{lem:C_spectrum}, where the same principal value computation gives
\[
\mathcal T_\mu 1=\tan(\pi\mu/2).
\]
We omit the technical details.
In the special case $\abs{\mu}<\frac12$ and $\epsilon=\delta=\frac12$, an elementary proof is given in Lemma~\ref{lem:H1}.

\smallskip
Theorem~\ref{thm1}(e) therefore shows that, for every $k\in\mathbb R$, if the boundary datum $g$ is real-valued and belongs to $H^{1/2+}(\partial\mathbb D)$, then the solution
\[
u=\mathsf{P_r} g-k[\mathsf{P_r},\mathsf{M_s}] \tilde{g}, \qquad \tilde{g}=2i\opH y,
\]
with $y$ given by \eqref{eq1910mon}, satisfies the weak maximum principle.
Equivalently, the branch $\mathcal{K}_k^{(0)}$ defines a nonnegative Poisson kernel even outside the regime in which it is the natural $L^2$ branch.

\subsection{$L^p$ Dirichlet problem}
As noted in the preceding subsection, the branch $\mathcal{K}_k^{(0)}$ is not intrinsically confined to the regime $\abs{k}<1$.
That restriction was imposed in the $L^2$ construction to ensure that $y$ defined by \eqref{eq1910mon} belongs to $L^2(\partial\mathbb D)$ and that
\[
\tilde g =2i \opH y \in L^2(\partial\mathbb D)
\]
whenever $g\in L^2(\partial\mathbb D)$.
As explained in Section~\ref{sec5.1}, however, one may instead take $g \in L^p(\partial\mathbb D)$ for $1<p<\infty$, provided that $k$ lies in the corresponding admissible range.
The key point is the $L^p$-boundedness of the operator $\opS(k)$, which maps $g$ to $\tilde g$.
Since $\tilde g=2i\opH y$ and $\opH$ is bounded on $L^p$ for every $1<p<\infty$, it suffices to establish the $L^p$-boundedness of the map $g \mapsto y$.
For the branch $N=0$, this follows from the mapping property of $\mathcal T_\mu$ defined in \eqref{eq1752sat} with $\mu=-\frac{2}{\pi} \arctan k$:
\begin{equation}	\label{eq1502wed}
\mathcal T_\mu: L^p(\partial \mathbb D) \to L^p(\partial \mathbb D) \iff  \frac{1}{1-\abs{\mu}}<p<\infty.
\end{equation}
Consequently, if $g\in L^p(\partial\mathbb D)$ in this range, then
\[
y \in L^p(\partial\mathbb D) \qquad\text{and}\qquad \tilde g=2i\opH y\in L^p(\partial\mathbb D).
\]
It is worth noting that the admissible range of $p$ for $\mathcal T_\mu$ is larger than that for the weighted Hilbert transform
\[
f\mapsto w \opH(f/w),
\]
whose $L^p$-boundedness is characterized by the condition $w^p\in A_p$, or equivalently,
\[
\abs{\mu} < \min\left\{\frac{1}{p}, 1-\frac{1}{p}\right\}.
\]
In particular, we used this criterion to conclude that $w^2 \in A_2$ whenever $\abs{\mu}<1/2$.
The larger range for $\mathcal T_\mu$ results from the restriction to $f=\mathsf{\Pi}_+ g$ together with the presence of the factor $\mathsf{s}$.
Indeed, after exploiting the $\opR$-symmetry and making the change of variables $t=\sin\theta$, as in \eqref{eq0854thu}, the problem reduces to the $L^p$-boundedness of a finite Hilbert transform with a Jacobi-type weight:
\[
F(t) \mapsto W(t) \,\frac{1}{\pi}\,\mathrm{p.v.}\! \int_{-1}^{1} \frac{F(s)}{(t-s)W(s)} \,ds,
\]
where
\[
W(t)=(1+t)^a(1-t)^b,\qquad a=\frac12\left(1+\mu-\frac1p\right),\qquad b=\frac12\left(1-\mu-\frac1p\right).
\]
The condition $W^p \in A_p((-1,1))$ is equivalent to $\abs{\mu} < 1- 1/p$; see \cite{HMW1973}.
Moreover, under the correspondence
\[
F(t)= f(\arcsin t) (1-t^2)^{-1/2p},
\]
we have $F \in L^p((-1,1))$ if and only if $f \in L^p((-\pi/2,\pi/2))$.
The assertion \eqref{eq1502wed} follows from these observations.

Consequently, if $g \in L^p(\partial \mathbb D)$ for some $p \in (p_k,\infty)$, where
\begin{equation}	\label{erq1740wed}
p_k:=\frac{\pi}{\pi-2 \arctan \abs{k}},
\end{equation}
then \eqref{eq0803wed} and \eqref{eq1455mon} imply that the function
\begin{equation}		\label{eq1553sun}
u(r,\cdot) = \mathsf{P_r} g -k[\mathsf{P_r}, \mathsf{M_s}] \tilde{g}
\end{equation}
satisfies
\[
\norm{\mathcal{N}_*u}_{L^p} \le C \norm{g}_{L^p}, \qquad  p_k <p<\infty,
\]
where $C=C(k,p,\beta)$ and $\beta$ is the aperture constant in \eqref{eq0856wed}.

The endpoint case $p=\infty$ follows by combining the weak maximum principle established in Section~\ref{sec6.2} with the positivity argument in Section~\ref{sec5.2}.
Together with a standard approximation argument, the maximum principle yields unique solvability of the Dirichlet problem for continuous boundary data and thereby defines the associated $L$-harmonic measure.
Accordingly, the kernel $\mathcal K_k^{(0)}$ is nonnegative and represents the density of this measure.

For fixed $p$, the condition $p>p_k$ is equivalent to
\[
\abs{k} <\cot(\pi/2p).
\]
This agrees with the range in Theorem~\ref{thm:small_k} when $1<p\le2$, but is strictly larger when $p>2$.

Similarly, the branch $\mathcal K^{(1)}_k$ is not confined to the regime $k>1$: under a suitable $L^p$-integrability assumption on $g$, it may be used for every $k>0$.
The analogous statement holds for the branch $\mathcal K_k^{(-1)}$ when $k<0$.
More precisely, for $-1<\mu<1$, $\mu\ne 0$, define
\[
\tilde{\mathcal T}_\mu g :=w \,\opH\left(\frac{\mathsf{s} \mathsf{\Pi}_+g}{w}\right),\qquad
w(\theta)=\left(\frac{1+\sin\theta}{1-\sin\theta}\right)^{\mu/2}.
\]
Then $\tilde{\mathcal T}_\mu$ is bounded on $L^p(\partial\mathbb D)$ if and only if $1<p<1/\abs{\mu}$.
This follows once again by exploiting the $\opR$-symmetry and applying the change of variables $t=\sin \theta$.
In this case, however, instead of \eqref{eq1225thu}, we use the identity
\[
\cot\left(\frac{\theta-\phi}{2}\right) + \tan\left(\frac{\theta+\phi}{2}\right) = \frac{2\cos\phi}{\sin\theta-\sin\phi},
\]
which leads to the Jacobi-type weight
\[
W(t)=(1+t)^a(1-t)^b,\qquad a=\frac12\left(\mu-\frac1p\right),\qquad b=\frac12\left(-\mu-\frac1p\right).
\]
For $1<p<\infty$, the condition $W^p \in A_p((-1,1))$ is equivalent to $\abs{\mu} < 1/p$.

Consequently, if $g \in L^p(\partial \mathbb D)$ for some $p \in (1,p_k)$,
then $y$, defined in \eqref{eq1344fri} with $\mu=\sgn k -\frac{2}{\pi} \arctan k$, belongs to $L^p(\partial \mathbb D)$.
Hence, if the branch $\mathcal K_k^{(1)}$ is used for $k>0$ or the branch $\mathcal K_k^{(-1)}$ for $k<0$, then the corresponding function $u$ in \eqref{eq1553sun} satisfies
\[
\norm{\mathcal{N}_*u}_{L^p} \le C \norm{g}_{L^p}, \qquad 1<p<p_k,
\]
where $C=C(k, p, \beta)$.
Together with \eqref{erq1740wed}, this identifies $\abs{k}=\cot(\pi/2p)$ as the branching threshold in the $L^p$ theory.
In particular, for $p=2$, this recovers the $L^2$ threshold $\abs{k}=1$.

Finally, we verify that the function $u$ defined by \eqref{eq1553sun} is a weak solution of $Lu=0$ in $\mathbb D$.
By the same argument as in Section~\ref{sec5.1}, we have $u\in H^1_{\mathrm{loc}}(\mathbb D)$, so it remains only to verify the transmission condition \eqref{eq1539tue}.
We do so directly using the explicit formulas for $y$.

We shall use the following form of Carleman's identity.
Suppose that
\[
f \in L^q,\quad h \in L^r,\qquad q,r>1,\qquad 1/q+1/r < 1.
\]
Then
\begin{equation}\label{eq1729fri} \mathsf H(f\mathsf Hh)+\mathsf H(h\mathsf Hf) =(\mathsf Hf)(\mathsf Hh)-fh+\hat f(0)\hat h(0).
\end{equation}

Recall that $\tilde g=2i\mathsf H y$ is $\mathsf R$-antisymmetric.
Using $\mathsf H\mathsf R=-\mathsf R\mathsf H$ and \eqref{eq1112sun}, we see that \eqref{eq1531wed} is equivalent to
\begin{equation}\label{eq1119sun}
\mathsf H( 2iy+2ik\mathsf s\mathsf H y-\mathsf{\Pi}_+g)=0.
\end{equation}
We first assume that $g$ is smooth.
For the branch $N=0$, we use \eqref{eq1910mon} together with
\[
\mathsf H w=k\mathsf s w,
\]
whereas for the branch $N=\sgn k$ with $k\neq 0$, we use \eqref{eq1344fri} together with
\[
\mathsf H w=-\frac1k\mathsf s w.
\]
Applying Carleman's identity \eqref{eq1729fri} in each case yields
\begin{equation}	\label{eq1327sun}
2iy+2ik\mathsf s\mathsf H y-\mathsf{\Pi}_+g=0,
\end{equation}
and hence \eqref{eq1119sun}.
The constant term in Carleman's identity vanishes because one of the relevant factors is $\mathsf R$-antisymmetric and therefore has zero average.
We next extend \eqref{eq1327sun} to every $g\in L^p(\partial\mathbb D)$ in the admissible range of $p$.
Indeed, smooth functions are dense in $L^p$, while the map $g\mapsto y$ and the operators $\opH$ and $\mathsf{\Pi}_+$ are bounded on $L^p$.
Thus, by approximation, \eqref{eq1327sun} continues to hold for general $L^p$ data.
It follows that the condition \eqref{eq1539tue} is satisfied.
Therefore $u$ is a weak solution of $Lu=0$ in $\mathbb D$.

\subsection{Sign-changing Poisson kernel}
As noted in the final paragraph of Section~\ref{sec6.2}, the maximum principle holds when the branch $\mathcal{K}^{(0)}_k$ is used in \eqref{poisson_kernel} and 
\[
g \in H^{1/2+}(\partial\mathbb{D}).
\]
This conclusion need not hold for the other branches.
We first verify directly that the constant boundary datum $g\equiv1$ gives the constant solution $u \equiv 1$ for the branch $\mathcal K^{(0)}_k$, and then compare this with the solutions arising from $\mathcal{K}^{(1)}_k$ and $\mathcal{K}^{(-1)}_k$.

\smallskip
Since $\mathsf{\Pi}_+g\equiv1$, formula \eqref{eq1910mon} for $y$ is valid for all $k\in\mathbb{R}$.
Thus
\[
y=-i\left\{\frac{1}{2(1+k^2)} - \frac{k \mathsf{s}}{2(1+k^2)}\, w\, \opH \left(\frac{1}{w}\right)\ \right\}.
\]
Using \eqref{eq1528sat}, we have
\[
\opH \left(\frac{1}{w}\right)= -\frac{k\mathsf{s}}{w},\qquad w(\theta)= \left(\frac{1+\sin \theta}{1-\sin\theta} \right)^{\mu/2},\qquad \mu=-\frac{2}{\pi}\arctan k.
\]
It follows that $y=-i/2$.
Consequently, \eqref{eq1825fri} gives $\tilde{g}=2i \opH y=0$, and hence \eqref{eq1121mon} yields $u\equiv 1$.
Thus, for the branch $\mathcal{K}^{(0)}_k$, the constant boundary datum $g \equiv 1$ produces the constant solution $u\equiv 1$, in accordance with the maximum principle.

By contrast, when $k \neq 0$, formula \eqref{eq1344fri} with $g \equiv 1$ gives
\begin{equation}		\label{eq2223mon}
y= -i \left\{\frac{1}{2(k^2+1)} - \frac{k}{2(k^2+1)} w\, \opH \left(\frac{\mathsf{s}}{w}\right) \right\}.
\end{equation}
Using \eqref{eq1528sat} together with the identity $\opH^2 f = -f +\hat{f}(0)$, we obtain
\[
\opH \left(\frac{1}{w}\right)= \frac{\mathsf{s}}{k w},\qquad 
\opH \left(\frac{\mathsf{s}}{w}\right)= \sgn k \sqrt{k^2+1}-\frac{k}{w}.
\]
Substituting the second identity into \eqref{eq2223mon}, we find
\[
y=-\frac{i}{2}\,\left(1- \frac{\abs{k} w}{\sqrt{k^2+1}}\right),\qquad w(\theta)=\left(\frac{1+\sin \theta}{1-\sin\theta} \right)^{\mu/2},\qquad \mu=\sgn k - \frac{2}{\pi}\arctan k.
\]
It then follows from \eqref{eq1528sat} that
\[
\tilde{g}=2i \opH y=- \frac{\abs{k}}{\sqrt{k^2+1}}\,\opH w= \frac{\sgn k }{\sqrt{k^2+1}}\,\mathsf{s} w.
\]
Therefore, by \eqref{eq2119tue} and \eqref{eq:uvw},
\begin{multline}		\label{eq2341mon}
u =1 - \frac{\abs{k}}{\sqrt{k^2+1}}\, \{ \mathcal{P}_r \ast  w -  \mathsf{s} \mathcal{P}_r \ast(\mathsf{s}w)\},\\
w(\theta)=\left(\frac{1+\sin \theta}{1-\sin\theta} \right)^{\mu/2},\qquad \mu:= \sgn k - \frac{2}{\pi}\arctan k.
\end{multline}

For $k\neq0$, the function $u$ in \eqref{eq2341mon} is a genuine solution that converges non-tangentially to $1$ a.e. on $\partial\mathbb{D}$, yet it is not identically equal to $1$.
Thus, both the constant solution $u \equiv 1$ and the solution \eqref{eq2341mon} are admissible.
More generally, a branch $\mathcal{K}^{(N)}_k$ may be used whenever
\[
-\frac{2}{\pi}\arctan k+N\in(-1,1),
\]
provided that the boundary datum $g$ has sufficient integrability.
The preceding examples make this phenomenon explicit and show that uniqueness fails in this broader class of solutions.

Consequently, the Poisson kernel is not unique in this enlarged class.
This non-uniqueness is intrinsic to the choice of branch in the Riemann--Hilbert factorization and is not merely a consequence of crossing the $L^2$ threshold $\abs{k}=1$.

Moreover, formula \eqref{eq2341mon} shows directly that the solution $u$ changes sign.
Indeed, when $k>0$, we have $0<\mu<1$, and
\[
u(r, \pi/2)=1-\left( \frac{1+r}{1-r}\right)^\mu<0,\qquad u(r, -\pi/2)=1-\left( \frac{1-r}{1+r}\right)^\mu>0,\qquad 0<r<1.
\]
When $k<0$, we have $-1<\mu<0$, and the two inequalities are reversed.
Thus, in either case, the solution corresponding to the positive boundary datum $g\equiv1$ takes both positive and negative values.
It follows from the Poisson representation that the kernels $\mathcal K_k^{(1)}$ and $\mathcal K_k^{(-1)}$ change sign.

The preceding argument establishes the sign change of the kernel associated with the canonical normalization, corresponding to  $C=0$ in the Riemann--Hilbert factorization.
We now show that the sign change persists under every bounded linear selection of the free parameter.

For an arbitrary scalar $C$, the corresponding function $\tilde g$ is given by
\[
\tilde g = 2i\opH y + 2C\frac{\sgn k}{\sqrt{k^2+1}}\mathsf{s}w,
\]
where $y$ is defined by \eqref{eq1344fri} and
\[
w(\theta)=\left(\frac{1+\sin\theta}{1-\sin\theta}\right)^{\mu/2}, \qquad \mu=\sgn k-\frac{2}{\pi}\arctan k.
\]
Here we have used \eqref{eq1528sat}.
Define
\begin{equation}\label{eq1330thu}
u_h := [\opP_r,\opM_{\mathsf s}](\mathsf{s}w)= \mathcal P_r\ast w - \mathsf{s}\mathcal P_r\ast(\mathsf{s}w).
\end{equation}
For a boundary datum $g$ and a scalar parameter $C$,  let $u_C(g)$ denote the corresponding solution.
With this notation,  $u_0(g)$ is the canonical solution represented by the kernel $\mathcal{K}^{(\sgn k)}$, and
\begin{equation}		\label{eq1253fri}
u_C(g)=u_0(g)-\frac{2\abs{k}}{\sqrt{k^2+1}}\,C u_h.
\end{equation}

The function $u_h$ is a weak solution of $Lu_h=0$ in $\mathbb D$.
Indeed, it is harmonic in each half-disk $\mathbb D_\pm$, and its traces across $\Gamma$ satisfy
\[
[u_h]_\Gamma=0, \qquad (\partial_x+k\mathsf{s}\partial_y)u_h=0 \quad\text{on }\;\Gamma.
\]
Moreover, $u_h$ has non-tangential limit zero at every point of $\partial\mathbb D$ except $\theta=\pm\pi/2$, and
\[
\mathcal N_*u_h\in L^p(\partial\mathbb D) \qquad\text{whenever}\qquad \abs{k}>\cot(\pi/2p).
\]

Suppose now that $C=C(g)$ is chosen so that $g\mapsto u_C(g)$ is linear and satisfies the natural $L^p$ non-tangential estimate.
Since both the canonical and the selected solution operators are bounded and $u_h \not \equiv0$, \eqref{eq1253fri} shows that $C$ is a bounded linear functional on $L^p(\partial\mathbb D;\mathbb R)$.
Hence, by $L^p$-duality,
\[
C(g)=\ip{g,\lambda}:= \frac1{2\pi}\int_{-\pi}^{\pi}g(\phi)\lambda(\phi)\,d\phi
\]
for some $\lambda\in L^{p'}(\partial\mathbb D;\mathbb R)$.
Consequently,
\[
u_C(r,\theta)= \frac{1}{2\pi} \int_{-\pi}^{\pi} \left\{ \mathcal K_k^{(\sgn k)}(r,\theta,\phi)- \frac{2\abs{k}}{\sqrt{k^2+1}}
u_h(r,\theta)\lambda(\phi) \right\}g(\phi)\,d\phi.
\]
The corresponding kernel is therefore
\[
\mathcal K^{(\sgn k)}_k(r,\theta, \phi) -\frac{2\abs{k}}{\sqrt{k^2+1}} u_h(r, \theta)\lambda(\phi),\qquad \lambda \in L^{p'}(\partial \mathbb D; \mathbb R).
\]
When $\abs{k}>\cot(\pi/2p)$, this modified kernel defines a solution operator satisfying the non-tangential maximal estimate.
Conversely, when $\abs{k}>\cot(\pi/2p)$, every $\lambda\in L^{p'}(\partial\mathbb D;\mathbb R)$ defines a bounded linear selection through this formula.
We now show that every such modified kernel must change sign.

We consider the case $k>0$; the case $k<0$ is similar.
We claim that no choice of
$\lambda\in L^{p'}(\partial\mathbb D;\mathbb R)$ can make the modified kernel nonnegative when
\[
k>\cot(\pi/2p) \iff \mu <1/p \iff 1-\mu>1/p'.
\]

An explicit computation gives
\[
u_h(r,\pi/2) = \frac{\sqrt{k^2+1}}{k} \left(\frac{1+r}{1-r}\right)^\mu.
\]
On the other hand, for $\phi\neq\pm\pi/2$, a direct calculation yields
\[
\mathcal K_k^{(1)}(r,\pi/2,\phi) =
-\frac{1}{\sqrt{k^2+1}} \left(\frac{1+r}{1-r}\right)^\mu
\left(\frac{1+\sin\phi}{1-\sin\phi} \right)^{(1-\mu)/2} \frac{2r(1-\sin\phi)}{1-2r\sin\phi+r^2}.
\]
Consequently,
\[
\lim_{r\nearrow 1} \frac{\mathcal K_k^{(1)}(r,\pi/2,\phi)}{u_h(r,\pi/2)} =-\frac{k}{k^2+1}
\left(\frac{1+\sin\phi}{1-\sin\phi}\right)^{(1-\mu)/2}.
\]
Suppose, to the contrary, that the modified kernel is nonnegative for every $0<r<1$ and $\theta\in\partial\mathbb D$, and for almost every $\phi$.
Since $u_h(r, \pi/2)>0$, evaluating at $\theta=\pi/2$, dividing by $u_h(r,\pi/2)$ and using the preceding limit, we obtain
\[
-\lambda(\phi) \ge b(\phi) :=
\frac{1}{2\sqrt{k^2+1}} \left(\frac{1+\sin\phi}{1-\sin\phi}\right)^{(1-\mu)/2}
\]
for almost every $\phi$.
The function $b$ is nonnegative and satisfies
\[
b(\phi) \asymp \abs{\phi-\pi/2}^{\mu-1} \qquad\text{as }\phi\to \pi/2.
\]
Since $1-\mu>1/p'$, it follows that $b\notin L^{p'}(\partial\mathbb D)$.
However, $-\lambda\ge b$ implies $\abs{\lambda}\ge b$, contradicting $\lambda\in L^{p'}(\partial\mathbb D)$.
Thus no modified kernel associated with $\lambda\in L^{p'}$ can be nonnegative when $k>\cot(\pi/2p)$.

Finally, choose $\phi\neq\pm\pi/2$ such that $\lambda(\phi)$ is finite. Then
\[
\mathcal K_k^{(1)}(r,\phi,\phi) = \mathcal P_r(0)+O(1) = \frac{1+r}{1-r}+O(1) \qquad\text{as }\;r\nearrow1,
\]
whereas $u_h(r,\phi)\to 0$.
Hence the modified kernel is positive at $(r,\phi,\phi)$ for $r$ sufficiently close to $1$.
Since it cannot be nonnegative everywhere, it must take both positive and negative values.

\subsection{Summary}
We conclude this section by comparing the admissible Riemann--Hilbert indices in the unit disk with those in Axelsson's half-plane model.
In the disk, local integrability at the two singular boundary points requires
\[
-1<\mu_N<1,\qquad \mu_N=-\frac{2}{\pi}\arctan k+N.
\]
In Axelsson's half-plane model, the corresponding exponent $\alpha$ is determined by
\[
-2<\alpha<1,\qquad \tan(\pi\alpha/2)=k.
\]
Thus, for $k \neq 0$, the two admissible branches differ by an index shift of two in the half-plane model, whereas they differ by one in the unit disk.
This difference shows that the unit disk model is not merely a bounded-domain analogue of the half-plane model, but has a genuinely different admissible branch structure.

\medskip
We summarize the preceding discussion in the following theorem.
\begin{theorem}	\label{thm:kernel}
Let $k\in\mathbb{R}$, and define
\[
\mu_N=-\frac{2}{\pi}\arctan k+N,
\qquad
p_k:=\frac{\pi}{\pi-2\arctan \,\abs{k}}.
\]
For each $N\in\mathbb Z$ such that $\mu_N\in(-1,1)$, there exists a real-valued Poisson kernel
\[
\mathcal K^{(N)}_k=\mathcal K_k^{(N)}(r, \theta,\phi)
\]
associated with the Dirichlet problem \eqref{eq:dp_in_disk}.
Let $g\in L^p(\partial\mathbb D)$, where
\[
p_k<p\le\infty \quad\text{if }\;N=0,
\qquad 1<p<p_k \quad\text{if }\;k\neq0\;\text{ and }\;N=\sgn k.
\]
Define
\[
u=\mathsf{P_r} g-k[\mathsf{P_r},\mathsf{M_s}] \tilde{g}, \qquad \tilde{g}=2i\opH y,
\]
where $y$ is given by \eqref{eq1910mon} when $N=0$, and by \eqref{eq1344fri} when $N=\pm1$.
Then $u$ solves \eqref{eq:dp_in_disk}, admits the representation
\[
u(r,\theta)= \frac{1}{2\pi}\int_{\partial\mathbb D} \mathcal K_k^{(N)}(r, \theta,\phi)g(\phi)\,d\phi,
\]
and satisfies
\[
\norm{\mathcal N_*u}_{L^p(\partial\mathbb D)} \le C_{k,p,\beta} \,\norm{g}_{L^p(\partial\mathbb D)},
\]
where $\beta$ is the aperture constant.
For each $1<p<\infty$, the following dichotomy holds.
\begin{enumerate}
\item
If $\abs{k}<\cot(\pi/2p)$, equivalently $p>p_k$, then the admissible $L^p$ branch is $N=0$, and the $L^p$ Dirichlet problem is uniquely solvable.
Moreover, $\mathcal K_k^{(0)}$ is nonnegative, and the corresponding solution satisfies the weak maximum principle for boundary data in $H^{1/2+}(\partial\mathbb D)$.

\item
If $\abs{k}>\cot(\pi/2p)$, equivalently $p<p_k$, then the admissible branch is $N=\sgn k$.
The homogeneous solution $u_h$ defined in \eqref{eq1330thu} shows that the $L^p$ Dirichlet problem is not uniquely solvable.
The kernel $\mathcal K_k^{(\sgn k)}$ defines the canonical solution operator.
Every solution differs from the canonical solution by a scalar multiple of $u_h$, determined by the free Riemann--Hilbert parameter $C$.
A solution selection depends linearly and boundedly on $g$ precisely when
\[
C=\ip{g,\lambda},
\qquad
\lambda\in L^{p'}(\partial\mathbb D;\mathbb R).
\]
Its kernel is then
\[
\mathcal K_k^{(\sgn k)}(r,\theta,\phi)- \frac{2\abs{k}}{\sqrt{k^2+1}}\, u_h(r,\theta)\lambda(\phi),
\]
and every such kernel changes sign.
\end{enumerate}
\end{theorem}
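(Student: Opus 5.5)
Theorem~\ref{thm:kernel} is a summary, so the proof assembles the results of Sections~\ref{sec:weak_sol}--\ref{sec:RH}. I would proceed in four steps.

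\textbf{Step 1: existence and representation.} For $N$ with $\mu_N\in(-1,1)$, I take $y$ from \eqref{eq1910mon} for $N=0$ and from \eqref{eq1344fri} (with $C=0$) for $N=\sgn k$. The key input is the $L^p$ mapping property of the relevant weighted Hilbert transform. For $N=0$ this is \eqref{eq1502wed}: $\mathcal T_\mu$ is bounded iff $p>1/(1-\abs{\mu})$. Since $\abs{\mu_0}=\frac2\pi\arctan\abs{k}$, this condition reads $p>p_k$. For $N=\sgn k$ one uses $\tilde{\mathcal T}_\mu$, bounded iff $p<1/\abs{\mu}$. Here $\abs{\mu_{\sgn k}}=1-\frac2\pi\arctan\abs{k}=1/p_k$, so the condition is $p<p_k$.

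Both reduce, via $\opR$-symmetry and $t=\sin\theta$, to the $A_p$ condition for the Jacobi weights. So $g\mapsto y\mapsto\tilde g=2i\opH y$ is bounded on $L^p$. Then \eqref{eq0839wed} and \eqref{eq1455mon} give the $\mathcal N_*$ bound, and \eqref{eq0841wed} gives non-tangential convergence. The weak-solution property follows from the $H^1_{\rm loc}$ argument of Section~\ref{sec5.1} together with the Carleman-identity verification of \eqref{eq1327sun}, extended by density. Since $g\mapsto u(r,\theta)$ is a bounded linear functional on real $L^p$, duality produces $\mathcal K^{(N)}_k(r,\theta,\cdot)\in L^{p'}$, and real-valuedness is inherited from $y\in iL^2_+$. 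The case $p=\infty$ for $N=0$ follows from the maximum-principle argument.

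\textbf{Step 2: part (a).} Nonnegativity of $\mathcal K_k^{(0)}$ follows from Lemma~\ref{lem_sobolev} and Theorem~\ref{thm1}(e). For $H^{1/2+}$ data one has $u\in H^1$ and the weak maximum principle holds. Approximation in $C(\partial\mathbb D)$ then identifies $\mathcal K^{(0)}_k$ with the density of $L$-harmonic measure.

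Uniqueness in $L^p$ for $p>p_k$ is the step I expect to be the \textbf{main obstacle}, since the paper so far has only exhibited solutions. My plan is to show that any $L^p$ solution with $\mathcal N_*u\in L^p$ arises from the Dirac-system construction, with the free parameter $C$ multiplying $u_h$. Then $u_h$ would be the only candidate for a homogeneous solution. I would show $\mathcal N_*u_h\notin L^p$ for $p>p_k$: near $\theta=\pi/2$ we have $u_h(r,\pi/2)\asymp(1-r)^{-\mu}$, so $\mathcal N_*u_h\gtrsim\abs{\phi-\pi/2}^{-\mu}$ with $\mu=1/p_k>1/p$.

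Making "every solution arises this way" rigorous is delicate. I would try a duality argument with the adjoint operator $\mathbf A^\top$, which corresponds to $k\mapsto -k$. The idea is to use its $L^{p'}$-solvable regime (where $p'<p_k$) to construct Green-type test functions. Failing that, I would argue through the first-order Hardy-space characterization in which the boundary equation \eqref{eq0858mon} captures all solutions.

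\textbf{Step 3: part (b), nonuniqueness and selections.} I take $u_h$ from \eqref{eq1330thu}. It is harmonic in $\mathbb D_\pm$ and continuous across $\Gamma$ by Lemma~\ref{lem2117mon}, since $\mathsf s w$ is $\opR$-antisymmetric. The transmission condition comes from $\opH w=-\frac1k\mathsf s w$ via Lemma~\ref{lem1431tue}. Its boundary limit is $0$ away from $\pm\pi/2$, and $\mathcal N_*u_h\lesssim\abs{\phi\mp\pi/2}^{-\abs{\mu}}\in L^p$ when $p<p_k$.

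Given this, every solution has the form \eqref{eq1253fri}. Linearity and boundedness of the selection force $C(g)=\ip{g,\lambda}$ with $\lambda\in L^{p'}$, which yields the stated kernel.

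\textbf{Step 4: sign change.} I would reproduce the argument of the preceding subsection. The explicit asymptotics of $\mathcal K^{(1)}_k(r,\pi/2,\phi)/u_h(r,\pi/2)$ force $-\lambda\ge b$ with $b\notin L^{p'}$, because $1-\mu>1/p'$. This contradicts $\lambda\in L^{p'}$. Separately, positivity near the diagonal, from the $\mathcal P_r(0)$ blow-up while $u_h\to0$, shows the kernel is positive somewhere. Together these give a genuine sign change. The case $k<0$ is symmetric, evaluating at $\theta=-\pi/2$ instead.
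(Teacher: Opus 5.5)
Your Steps 1, 3 and 4 follow the paper's own assembly closely: the Jacobi-weight $A_p$ criteria for $\mathcal T_\mu$ and $\tilde{\mathcal T}_\mu$, Carleman's identity plus density for the transmission condition, the free parameter $C$ with $L^{p'}$-duality, and the contradiction $-\lambda\ge b\notin L^{p'}$ together with positivity near the diagonal. The genuine gap is the uniqueness claim in part (a). You leave it open, and neither of your suggested routes closes it.

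\emph{Classifying all solutions through the Dirac system.} Nothing in the paper supports this. The Hardy decomposition and the boundary equation \eqref{eq0858mon} only describe solutions generated by $L^2$ Cauchy data. For an arbitrary weak solution with merely $\mathcal N_*u\in L^p$, you would have to show three things:
\begin{itemize}
\item it is produced by that construction at all;
\item the resulting Riemann--Hilbert problem is then posed in the weighted class dictated by $\mathcal N_*u\in L^p$;
\item the index leaves no free constant in that class.
\end{itemize}
Checking $\mathcal N_*u_h\notin L^p$ is only the last line of such an argument.

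\emph{Duality with the adjoint.} This misidentifies the regime. $\mathbf A^\top$ corresponds to $k\mapsto-k$, which leaves $p_k$ unchanged, so $p'<p_k$ is exactly where the adjoint problem is \emph{not} uniquely solvable. Moreover, $p>p_k$ does not imply $p'<p_k$: take $\abs{k}<1$ and $p_k<p<2$, so that $p'>2>p_k$.

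The same unproved classification is used silently in your Step 3, where you write ``every solution has the form \eqref{eq1253fri}''. The paper establishes this only within the Riemann--Hilbert family parametrized by $C$, and that is how part (b) should be read.

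The missing idea is the one the paper relies on, and you already have its first half in Step 2. The maximum principle identifies $\mathcal K_k^{(0)}(r,\theta,\cdot)$ with the density $k^X=d\omega^X/d\sigma$ of $L$-harmonic measure at $X=re^{i\theta}$. By duality, the bound $\norm{\mathcal N_*u}_{L^p}\le C\norm{g}_{L^p}$ then gives $k^X\in L^{p'}(\partial\mathbb D)$.

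Uniqueness follows from the classical Green-function argument. It needs no description of the solution set and is valid for real non-symmetric coefficients. Suppose $\mathcal N_*u\in L^p$ and $u\to0$ non-tangentially a.e.
\begin{itemize}
\item Let $\eta_\varepsilon$ be a cutoff equal to $1$ on $\{\abs{Y}<1-2\varepsilon\}$ and supported in $\{\abs{Y}<1-\varepsilon\}$. Represent $\eta_\varepsilon u$ through the Gr\"uter--Widman Green function $G(X,\cdot)$ of $L$.
\item Since $Lu=0$, only the terms containing $u\,\nabla G\cdot\nabla\eta_\varepsilon$ and $G\,\nabla u\cdot\nabla\eta_\varepsilon$ survive, with $\mathbf A$ or $\mathbf A^\top$ placed appropriately. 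Both live in a boundary strip of width $\varepsilon$.
\item Caccioppoli on Whitney balls, for $u$ and for the adjoint solution $G(X,\cdot)$, combined with the comparison estimate $G(X,Y)\lesssim\omega^X(\Delta_Y)$, gives $\abs{u(X)}\lesssim\int_{\partial\mathbb D}\mathcal N_*^{(\varepsilon)}u\,k^X\,d\sigma$. Here $\mathcal N_*^{(\varepsilon)}$ is the non-tangential maximal function truncated to height $\lesssim\varepsilon$, with the usual harmless change of aperture.
\item Since $\mathcal N_*^{(\varepsilon)}u\le\mathcal N_*u\in L^p$, $k^X\in L^{p'}$, and $\mathcal N_*^{(\varepsilon)}u\to0$ a.e., dominated convergence yields $u(X)=0$.
\end{itemize}
This is the standard fact behind the paper's assertion that part (a) gives unique solvability once $\mathcal K_k^{(0)}$ is identified with the density of $L$-harmonic measure.
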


\noindent\textbf{Acknowledgments.}
The author was supported by the National Research Foundation of Korea (NRF) under agreement NRF-2022R1A2C1003322.


\end{document}